\let\oldenumerate=\enumerate
	\def\enumerate{
	\oldenumerate
	\setlength{\itemsep}{5pt}
	}
\let\olditemize=\itemize
	\def\itemize{
	\olditemize
	\setlength{\itemsep}{5pt}
	}
\tikzset{global scale/.style={
    scale=#1,
    every node/.style={scale=#1}
  }
}
\renewcommand{\Im}{\operatorname{Im}}
\newcommand{\T}{\mathbb{T}}
\newcommand{\C}{\mathbb{C}}
\newcommand{\N}{\mathbb{N}}
\newcommand{\Z}{\mathbb{Z}}
\newcommand{\R}{\mathbb{R}}
\newcommand{\D}{\mathbb{D}}
\newcommand{\Aut}{\operatorname{Aut}}
\renewcommand{\phi}{\varphi}
\renewcommand{\leq}{\leqslant}
\renewcommand{\geq}{\geqslant}
\numberwithin{equation}{section}
\theoremstyle{plain}
\newtheorem{Corollary}[equation]{Corollary}
\newtheorem*{Corollary*}{Corollary}
\newtheorem{Theorem}[equation]{Theorem}
\newtheorem*{Theorem*}{Theorem}
\newtheorem{Lemma}[equation]{Lemma}
\theoremstyle{definition}
\begin{document}
\bibliographystyle{amsplain}

\title{Finite Blaschke products: a survey}

\author{Stephan Ramon Garcia}
\address{   Department of Mathematics\\
Pomona College\\
Claremont, California\\
91711 \\ USA}
\email{Stephan.Garcia@pomona.edu}
%\urladdr{\url{http://pages.pomona.edu/~sg064747/}}
\thanks{First author partially supported by National Science Foundation Grant DMS-1265973.}

\author{Javad Mashreghi}
\address{D\'epartament de Mathematiques et de Statistique, Universit\'e Laval, Qu\'ebec, QC, G1K 7P4, Canada}
\email{javad.mashreghi@mat.ulaval.ca}
% \urladdr{http://facultystaff.richmond.edu/~wross}
\thanks{Second author partially supported by NSREC}

\author{William T. Ross}
\address{   Department of Mathematics and Computer Science\\
University of Richmond\\
Richmond, Virginia\\
23173 \\ USA}
\email{wross@richmond.edu}
%\urladdr{http://facultystaff.richmond.edu/~wross}

\begin{abstract}
A finite Blaschke product is a product of finitely many automorphisms of the unit disk.
This brief survey covers some of the main topics in the area, including
characterizations of Blaschke products, approximation theorems, derivatives and residues of Blaschke products,
geometric localization of zeros, and selected other topics.
\end{abstract}

\maketitle

\section{Introduction}

A \emph{Blaschke product} is a function of the form
\begin{equation}\label{eq:Bdef}
    B(z) = e^{i\alpha} z^K \prod_{n \geq 1} \frac{|z_{n}|}{z_n} \frac{z_n-z}{1-\overline{z_n} z},
\end{equation}
in which $\alpha \in \R$, $K \in \N_0$, and $\{z_1,z_2,\ldots\}$ is a sequence (finite or infinite) in $\{0 < |z| <1\}$ that 
satisfies the \emph{Blaschke condition}
\begin{equation*}
    \sum_{n \geq 1} (1 - |z_n|) < \infty.
\end{equation*}
This condition ensures that the infinite product \eqref{eq:Bdef} converges uniformly on compact sets of the open unit disk $\D = \{|z| < 1\}$ and thus $B$ defines an 
analytic function on $\D$.  In fact, 
$|B(z)| \leq 1$ on $\D$ and
\begin{equation*}
    \lim_{r \to 1^{-}} B(r \xi)
\end{equation*}
exists and is unimodular for almost every $\xi$ on the unit circle $\T$.  The zeros of $B$ 
are precisely $0$ (if $K \in \N$) along with $z_1,z_2,\ldots$, listed according to multiplicity.
Blaschke products are the zero divisors for many spaces of analytic 
functions on $\D$, the Hardy spaces being the primary examples.  
This subject dates back to the early twentieth century \cite{wB15, MR0012127} and there is an extensive literature devoted to the topic. 

We are concerned here with \emph{finite} Blaschke products
\begin{equation*}
    B(z) = e^{i\alpha} z^K \prod_{k =1}^n \frac{|z_{k}|}{z_k} \frac{z_k-z}{1-\overline{z_k} z},
\end{equation*}
in which $\alpha \in \R$, $K \in \N_0$, and $\{z_1,z_2,\ldots,z_n\}$ is a finite set in $\D \backslash \{0\}$.
Each finite Blaschke product is analytic on a neighborhood of the closed unit disk 
$\D^-$ and is meromorphic on the extended complex plane
$\widehat{\C}:= \C \cup \{\infty\}$.  Since they are products of automorphisms of $\D$, finite Blaschke products enjoy many fascinating
properties.

This brief survey covers some of the main topics in the area, including
characterizations of finite Blaschke products, approximation theorems, derivatives and residues of finite Blaschke products,
geometric localization of zeros, and selected other topics.
We will be selective about which topics and proofs we include. Proofs that enlighten the reader may be included 
while more technical proofs will be omitted.

\section*{Acknowledgement} The authors would like to thank the referee for some useful comments and corrections. 

\section{Automorphisms and Blaschke factors}\label{Section:Factors}

A bijective analytic function $\phi: \D \to \D$ is called an \emph{automorphism} of $\D$.
One can show that $\Aut \D$, the set of all automorphisms of $\D$, is a group under function composition.  Its identity element is
$\operatorname{id}(z) = z$.  For $w \in \D$ and $\gamma \in \T$, let
\begin{equation*}
    \tau_{w} (z) =   \frac{w-z}{1-\overline{w} z} \quad\text{and}\quad
    \rho_{\gamma} (z) =   \gamma z.
\end{equation*}
The following identification of $\Aut \D$ is a consequence of the Schwarz Lemma.

\begin{Theorem} \label{T:automorph-disc}
    If $\phi \in \Aut \D$, then there are unique $w \in \D$ and $\gamma \in \T$ so that $\phi=\rho_\gamma \circ \tau_w$. Thus,
    \begin{equation*}
        \Aut \D  = \{ \rho_\gamma \circ \tau_w : \text{$w \in \D$ and $\gamma \in \T$}\}.
    \end{equation*}
\end{Theorem}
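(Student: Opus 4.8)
The plan is to reduce an arbitrary automorphism to one that fixes the origin, where the Schwarz Lemma applies directly. First I would record the basic properties of the two families of building blocks: each $\tau_w$ is an analytic bijection of $\D$ onto itself with $\tau_w(w)=0$ and $\tau_w(0)=w$, and a short computation shows $\tau_w \circ \tau_w = \operatorname{id}$, so $\tau_w$ is an involution belonging to $\Aut \D$; likewise each $\rho_\gamma \in \Aut \D$. Consequently every map of the form $\rho_\gamma \circ \tau_w$ is genuinely an automorphism, which already gives the inclusion $\supseteq$.

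For the reverse inclusion, given $\phi \in \Aut \D$, set $w = \phi^{-1}(0) \in \D$ and consider $\psi := \phi \circ \tau_w$. Since $\tau_w(0) = w$ and $\phi(w) = 0$, the composition $\psi$ is an automorphism of $\D$ fixing the origin, and it suffices to show that any such $\psi$ is a rotation. I would apply the Schwarz Lemma to $\psi$ to obtain $|\psi(z)| \leq |z|$ on $\D$, and then apply it again to $\psi^{-1}$ (which also fixes $0$) to obtain $|\psi^{-1}(z)| \leq |z|$; substituting $z = \psi(u)$ yields $|u| \leq |\psi(u)|$. Combining the two inequalities forces $|\psi(z)| = |z|$ throughout $\D$, so the equality case of the Schwarz Lemma gives $\psi = \rho_\gamma$ for some $\gamma \in \T$. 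Then $\phi = \rho_\gamma \circ \tau_w^{-1} = \rho_\gamma \circ \tau_w$, where the last equality uses the involution property $\tau_w^{-1} = \tau_w$.

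The main obstacle is precisely the equality case of the Schwarz Lemma: one must justify that attaining $|\psi(z)| = |z|$ at even a single interior point forces $\psi$ to be a rotation, which is where the strong form of the lemma — via the maximum principle applied to $\psi(z)/z$ — is essential rather than the mere estimate $|\psi(z)| \leq |z|$. Finally, for uniqueness I would observe that the identity $\phi = \rho_\gamma \circ \tau_w$ forces $\phi^{-1}(0) = w$, since $\rho_\gamma$ fixes $0$ and $\tau_w(w) = 0$; this pins down $w$ as $\phi^{-1}(0)$, and then $\rho_\gamma = \phi \circ \tau_w$ determines $\gamma \in \T$ uniquely as well, completing the proof.
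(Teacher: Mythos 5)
Your argument is correct and complete: the reduction to an origin-fixing automorphism via the involution $\tau_w$, the two-sided application of the Schwarz Lemma (including its equality case) to conclude $\psi=\rho_\gamma$, and the uniqueness observation $w=\phi^{-1}(0)$ are all sound. The paper itself omits the proof, remarking only that the theorem ``is a consequence of the Schwarz Lemma,'' and your write-up is exactly the standard argument that remark points to, so there is nothing to reconcile.
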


For $z_0 \in \D$, the function
\begin{equation*}
    b_{z_0}(z) = 
    \begin{cases}
        \dfrac{|z_0|}{z_0} \cdot \dfrac{ z_0 - z}{1 - \overline{z_0}z} & \text{if $z_0 \neq 0$},\\[10pt]
        z & \text{if $z_0 = 0$},
    \end{cases}
\end{equation*}
is called a \emph{Blaschke factor}.  These are the automorphisms of $\D$ that are normalized so
that they are nonnegative at the origin:
\begin{equation*}
    b_{z_0} (0) = |z_{0}| \geq 0.
\end{equation*}
In particular, $|b_{z_0}(z)| = 1$ for $z \in \T$ and
\begin{equation*}
b_{z_0}(z) = 
\begin{cases}
    \dfrac{|z_{0}|}{z_0} \cdot\dfrac{z_0-z}{1-\overline{z_0} z} & \text{if $|z| < 1$}, \\[10pt]
      |z_0| & \text{if $z = 0$},\\[10pt]
    \dfrac{1}{\,\overline{b_{z_0}(1/\overline{z})}\,} & \text{if $1 < |z| \leq \infty$},
\end{cases}
\end{equation*}
so $b_{z_0}$ bijectively maps $\D$ to $\D$, $\T$ to $\T$, and $\C\backslash \D$ to $\C \backslash \D$,
respectively.

It is often useful to evaluate the modulus or argument of a Blaschke factor.
For the modulus there is a simple expression:
\begin{equation}\label{E:modulusbz}
    |b_{z_0} (z)|^2 = 1- \frac{(1-|z|^2) 
    (1-|z_0|^2)}{|1-\overline{z_0} z|^2},
\end{equation}
or, equivalently,
\begin{equation}\label{eq:Modb_0Thing}
    \frac{1-|b_{z_0}(z)|^2}{1-|z|^2} = \frac{1-|z_0|^2}{|1-\overline{z_0}  z|^2}.
\end{equation}
The argument requires a little more work.
If $z_0= r_0e^{i\theta_0}$ and $0 < r_0 < 1$, then for $e^{i\theta} \in \T$ we have
\begin{equation*}
    b_{z_0} (e^{i\theta}) 
%    =e^{-i\theta_0}  \frac{r_0 e^{i\theta_0} - e^{i\theta}}{1-r_0 
%    e^{-i\theta_0} e^{i\theta}}
    = -e^{i(\theta-\theta_0)} \frac{1-r_0
    e^{-i(\theta-\theta_0)}}{1-r_0 e^{i(\theta-\theta_0)}}.
\end{equation*}
Using this identity, one can derive the following.

\begin{Theorem} \label{L:angleont}
    Let $z_0= r_0e^{i\theta_0}$, in which $0 < r_0 < 1$.  Write
    \begin{equation*}
        b_{z_0}(e^{i\theta}) = e^{i \arg b_{z_0}(e^{i\theta}) },
    \end{equation*}
    in which $-\pi \leq \arg b_{z_0}(e^{i\theta}) < \pi$. Then 
    \begin{equation*}
    \arg b_{z_0}(e^{i\theta})
    =
    \begin{cases}
    -\pi & \text{if $\theta = \theta_0$},\\
    -2  \arctan \left(\frac{(1-r_0)}{(1+r_0) \tan(
        \frac{\theta-\theta_0}{2})}\right)
        & \text{if $\theta \in (\theta_0, \theta_0 + 2 \pi)$}.
    \end{cases}
    \end{equation*}
    Here $\arctan$ denotes the principal branch of the arctangent function, the range of which is $(-\pi/2,\pi/2)$.
\end{Theorem}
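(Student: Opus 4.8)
The plan is to reduce the boundary values to a single complex number whose argument can be read off directly, exploiting the fact that a ratio of a number to its conjugate becomes a perfect square once the denominator is cleared. Writing $\psi := \theta - \theta_0$, I would first simplify the given boundary identity. Since $-e^{i\psi}(1 - r_0 e^{-i\psi}) = r_0 - e^{i\psi}$, the identity collapses to $b_{z_0}(e^{i\theta}) = (r_0 - e^{i\psi})/(1 - r_0 e^{i\psi})$. The key algebraic move is then to factor $e^{i\psi/2}$ out of both numerator and denominator: setting $u := (1-r_0)\cos(\psi/2)$ and $v := (1+r_0)\sin(\psi/2)$, a short computation gives $r_0 - e^{i\psi} = e^{i\psi/2}\bigl(-(u + iv)\bigr)$ and $1 - r_0 e^{i\psi} = e^{i\psi/2}(u - iv)$, so the two $e^{i\psi/2}$ factors cancel and $b_{z_0}(e^{i\theta}) = -(u+iv)/(u-iv)$.

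Next I would multiply through by the conjugate of the denominator to expose the perfect square. Because $(u-iv)(u+iv) = u^2 + v^2$ and $-(u+iv)^2 = (v - iu)^2$, this yields $b_{z_0}(e^{i\theta}) = (v - iu)^2/(u^2 + v^2)$. As $u^2 + v^2 > 0$, the argument of $b_{z_0}(e^{i\theta})$ equals $\arg\bigl((v-iu)^2\bigr) = 2\arg(v - iu)$ modulo $2\pi$. This is exactly where the hypothesis $\theta \in (\theta_0, \theta_0 + 2\pi)$ is used: then $\psi/2 \in (0,\pi)$, so $v = (1+r_0)\sin(\psi/2) > 0$, the real part of $v - iu$ is positive, and therefore $\arg(v - iu) = -\arctan(u/v) \in (-\pi/2, \pi/2)$. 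Finally, $u/v = (1-r_0)/\bigl((1+r_0)\tan(\psi/2)\bigr)$ delivers the claimed formula $\arg b_{z_0}(e^{i\theta}) = -2\arctan\bigl((1-r_0)/((1+r_0)\tan(\psi/2))\bigr)$.

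The step I expect to be the main obstacle is the branch bookkeeping for the argument, since the identity $\arg(\zeta^2) = 2\arg\zeta$ is valid on the principal branch only when $2\arg\zeta$ remains in $(-\pi,\pi]$. The payoff of establishing $v > 0$ is precisely that it forces $\arg(v - iu) \in (-\pi/2,\pi/2)$, hence $2\arg(v-iu) \in (-\pi,\pi)$, which lands inside the prescribed range $[-\pi,\pi)$ with no $2\pi$ correction; I would make this verification explicit rather than leave it implicit. The endpoint $\theta = \theta_0$ (that is, $\psi = 0$) has to be handled separately: there $b_{z_0}(e^{i\theta_0}) = (r_0 - 1)/(1 - r_0) = -1$, whose argument is $-\pi$ under the stated convention, and this agrees with the one-sided limit of the arctangent expression as $\psi \to 0^+$, where $\tan(\psi/2) \to 0^+$ forces $-2\arctan(\cdots) \to -\pi$. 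I would also remark that at $\psi = \pi$ the quotient inside the arctangent is read through $\cot(\psi/2) = 0$, consistent with $b_{z_0}(e^{i(\theta_0+\pi)}) = 1$ and argument $0$.
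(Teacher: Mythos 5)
Your proof is correct, and it proceeds from exactly the boundary identity $b_{z_0}(e^{i\theta}) = -e^{i(\theta-\theta_0)}\frac{1-r_0e^{-i(\theta-\theta_0)}}{1-r_0e^{i(\theta-\theta_0)}}$ that the paper displays immediately before the theorem as the intended starting point (the paper itself omits the derivation). The half-angle factorization, the verification that $v=(1+r_0)\sin\bigl(\tfrac{\theta-\theta_0}{2}\bigr)>0$ keeps $2\arg(v-iu)$ inside $(-\pi,\pi)$ with no branch correction, and the separate treatment of $\theta=\theta_0$ are all sound and complete the argument.
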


To compute the argument of a Blaschke factor at points inside $\D$, the situation is more delicate.  
Write
\begin{equation*}
    \frac{b_{z_0}(re^{i\theta})}{|b_{z_0}(re^{i\theta})|} = e^{i\arg b_{z_0} (re^{i\theta})},
\end{equation*}
and observe that the behavior of the function $\theta \mapsto \arg b_{z_0} (re^{i\theta})$ depends 
heavily on $r$.  If $r<|z_0|$, then $b_{z_0}$ has no zeros in the disk $|z| < r$ and hence 
$\arg b_{z_0} (re^{i\theta})$ is a continuous $2\pi$-periodic function on $\R$ (in fact, it is $C^{\infty}$).
If $r > |z_0|$, then the argument principle ensures that $\arg b_{z_0} (re^{i\theta})$ has a jump discontinuity
on any interval of length greater than $2\pi$.   A precise formula for the argument must take this into account.

\begin{Theorem} \label{L:angleonD}
    Let $0 < r_0 < 1$ and $z_0= r_0e^{i\theta_0}$.  Write
    \begin{equation*}
        b_{z_0}(z) = e^{i  \arg b_{z_0}(z) },
    \end{equation*}
    in which $-\pi \leq \arg b_{z_0}(z) < \pi$. Then
    \begin{equation} \label{E:argbzindn0}
        \arg b_{z_0} (z) = \arcsin \frac{\Im(z_0 \overline{z})
        (1-|z_0|^2)}{|z_0| |z_0-z| |1-\overline{z_0} z|}.
    \end{equation}
\end{Theorem}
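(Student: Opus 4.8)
The plan is to reduce the statement to the elementary identity $\sin(\arg w) = \Im w/|w|$, valid for every nonzero $w \in \C$, and then to invert the sine. The modulus of $b_{z_0}(z)$ is already under control via \eqref{E:modulusbz}, so the real work is to compute $\Im b_{z_0}(z)$ and then to check that $\arg b_{z_0}(z)$ lands in the range of $\arcsin$. First I would rewrite the Blaschke factor in a form suited to separating real and imaginary parts. Using $|z_0|/z_0 = \overline{z_0}/|z_0|$,
\begin{equation*}
    b_{z_0}(z) = \frac{\overline{z_0}\,(z_0 - z)}{|z_0|\,(1 - \overline{z_0}z)} = \frac{|z_0|^2 - \overline{z_0}z}{|z_0|\,(1 - \overline{z_0}z)}.
\end{equation*}

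Next I would rationalize, multiplying numerator and denominator by $\overline{1 - \overline{z_0}z} = 1 - z_0\overline{z}$. The denominator becomes $|z_0|\,|1 - \overline{z_0}z|^2$, which is real and positive, so $\Im b_{z_0}(z)$ equals $1/(|z_0|\,|1-\overline{z_0}z|^2)$ times the imaginary part of $(|z_0|^2 - \overline{z_0}z)(1 - z_0\overline{z})$. Expanding this product and using $\overline{z_0}z\cdot z_0\overline{z} = |z_0|^2|z|^2 \in \R$ together with $\Im(\overline{z_0}z) = -\Im(z_0\overline{z})$, the purely real terms cancel and I expect
\begin{equation*}
    \Im b_{z_0}(z) = \frac{(1 - |z_0|^2)\,\Im(z_0\overline{z})}{|z_0|\,|1 - \overline{z_0}z|^2}.
\end{equation*}
Since $|b_{z_0}(z)| = |z_0 - z|/|1 - \overline{z_0}z|$ by \eqref{E:modulusbz}, dividing $\Im b_{z_0}(z)$ by $|b_{z_0}(z)|$ produces exactly the argument of the $\arcsin$ in \eqref{E:argbzindn0}; one factor of $|1-\overline{z_0}z|$ survives in the denominator alongside $|z_0|$ and $|z_0 - z|$.

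The delicate point---and the one I expect to be the main obstacle---is the final inversion. Because $\arcsin$ has range $(-\pi/2, \pi/2)$, the identity $\arg b_{z_0}(z) = \arcsin(\sin\arg b_{z_0}(z))$ holds only when $\cos(\arg b_{z_0}(z)) \ge 0$, i.e. when $\Re b_{z_0}(z) \ge 0$. I would verify this by the same rationalization: with $u = \overline{z_0}z$, the sign of $\Re b_{z_0}(z)$ agrees with that of $\Re[(|z_0|^2 - u)(1 - \overline{u})] = |z_0|^2 + |u|^2 - (1 + |z_0|^2)\Re u$, and since $\Re u \le |u|$ this is at least $(|z_0|^2 - |u|)(1 - |u|)$, which is nonnegative as soon as $|u| = |z_0||z| \le |z_0|^2$, that is, for $|z| \le |z_0|$. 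This is precisely the region (flagged in the discussion preceding the theorem) in which $b_{z_0}$ has no zero and $\arg b_{z_0}$ is continuous and single-valued; there the formula is valid. Outside it the real part of $b_{z_0}(z)$ can be negative---already on the real axis for $z$ beyond $z_0$---and then the principal $\arcsin$ undershoots $\arg b_{z_0}(z)$ by a multiple of $\pi$, so the formula must be read on $\{|z| \le |z_0|\}$.
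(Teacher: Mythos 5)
Your argument is correct, and it is worth noting that the paper states Theorem \ref{L:angleonD} without proof, so there is no in-text argument to compare against; yours is the natural one. The computation checks out: writing $b_{z_0}(z)=\bigl(|z_0|^2-\overline{z_0}z\bigr)/\bigl(|z_0|(1-\overline{z_0}z)\bigr)$ and rationalizing gives $\Im b_{z_0}(z)=(1-|z_0|^2)\Im(z_0\overline{z})/\bigl(|z_0|\,|1-\overline{z_0}z|^2\bigr)$, and dividing by $|b_{z_0}(z)|=|z_0-z|/|1-\overline{z_0}z|$ from \eqref{E:modulusbz} yields exactly the quantity inside the $\arcsin$ in \eqref{E:argbzindn0}. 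Your flag on the inversion step is not pedantry but a genuine observation about the theorem as stated: since $\arcsin$ takes values in $[-\pi/2,\pi/2]$, the identity $\arg b_{z_0}(z)=\arcsin\bigl(\Im b_{z_0}(z)/|b_{z_0}(z)|\bigr)$ requires $\Re b_{z_0}(z)\geq 0$, and it visibly fails, e.g., for $z_0=r_0>0$ and real $z\in(r_0,1)$, where the right side is $\arcsin 0=0$ but $b_{z_0}(z)<0$ so $\arg b_{z_0}(z)=-\pi$ under the stated convention. Your sufficient condition $|z|\leq|z_0|$, obtained from $\Re\bigl[(|z_0|^2-u)(1-\overline{u})\bigr]\geq(|z_0|^2-|u|)(1-|u|)$ with $u=\overline{z_0}z$, is verified correctly and is consistent with the paper's own caveat, in the paragraph preceding the theorem, that $\theta\mapsto\arg b_{z_0}(re^{i\theta})$ is continuous only for $r<|z_0|$ and acquires jump discontinuities for $r>|z_0|$. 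In short: the proof is sound, and you have correctly identified the implicit domain restriction that the theorem statement omits.
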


In many applications, a precise formula is unnecessary.   For instance, the following
corollary was used by Frostman \cite{MR0012127} to discuss the boundary behavior of infinite Blaschke products. 

\begin{Corollary} \label{C:argbzind}
    Let $z_0 \in \D \backslash \{0\}$. Then for every $r \in (0, 1)$,
    \begin{equation*}
        |\arg b_{z_0} (re^{i\theta})| \leq 8\pi 
        \frac{1-|z_0|}{|e^{i\theta}-z_0|}.
    \end{equation*}
\end{Corollary}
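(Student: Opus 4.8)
The plan is to derive the bound directly from the exact formula in Theorem~\ref{L:angleonD}, replace $\arcsin$ by its linear majorant, and reduce the whole statement to a single elementary geometric inequality. First I would normalize. Since $b_{\gamma z_0}(\gamma z)=b_{z_0}(z)$ for every $\gamma\in\T$, and since the quantities $1-|z_0|$ and $|e^{i\theta}-z_0|$ are unchanged under the simultaneous rotation $z\mapsto\gamma z$, $z_0\mapsto\gamma z_0$, there is no loss in assuming $\theta_0=0$, so that $z_0=r_0\in(0,1)$. Writing $z=re^{i\theta}$ and $d:=|e^{i\theta}-z_0|$, the goal becomes $|\arg b_{z_0}(z)|\le 8\pi(1-r_0)/d$.

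Next I would split on the size of $d$. If $d\le 8(1-r_0)$, the right-hand side is at least $\pi$, while $|\arg b_{z_0}(z)|\le\pi$ because the argument is normalized to $[-\pi,\pi)$; so this range is free. All the content lies in the complementary regime $d>8(1-r_0)$. Since $d=|e^{i\theta}-r_0|\le 1+r_0\le 2$, this regime forces $1-r_0<\tfrac14$, i.e. $r_0>\tfrac34$, and this lower bound on $r_0$ is what will drive the estimate.

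In that regime I would first settle a branch issue, since the range of $\arcsin$ is only $[-\pi/2,\pi/2]$ whereas the argument is normalized to $[-\pi,\pi)$: the formula of Theorem~\ref{L:angleonD} represents the principal argument only where $\Re b_{z_0}(z)\ge 0$. A short computation shows $\Re b_{z_0}(z)$ has the sign of $r_0(1+r^2)-(1+r_0^2)r\cos\theta$, so $\Re b_{z_0}(z)<0$ forces $\cos\theta>2r_0/(1+r_0^2)$ (using $(1+r^2)/r\ge 2$), whence $d^2<(1-r_0^2)^2/(1+r_0^2)\le 4(1-r_0)^2$. Thus $\Re b_{z_0}(z)<0$ implies $d<2(1-r_0)$, i.e. we are back in the free range; so in the regime $d>8(1-r_0)$ we do have $\Re b_{z_0}(z)\ge 0$ and Theorem~\ref{L:angleonD} applies verbatim. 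Using $|\arcsin t|\le\tfrac{\pi}{2}|t|$, the identity $\Im(z_0\overline z)=-r_0 r\sin\theta$, and $1-r_0^2\le 2(1-r_0)$, the desired bound then reduces to the single geometric inequality
\begin{equation*}
r|\sin\theta|\,d\ \le\ 8\,|z_0-z|\,|1-\overline{z_0}z|. \tag{$\star$}
\end{equation*}

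Finally I would prove $(\star)$. Squaring and inserting $|z_0-z|^2=(r_0-r)^2+2r_0 r(1-\cos\theta)$, $|1-\overline{z_0}z|^2=(1-r_0 r)^2+2r_0 r(1-\cos\theta)$, $d^2=(1-r_0)^2+2r_0(1-\cos\theta)$, together with $\sin^2\theta\le 2(1-\cos\theta)$, and then using $|z_0-z|^2\ge 2r_0 r(1-\cos\theta)$ to cancel a common factor, the claim collapses to $r\,d^2\le 64\,r_0\,|1-\overline{z_0}z|^2$, which compares $r(1-r_0)^2+2r_0 r(1-\cos\theta)$ against $64r_0(1-r_0 r)^2+128r_0^2 r(1-\cos\theta)$ and holds term-by-term precisely because $r_0>\tfrac34$ makes $64r_0>1$. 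The hard part is exactly this last inequality: the factor $|z_0-z|$ degenerates to $0$ at $z=z_0$, so no crude lower bound on it is available, and the estimate survives only because the vanishing of $|\sin\theta|$ in the numerator and the lower bound $r_0>\tfrac34$ compensate. The branch bookkeeping of the previous paragraph is the secondary obstacle, and it is essential: without it one would be bounding a quantity that can differ from $\arg b_{z_0}(z)$ by as much as $\pi$.
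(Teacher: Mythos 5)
Your proof is correct and follows the route the paper intends: the corollary is stated there without proof as a consequence of Theorem \ref{L:angleonD}, and your derivation from that formula --- the split at $d\leq 8(1-|z_0|)$, the linear bound $|\arcsin t|\leq \tfrac{\pi}{2}|t|$, and the reduction to the elementary inequality $(\star)$, which checks out term by term --- is sound. Your treatment of the branch issue is in fact a point of rigor the paper glosses over, since \eqref{E:argbzindn0} can only represent the principal argument where $\Re b_{z_0}(z)\geq 0$, and you correctly show the complementary region falls into the range where the claimed bound is trivially at least $\pi$.
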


\section{Basic properties}
Let $z_1,z_2,\ldots,z_n$ be a finite sequence in $\D$ and let
$\alpha \in \R$. Then
\begin{equation}\label{eq:FBPB}
B(z) = e^{i\alpha} \prod_{k=1}^{n} \frac{z_k-z}{1-\overline{z_k}  z}
\end{equation}
is a finite Blaschke product. Recall that if $P$ and $Q$ are polynomials that have no nonconstant
common factors, then the \emph{order} (or \emph{degree}) of the rational function $f = P/Q$ is 
$\deg f = \max\{\deg P,  \deg Q\}$.  Thus, the finite Blaschke product \eqref{eq:FBPB} has order $n$.

The results of Section \ref{Section:Factors} tell us that
\begin{align}
    |B(z)| &< 1, \quad \text{if $z \in \D$}, \label{E:bles1up1}\\[5pt]
    |B(\zeta)|  &= 1,  \quad  \text{if $\zeta \in \T$}, \label{E:bles1up2}\\[5pt]
    |B(z)| &> 1, \quad \text{if $z \in \widehat{\C} \backslash \D^{-}$}, \label{E:bles1up3}
\end{align}
and
\begin{equation}
    B(z)  = \frac{1}{\,\overline{B(1/\overline{z})}\,}, \quad \text{if $z \in \widehat{\C}$}. \label{E:bles1up4}
\end{equation}
In particular, every finite Blaschke product belongs to $H^{\infty}:=H^\infty(\D)$, the set of bounded analytic functions on $\D$.

%%%%%%%%%%%%%%%%%%%%%%%%%%%%%%%%%%%%%%%%%%%%%%%%%%%%%%%%%%%%%%%%%
%%%%%%%%%%%%%%%%%%%%%%%%%%%%%%%%%%%%%%%%%%%%%%%%%%%%%%%%%%%%%%%%%
%%%%%%%%%%%%%%%%%%%%%%%%% NEW SECTION %%%%%%%%%%%%%%%%%%%%%%%%%%%
%%%%%%%%%%%%%%%%%%%%%%%%%%%%%%%%%%%%%%%%%%%%%%%%%%%%%%%%%%%%%%%%%
%%%%%%%%%%%%%%%%%%%%%%%%%%%%%%%%%%%%%%%%%%%%%%%%%%%%%%%%%%%%%%%%%

%\subsection*{$B$ is an $n$-to-$1$ function}

If $B$ is a finite Blaschke product \eqref{eq:FBPB} of degree $n$, then $B(z)=0$ has exactly $n$ solutions in $\D$,
counted according to multiplicity.  The solutions are precisely the zeros $z_1,z_2,\ldots,z_n$ of $B$. 
More generally, $B$ is an $n$-to-$1$ function on $\widehat{\C}$.  This follows from the fact that \eqref{eq:FBPB}
permits us to write the equation $B(z) = w$ as a polynomial equation in $z$ of degree $n$.

\begin{Theorem} \label{T:solution-n}
    Let $B$ be a finite Blaschke product of degree $n$. Then for each $w \in \widehat{\C}$ the equation
    $B(z) = w$ has exactly $n$ solutions, counted according to multiplicity.  If $w \in \D$, these solutions belong to $\D$.  
    If $w \in \D_{e}$, these solutions belong to $\D_{e}$.
    If $w \in \T$, these solutions belong to $\T$.  
\end{Theorem}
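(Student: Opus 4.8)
The plan is to clear denominators, reduce $B(z) = w$ to a polynomial equation, count its roots, and then read off the location of the solutions from the modulus trichotomy \eqref{E:bles1up1}--\eqref{E:bles1up3}. Write $B = P/Q$ in lowest terms, with $P(z) = e^{i\alpha}\prod_{k=1}^{n}(z_k - z)$ and $Q(z) = \prod_{k=1}^{n}(1 - \overline{z_k} z)$. The zeros of $P$ are the points $z_k \in \D$, while the zeros of $Q$ are the points $1/\overline{z_k} \in \D_e$; since $\D$ and $\D_e$ are disjoint, $P$ and $Q$ have no common factor, and $\deg B = \max(\deg P, \deg Q) = n$.

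Next I would count solutions in $\widehat{\C}$. For finite $w \in \C$, a point $z \in \C$ that is not a pole satisfies $B(z) = w$ exactly when $P(z) - wQ(z) = 0$. As $\deg(P - wQ) \le n$, there are at most $n$ finite roots, and generically exactly $n$. The degree can drop below $n$ only at the single value $w = B(\infty)$ (which equals $e^{i\alpha}/\prod_k \overline{z_k}$ when no $z_k$ is $0$); at that $w$ the deficit is supplied by the solution $z = \infty$. For $w = \infty$ the solutions are the poles of $B$, namely the $n$ zeros $1/\overline{z_k}$ of $Q$, together with a pole at $\infty$ of the appropriate order if $B$ has a zero at the origin. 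In every case the total, counted with multiplicity, is exactly $n$ --- this is precisely the statement that a rational map of degree $n$ is $n$-to-$1$.

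The location assertions then follow immediately. If $z$ solves $B(z) = w$, then $|B(z)| = |w|$, so \eqref{E:bles1up1}--\eqref{E:bles1up3}, together with $|B(\infty)| > 1$, force $z$ into $\D$, $\T$, or $\D_e$ according as $|w| < 1$, $|w| = 1$, or $|w| > 1$. When $w \in \D$ or $w \in \T$ one has $w \neq B(\infty)$, so $P - wQ$ has full degree $n$ and all $n$ solutions are finite, lying in $\D$ or in $\T$ respectively; when $w \in \D_e$ the solutions lie in $\D_e$, the borderline possibilities $z = \infty$ (when $w = B(\infty)$) and $w = \infty$ already being absorbed into the count above.

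The main obstacle is purely the bookkeeping at $z = \infty$ and $w = \infty$: keeping the multiplicity count honest when the degree of $P - wQ$ drops, and when a zero of $B$ sits at the origin so that $B(\infty) = \infty$. A cleaner if less elementary route sidesteps this by regarding $B$ as a proper holomorphic self-map of $\widehat{\C}$ and applying the argument principle on $\T$. Each Blaschke factor maps $\T$ onto $\T$ winding exactly once (this is visible from Theorem \ref{L:angleont}), so $B|_{\T}$ has winding number $n$; hence for $w \in \D$ the winding number of $B(\T)$ about $w$ equals $n$, producing exactly $n$ preimages inside $\D$, with the remaining cases handled by the modulus trichotomy and the symmetry \eqref{E:bles1up4}.
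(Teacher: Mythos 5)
Your proposal is correct and follows essentially the same route as the paper, which justifies the theorem only by the remark that \eqref{eq:FBPB} lets one rewrite $B(z)=w$ as a polynomial equation of degree $n$ and then appeals to the modulus trichotomy \eqref{E:bles1up1}--\eqref{E:bles1up3}. Your version is in fact more careful than the paper's one-line sketch, since you explicitly track the degree drop at $w=B(\infty)$, the case $w=\infty$, and the effect of a zero of $B$ at the origin.
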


For $w \in \D$, the equation $B(z)=w$ may have repeated solutions. This occurs, for instance, if $w = 0$, where we obviously have repeated zeros.
However, if $w \in \T$, then repetition does not occur; see Corollary \ref{T:solution-n-11}.

Expressions involving the modulus of a finite Blaschke product are important in many
applications.  For instance, the following theorem, which generalizes
\eqref{eq:Modb_0Thing},
 was used by Frostman to discuss the boundary properties of the derivative of infinite Blaschke products\cite{MR0012127} and by Pekarski{\u\i} \cite{MR652843} to estimate the derivative of a Cauchy transform. 

\begin{Theorem} \label{T:identityforb2}
    Let
    \begin{equation}\label{eq:Bgenc}
        B(z) = \prod_{j=1}^{n} \frac{z-z_j}{1-\overline{z_j}  z},
    \end{equation}
    $B_1=1$, and
    \begin{equation*}\qquad\qquad
        B_k(z) = \prod_{j=1}^{k-1} \frac{z - z_j}{1-\overline{z_j}  z} \quad \text{for $2 \leq k \leq n$}.
    \end{equation*}
    For each $z \in \C \backslash \T$,
    \begin{equation}\label{eq:1BmodId}
        \frac{1-|B(z)|^2}{1-|z|^2} = \sum_{k=1}^{n} |B_k(z)|^2  \frac{1-|z_k|^2}{|1-\overline{z_k}  z|^2}.
    \end{equation}
\end{Theorem}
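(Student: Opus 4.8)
The plan is to reduce the identity to a short telescoping sum. Write $\beta_j(z) = \frac{z-z_j}{1-\overline{z_j}z}$ for each $j$, so that $B = \beta_1\beta_2\cdots\beta_n$ and $B_k = \beta_1\cdots\beta_{k-1}$ with the convention $B_1 = 1$. Since $\beta_j$ differs from the normalized Blaschke factor $b_{z_j}$ only by a unimodular constant (and equals $z$ when $z_j = 0$), we have $|\beta_j(z)| = |b_{z_j}(z)|$, so the one-factor modulus identity \eqref{eq:Modb_0Thing} yields, for $z \in \C\setminus\T$,
\begin{equation*}
\frac{1-|z_k|^2}{|1-\overline{z_k}z|^2} = \frac{1-|\beta_k(z)|^2}{1-|z|^2}.
\end{equation*}
Substituting this into the right-hand side of \eqref{eq:1BmodId} and cancelling the common factor $1-|z|^2$, the claimed identity becomes equivalent to
\begin{equation*}
1 - |B(z)|^2 = \sum_{k=1}^{n} |B_k(z)|^2\bigl(1 - |\beta_k(z)|^2\bigr).
\end{equation*}

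Next I would exploit the recursive structure $B_{k+1} = B_k\,\beta_k$, valid for $1 \leq k \leq n$ once we set $B_{n+1} := B$. Taking squared moduli gives $|B_{k+1}|^2 = |B_k|^2|\beta_k|^2$, so each summand telescopes:
\begin{equation*}
|B_k(z)|^2\bigl(1 - |\beta_k(z)|^2\bigr) = |B_k(z)|^2 - |B_{k+1}(z)|^2.
\end{equation*}
Summing from $k=1$ to $n$ collapses the series to $|B_1(z)|^2 - |B_{n+1}(z)|^2 = 1 - |B(z)|^2$, since $B_1 = 1$ and $B_{n+1} = B$. This is exactly the reduced identity, which completes the argument.

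The proof is essentially elementary, and I do not anticipate a genuine obstacle; the only points needing care are bookkeeping ones. First, one must confirm that the factors in \eqref{eq:Bgenc} can be handled by \eqref{eq:Modb_0Thing} despite the cosmetic difference between $\frac{z-z_j}{1-\overline{z_j}z}$ and $b_{z_j}$, which is immediate once one notes that their moduli coincide. Second, the hypothesis $z \in \C\setminus\T$ is precisely what makes \eqref{eq:Modb_0Thing} applicable, since it guarantees $1-|z|^2 \neq 0$; at the poles $z = 1/\overline{z_k}$ both sides of \eqref{eq:1BmodId} are read as the common value of the underlying rational expressions, so nothing goes wrong there. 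The real content of the theorem is thus the recognition that \eqref{eq:1BmodId} is nothing more than the telescoped form of the product rule $|B_{k+1}|^2 = |B_k|^2|\beta_k|^2$ combined with the single-factor modulus formula.
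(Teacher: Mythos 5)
Your proof is correct and is essentially the paper's argument: the paper proceeds by induction on $n$, and its inductive step is exactly your telescoping identity $1-|B_{k+1}|^2=(1-|B_k|^2)+|B_k|^2\bigl(1-|\beta_k|^2\bigr)$ combined with the one-factor modulus formula \eqref{eq:Modb_0Thing}. Writing the induction as an explicit telescoping sum is a presentational difference only.
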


\begin{proof}
    We induct on $n$.  The case $n=1$ is \eqref{E:modulusbz}.  Suppose that \eqref{eq:1BmodId}
    holds for any Blaschke product of order $n-1$.  By the induction hypothesis,
    \begin{equation} \label{E:t-inductiona-b2}
        \frac{1-|B_n(z)|^2}{1-|z|^2} = \sum_{k=1}^{n-1} |B_k(z)|^2  \frac{1-|z_k|^2}{|1-\overline{z_k}  z|^2}.
    \end{equation}
    Since
    \begin{equation*}
        B(z) = B_n(z)  \frac{z-z_n}{1-\overline{z_n}  z},
    \end{equation*}
    we have
    \begin{align*}
        1-|B(z)|^2 &= 1-|B_n(z)|^2  \left|\frac{z_n-z}{1-\overline{z_n}  z}\right|^2\\
        &= 1-|B_n(z)|^2 +  |B_n(z)|^2 \left( 1- \left|\frac{z_n-z}{1-\overline{z_n}  z}\right|^2 \right)\\
        &= 1-|B_n(z)|^2 +  |B_n(z)|^2  \frac{(1-|z|^2) (1-|z_n|^2)}{|1-\overline{z_n}  z|^2}.
    \end{align*}
    To complete the induction, we appeal to \eqref{E:t-inductiona-b2} and obtain
    \begin{align*}
        \frac{1-|B(z)|^2}{1-|z|^2} &= \frac{1-|B_n(z)|^2}{1-|z|^2} + |B_n(z)|^2  \frac{1-|z_n|^2}{|1-\overline{z_n}  z|^2}\\
        &= \sum_{k=1}^{n} |B_k(z)|^2  \frac{1-|z_k|^2}{|1-\overline{z_k}  z|^2}.\qedhere
    \end{align*}
\end{proof}

The family of finite Blaschke products is conformally invariant; that is, it is 
invariant under the change of variables $z \mapsto \phi(z)$ for all $\phi \in \Aut \D$.
In fact, the degree of a finite Blaschke product is conformally invariant.

\begin{Lemma} \label{L:balschke-conformal}
    Let $B$ be a finite Blaschke product of degree $n$ and let $w \in \D$. 
    Then $\tau_w \circ B$ and $B \circ \tau_w$ are finite Blaschke products of degree $n$.
\end{Lemma}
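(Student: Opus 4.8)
The plan is to handle the two compositions by entirely different mechanisms, since $B\circ\tau_w$ factors through $\Aut\D$ while $\tau_w\circ B$ does not. For $B\circ\tau_w$ I would exploit the group structure of $\Aut\D$ together with Theorem \ref{T:automorph-disc}. Writing $B=e^{i\alpha}\prod_{k=1}^{n}\tau_{z_k}$ as in \eqref{eq:FBPB}, each factor $\tau_{z_k}$ belongs to $\Aut\D$ because $z_k\in\D$. Since $\tau_w\in\Aut\D$ and $\Aut\D$ is a group, each $\tau_{z_k}\circ\tau_w$ is again an automorphism, so by Theorem \ref{T:automorph-disc} there exist $\gamma_k\in\T$ and $w_k\in\D$ with $\tau_{z_k}\circ\tau_w=\rho_{\gamma_k}\circ\tau_{w_k}=\gamma_k\tau_{w_k}$. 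Substituting gives $B\circ\tau_w=e^{i\alpha}\prod_{k=1}^{n}\gamma_k\tau_{w_k}=e^{i\beta}\prod_{k=1}^{n}\frac{w_k-z}{1-\overline{w_k}z}$, which is visibly of the form \eqref{eq:FBPB} and hence a finite Blaschke product of degree $n$.

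For $\tau_w\circ B$ I would argue through a zero/pole count rather than the group law. Set $R=\tau_w\circ B=\frac{w-B}{1-\overline{w}B}$, a rational function. Its zeros are the solutions of $B(z)=w$; since $w\in\D$, Theorem \ref{T:solution-n} places all $n$ of them, counted with multiplicity, in $\D$, say at $w_1,\dots,w_n$. Its poles are the solutions of $B(z)=1/\overline{w}$, and since $1/\overline{w}\in\D_{e}$ the same theorem places them all in $\D_{e}$. In particular $R$ is analytic and zero-free on $\T$, where $|R|=1$ because $|B|=1$ there by \eqref{E:bles1up2} and $\tau_w$ preserves $\T$.

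Finally I would identify $R$ explicitly. Form the genuine Blaschke product $\widetilde{B}(z)=\prod_{k=1}^{n}\frac{w_k-z}{1-\overline{w_k}z}$, which has degree $n$ and carries exactly the zeros $w_1,\dots,w_n$ of $R$ with their multiplicities, and consider $g=R/\widetilde{B}$. By the preceding paragraph the zeros and poles of $R$ in $\D$ cancel against those of $\widetilde{B}$, so $g$ is a rational function with neither zeros nor poles in $\D^{-}$, and $|g|=1$ on $\T$ since both $|R|=1$ and $|\widetilde{B}|=1$ there. Applying the maximum modulus principle to the analytic, zero-free functions $g$ and $1/g$ on $\D^{-}$ forces $|g|\equiv 1$, so $g$ is a unimodular constant and $R=e^{i\gamma}\widetilde{B}$ is a finite Blaschke product of degree $n$. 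I expect this last direction to be the main obstacle: because $\tau_w\circ B$ does not decompose as a product of automorphisms, one cannot read off the conclusion from the group law as in the first part, and must instead combine the counting furnished by Theorem \ref{T:solution-n} with the max-modulus identification of $g$ as a constant.
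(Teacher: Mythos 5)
Your proof is correct, and both halves land on the same essential ingredients as the paper's, but you unpack them differently. For $\tau_w\circ B$ the paper simply observes that the composition is continuous on $\D^-$ and unimodular on $\T$, invokes the characterization in Corollary \ref{C:cc-extmfinb} (a forward reference, proved later via Fatou's Theorem \ref{T:extmfinb}), and then counts the degree with Theorem \ref{T:solution-n} exactly as you do. You instead inline that characterization: you locate the zeros and poles of $\tau_w\circ B$ with Theorem \ref{T:solution-n}, build the candidate Blaschke product $\widetilde{B}$ by hand, and run the maximum modulus principle on $g$ and $1/g$ --- which is precisely the proof of Theorem \ref{T:extmfinb} specialized to a rational function. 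What this buys you is a self-contained argument with no forward reference; what it costs is length, plus one tiny loose end: when $w=0$ the point $1/\overline{w}$ must be read as $\infty$ (so the poles of $\tau_0\circ B=-B$ are just the poles of $B$), which Theorem \ref{T:solution-n} does accommodate since it is stated for $w\in\widehat{\C}$, but you should say so. For $B\circ\tau_w$ the paper only writes ``can be verified directly,'' and your computation via the group law in $\Aut\D$ and Theorem \ref{T:automorph-disc} is exactly the intended direct verification; the degree count there is clean because the resulting numerator has all roots in $\D$ and the denominator all roots in $\D_e$, so no cancellation can occur.
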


\begin{proof}
    The function $\tau_w \circ B$ is analytic on $\D$, continuous on $\D^- $, and unimodular on $\T$. 
    Corollary \ref{C:cc-extmfinb} (see below) ensures that $\tau_w \circ B$ is a finite Blaschke product. Moreover,
    $(\tau_w \circ B)(z) = 0$ if and only if $B(z) = w$. Theorem \ref{T:solution-n} tells us that 
    the equation $B(z) = w$ has exactly $n$ solutions in $\D$.  Thus, $\tau_w \circ B$ is a finite Blaschke product of degree $n$. 
    That $B \circ \tau_w$ is a finite Blaschke product of degree can be verified directly.
\end{proof}

The family of all finite Blaschke products is closed under pointwise multiplication. 
It is also closed under composition. In fact, Lemma \ref{L:balschke-conformal} already reveals a special case of this property.

\begin{Theorem} \label{T:t-balschke-composition}
    If $B_1$ and $B_2$ are finite Blaschke products, 
    then $B_1 \circ B_2$ is a finite Blaschke product.  Moreover, if $n_1$ and $n_2$ are the orders of $B_1$ and $B_2$, respectively,
    then the order of $B_1 \circ B_2$ is $n_1n_2$.
\end{Theorem}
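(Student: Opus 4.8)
The plan is to prove the two assertions separately: first that the composition is a finite Blaschke product, then that the orders multiply. For the first claim I would follow the strategy already used in Lemma~\ref{L:balschke-conformal}, namely verify the boundary hypotheses of Corollary~\ref{C:cc-extmfinb}. Since $|B_2(z)| < 1$ for $z \in \D$ by \eqref{E:bles1up1}, the image $B_2(\D)$ lands inside $\D$, so $B_1 \circ B_2$ is a well-defined analytic function on $\D$. Each finite Blaschke product extends analytically across $\T$ (indeed to a neighborhood of $\D^-$), so $B_1 \circ B_2$ is continuous on $\D^-$. Finally, for $\zeta \in \T$ we have $B_2(\zeta) \in \T$ by \eqref{E:bles1up2}, whence $|B_1(B_2(\zeta))| = 1$ for the same reason. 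Thus $B_1 \circ B_2$ is analytic on $\D$, continuous on $\D^-$, and unimodular on $\T$, so Corollary~\ref{C:cc-extmfinb} identifies it as a finite Blaschke product.

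For the order, the idea is to invoke Theorem~\ref{T:solution-n}, which guarantees that a finite Blaschke product of degree $N$ assumes every value $w \in \D$ exactly $N$ times in $\D$, counted with multiplicity. Having established that $B_1 \circ B_2$ is a finite Blaschke product of some order $N$, it suffices to count the solutions in $\D$ of $(B_1 \circ B_2)(z) = w$ for a single convenient $w \in \D$. I would carry this out in two stages: first locate the preimages $u_1, \ldots, u_{n_1} \in \D$ of $w$ under $B_1$ (Theorem~\ref{T:solution-n} supplies exactly $n_1$ of them), and then, for each $u_j$, locate the $n_2$ preimages in $\D$ of $u_j$ under $B_2$. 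Since $w \in \D$ forces each $u_j \in \D$, and $u_j \in \D$ forces the further preimages into $\D$, everything stays in the disk and the total count is $n_1 n_2$, so $N = n_1 n_2$.

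The main obstacle is the bookkeeping of multiplicities, so that $n_1 n_2$ emerges exactly rather than as a mere upper bound. The clean way is to note, via the chain rule or a local power-series computation, that if $u_0$ is a $B_1$-preimage of $w$ of multiplicity $p$ and $z_0$ is a $B_2$-preimage of $u_0$ of multiplicity $q$, then $z_0$ solves $(B_1 \circ B_2)(z) = w$ with multiplicity $pq$. Summing over the preimages $u_j$ (with multiplicities $p_j$, where $\sum_j p_j = n_1$) and, for each $j$, over the $B_2$-preimages of $u_j$ (with multiplicities summing to $n_2$), the total telescopes to $\sum_j p_j\, n_2 = n_1 n_2$. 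Alternatively one avoids multiplicities altogether by choosing $w \in \D$ outside the finite set consisting of the critical values of $B_1$ together with the $B_1$-images of the critical values of $B_2$; then the $n_1$ preimages $u_j$ are distinct and each avoids the critical values of $B_2$, producing $n_1 n_2$ distinct simple solutions.
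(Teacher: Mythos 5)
Your argument is correct, but it takes a genuinely different route from the paper's. The paper factors $B_1 = \gamma\,\tau_{z_1}\tau_{z_2}\cdots\tau_{z_{n_1}}$ into its Blaschke factors, so that $B_1 \circ B_2 = \gamma\,(\tau_{z_1}\circ B_2)\cdots(\tau_{z_{n_1}}\circ B_2)$; each factor $\tau_{z_k}\circ B_2$ is a finite Blaschke product of order $n_2$ by Lemma~\ref{L:balschke-conformal}, and since the order of a product of finite Blaschke products is the sum of the orders, the total $n_1 n_2$ falls out with no multiplicity bookkeeping at all. You instead apply the boundary characterization (Corollary~\ref{C:cc-extmfinb}) directly to the composition and then determine the order by counting the preimages of a single $w \in \D$ in two stages via Theorem~\ref{T:solution-n}, using the fact that local multiplicities multiply under composition. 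In effect you generalize the proof of Lemma~\ref{L:balschke-conformal} from $\tau_w \circ B$ to $B_1 \circ B_2$ rather than reducing the theorem to that lemma. What the paper's factorization buys is economy: the degree count is immediate from the product form. What your valence argument buys is independence from the special product structure of $B_1$ --- the same two-stage count shows more generally that valence is multiplicative under composition of proper holomorphic self-maps of $\D$. Both of your ways of closing the count (the chain-rule computation that a multiplicity-$p$ preimage composed with a multiplicity-$q$ preimage gives multiplicity $pq$, or choosing $w$ outside the finite set of critical values of $B_1$ and of $B_1$ applied to the critical values of $B_2$) are sound.
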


\begin{proof}
    Denote the zeros of $B_1$ by $z_1,z_2,\ldots,z_{n_1}$ and write
    \begin{equation*}
        B_1 = \gamma  \tau_{z_1}\tau_{z_2}\cdots\tau_{z_{n_1}},
    \end{equation*}
    in which $\gamma$ is a unimodular constant. Then
    \begin{equation*}
        B_1 \circ B_2 = \gamma  (\tau_{z_1} \circ B_2) \cdot (\tau_{z_2} \circ B_2)  \cdots  (\tau_{z_{n_1}} \circ B_2).
    \end{equation*}
    By Lemma \ref{L:balschke-conformal}, each $\tau_{z_k} \circ B_2$ is a finite Blaschke product of order $n_2$. 
    Consequently, $B_1 \circ B_2$ is a finite Blaschke product of order $n_1n_2$.
\end{proof}

\section{Characterizations of finite Blaschke products}

There are many function-theoretic characterizations of Blaschke products.  We 
recall here just a few of them.

\begin{Theorem}[Fatou \cite{MR1504825}] \label{T:extmfinb}
    If $f$ is analytic on $\D$ and 
    \begin{equation*}
    \lim_{|z| \to 1} |f(z)| = 1,
    \end{equation*}
    then $f$ is a finite Blaschke product.
\end{Theorem}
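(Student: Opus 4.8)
The plan is to divide out the zeros of $f$ by a finite Blaschke product and show that the resulting quotient is a unimodular constant. First I would note that $f$ is not identically zero and that its zeros cannot accumulate at $\T$: since $\lim_{|z|\to 1}|f(z)| = 1$, there is an $r_0 \in (0,1)$ with $|f(z)| \geq \tfrac{1}{2}$ whenever $r_0 \leq |z| < 1$, so every zero of $f$ must lie in the compact disk $\{|z| \leq r_0\}$. As the zeros of a nonzero analytic function are isolated, a compact set contains only finitely many of them, so $f$ has finitely many zeros $z_1,\dots,z_n$ in $\D$, counted with multiplicity.

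Next I would form the finite Blaschke product $B(z) = \prod_{k=1}^{n} \frac{z_k - z}{1-\overline{z_k}\,z}$ and set $g = f/B$. Since the zeros of $B$ match those of $f$ in both location and multiplicity, the singularities of $g$ at the points $z_k$ are removable and $g$ is analytic and, crucially, \emph{nonvanishing} on $\D$. Because $B$ is a finite Blaschke product it is continuous on $\D^-$ with $|B|=1$ on $\T$ by \eqref{E:bles1up2}, so $\lim_{|z|\to 1}|B(z)| = 1$. Combining this with the hypothesis on $f$ yields $\lim_{|z|\to 1}|g(z)| = 1$ as well.

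The heart of the argument is then a two-sided maximum-modulus estimate. Given $\varepsilon > 0$, I would choose $r \in (0,1)$ so that $1-\varepsilon < |g(z)| < 1+\varepsilon$ for all $r \leq |z| < 1$. Applying the maximum modulus principle to $g$ on each closed disk $\{|z| \leq \rho\}$ with $\rho \geq r$ forces $|g| \leq 1+\varepsilon$ throughout $\D$, while applying the same principle to the analytic function $1/g$ (where the nonvanishing of $g$ is exactly what is needed) forces $|g| \geq 1-\varepsilon$ throughout $\D$. Letting $\varepsilon \to 0$ gives $|g| \equiv 1$ on $\D$. An analytic function of constant modulus is constant, so $g \equiv \gamma$ for some $\gamma \in \T$, and therefore $f = \gamma B$ is a finite Blaschke product.

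The step I expect to require the most care is the passage from the boundary hypothesis to the global bounds on $g$: the maximum modulus principle must be invoked on closed subdisks $\{|z| \leq \rho\}$ rather than on $\D$ itself, since $f$—and hence $g$—is assumed analytic only on the open disk, with no a priori bound on all of $\D$. The other point that must be established rather than assumed is the nonvanishing of $g$, which is what legitimizes the symmetric estimate obtained from $1/g$ and thereby delivers the lower bound $|g| \geq 1-\varepsilon$.
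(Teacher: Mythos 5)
Your proof is correct and follows essentially the same route as the paper: confine the zeros to a compact subdisk, divide by the finite Blaschke product built from them, and use the maximum modulus principle on both the quotient and its reciprocal to conclude the quotient is a unimodular constant. Your $\varepsilon$-argument on closed subdisks simply spells out in more detail the step the paper compresses into ``the Maximum Modulus Principle ensures that $|f/B| \leq 1$ and $|B/f| \leq 1$ on $\D$.''
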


\begin{proof}
    Since $|f(z)| \to 1$ uniformly as $|z| \to 1$, there is an $r < 1$ so that $f$ is nonvanishing on the annulus
    $\{r \leq |z| < 1\}$.  Thus, $f$ has at most a finite number of zeros in $\D$. 
    Let $B$ be the finite Blaschke product formed from the zeros of $f$, repeated according to multiplicity.
    Then $f/B$ and $B/f$ are analytic in $\D$ and their moduli tend uniformly to $1$ as we approach $\T$. 
    The Maximum Modulus 
    Principle ensures that $|f/B| \leq 1$ and $|B/f| \leq 1$ on $\D$, so
    $f/B$ is constant on $\D$.  Since this constant must be unimodular,  $f$ is a unimodular scalar
    multiple of $B$.
\end{proof}

Each finite Blaschke product belongs the disk algebra $\mathcal{A}(\D)$,
the set of analytic functions on $\D$ that extend continuously on $\D^-$.  In fact, 
the finite Blaschke products are the \emph{only} elements of $\mathcal{A}(\D)$
that map $\T$ into $\T$.

\begin{Corollary} \label{C:cc-extmfinb}
    If $f \in \mathcal{A}(\D)$ is unimodular on $\T$, then $f$ is a finite Blaschke product.
\end{Corollary}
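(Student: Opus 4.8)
The plan is to deduce this directly from Fatou's theorem (Theorem \ref{T:extmfinb}). That result requires only that $f$ be analytic on $\D$ with $\lim_{|z| \to 1} |f(z)| = 1$, so the entire task reduces to upgrading the ostensibly pointwise boundary condition $|f| = 1$ on $\T$ to the uniform statement $\lim_{|z|\to 1}|f(z)| = 1$. Once that is in hand, the theorem applies verbatim and identifies $f$ as a finite Blaschke product.

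The key observation is that $f \in \mathcal{A}(\D)$ means $f$ extends continuously to the closed disk $\D^-$, so $|f|$ is continuous on the compact set $\D^-$ and hence uniformly continuous there. Combined with the hypothesis $|f(\xi)| = 1$ for every $\xi \in \T$, this forces $|f(z)|$ to be close to $1$ whenever $z$ is close to $\T$. To make this precise I would argue by contradiction: if $\lim_{|z|\to 1}|f(z)| = 1$ failed, there would be an $\epsilon > 0$ and a sequence $z_n \in \D$ with $|z_n| \to 1$ and $\bigl|\, |f(z_n)| - 1 \,\bigr| \geq \epsilon$. Compactness of $\D^-$ yields a subsequence $z_{n_k}$ converging to some $\xi$ with $|\xi| = 1$, so $\xi \in \T$; continuity of $f$ then gives $|f(z_{n_k})| \to |f(\xi)| = 1$, contradicting the lower bound. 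Hence the uniform boundary limit holds.

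There is essentially no serious obstacle here, since the corollary is a clean consequence of the theorem that precedes it. The only point requiring a moment's care is recognizing that the boundary condition on $\T$, together with continuity on the closed disk, is exactly what supplies the uniform radial limit needed to invoke Fatou's theorem; this is where the membership $f \in \mathcal{A}(\D)$ does all the work.
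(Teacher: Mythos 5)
Your proof is correct and is exactly the route the paper intends: the paper states this as an immediate corollary of Theorem \ref{T:extmfinb} (leaving the verification implicit), and your compactness argument upgrading the pointwise condition $|f|=1$ on $\T$ to the uniform limit $\lim_{|z|\to 1}|f(z)|=1$ is the standard way to fill in that step.
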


There is also meromorphic version of this. 

\begin{Corollary}\label{aoirhfc}
    Suppose $f$ is meromorphic on $\D$ and extends continuously to $\T$.
    If $f$ is unimodular on $\T$, then $f$ is a quotient of two Blaschke products. 
\end{Corollary}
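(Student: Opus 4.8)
The plan is to clear the poles of $f$ with a finite Blaschke product and thereby reduce to the analytic case already handled in Corollary \ref{C:cc-extmfinb}. The first task is to show that $f$ has only finitely many poles, all lying in $\D$. Because $f$ is continuous up to $\T$ with unimodular boundary values, its modulus stays near $1$ as $|z| \to 1$; hence there is an $r \in (0,1)$ for which $f$ is bounded and bounded away from $0$ on the annulus $\{r \leq |z| < 1\}$, so $f$ has neither poles nor zeros there. Since $f$ is meromorphic on $\D$, its poles cannot accumulate inside $\D$, so those remaining in the compact disk $\{|z| \leq r\}$ are finite in number; I would list them with multiplicity as $p_1,\ldots,p_m \in \D$.

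Next I would let $B_2$ be the finite Blaschke product whose zeros are exactly $p_1,\ldots,p_m$ (with matching multiplicities) and set $g = B_2 f$. The zeros of $B_2$ cancel the poles of $f$, so $g$ extends analytically across each $p_j$ and is therefore analytic on all of $\D$; as a product of two functions continuous on $\D^-$, it also lies in $\mathcal{A}(\D)$. On $\T$ we have $|g| = |B_2|\,|f| = 1$ by \eqref{E:bles1up2} together with the hypothesis $|f| = 1$. Corollary \ref{C:cc-extmfinb} then shows that $g$ is a finite Blaschke product, say $g = B_1$, whence $f = B_1/B_2$ is the desired quotient.

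The one genuinely delicate point is the first step, namely that the poles of $f$ do not accumulate at $\T$. This is precisely where the boundary unimodularity enters, via the continuity of $f$ up to $\T$ forcing $f$ to be bounded (and nonvanishing) near the circle. Once finiteness of the pole set is established, the rest is the routine cancellation above combined with the previously proved corollary, so I expect no further difficulty.
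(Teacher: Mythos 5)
Your argument is correct and is exactly the intended one: the paper states this corollary without proof as an immediate consequence of Corollary \ref{C:cc-extmfinb}, and the standard route is precisely yours --- use continuity and unimodularity on $\T$ to confine the (finitely many) poles to a compact subdisk, cancel them with a finite Blaschke product $B_2$, and apply the analytic case to $B_2 f$. The only cosmetic quibble is that $f$ itself is continuous only on $\D^- $ minus its poles, so the membership $B_2 f \in \mathcal{A}(\D)$ should be justified by analyticity on $\D$ after removing the singularities together with continuity near $\T$, which is what your argument in fact delivers.
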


For $z_1,z_2,\ldots,z_n$ in $\D$, we may write
\begin{equation}\label{eq:QuotientOfThings}
    B(z) =  \prod_{k=1}^{n} \frac{z-z_k}{1-\overline{z_k}  z},
     = \frac{\alpha_0+\alpha_1 z + \cdots + \alpha_n z^n}{\overline{\alpha}_n+\overline{\alpha}_{n-1}z+\cdots+\overline{\alpha}_0z^n},
\end{equation}
in which $\alpha_0,\alpha_1,\ldots,\alpha_{n-1} \in \C$ and $\alpha_n =1$. 
The roots of the polynomial in the numerator are precisely $z_1,z_2,\ldots,z_n$.
Conversely, any rational function of this form is a Blaschke product of order $n$.

\begin{Corollary} \label{C:extmfinb-rational}
    Let $\alpha_0,\alpha_1,\ldots,\alpha_n \in \C$ and $\alpha_n \neq 0$.  If all the roots of 
    \begin{equation*}
        P(z) = \alpha_0+\alpha_1 z + \cdots + \alpha_n z^n
    \end{equation*}
    are in $\D$, then
    \begin{equation*}
        f(z) = \frac{P(z)}{ z^n  \overline{P(1/\overline{z})} }
        = \frac{\alpha_0+\alpha_1 z + \cdots + \alpha_n z^n}{\overline{\alpha}_n+\overline{\alpha}_{n-1}z+\cdots+\overline{\alpha}_0z^n}
    \end{equation*}
    is a finite Blaschke product of order $n$.
\end{Corollary}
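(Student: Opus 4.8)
The plan is to reduce the statement to the characterization already in hand, Corollary \ref{C:cc-extmfinb}: it suffices to show that $f$ extends to a member of $\mathcal{A}(\D)$ that is unimodular on $\T$, and then to read off the order by counting zeros. The first bookkeeping step is to reconcile the two displayed formulas for $f$ and to understand the denominator. Writing $Q(z) = z^n \overline{P(1/\overline{z})}$ and expanding term by term gives $Q(z) = \sum_{k=0}^{n} \overline{\alpha}_k z^{n-k} = \overline{\alpha}_n + \overline{\alpha}_{n-1}z + \cdots + \overline{\alpha}_0 z^n$, which is exactly the second expression. More usefully, I would factor $P(z) = \alpha_n \prod_{k=1}^{n}(z - z_k)$ with each $z_k \in \D$, substitute into $Q$, and simplify using $\prod_k (1/z - \overline{z_k}) = z^{-n}\prod_k(1 - \overline{z_k}z)$ to obtain the reflection identity $Q(z) = \overline{\alpha}_n \prod_{k=1}^{n}(1 - \overline{z_k}z)$. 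Thus the roots of $Q$ are precisely the reflections $1/\overline{z_k}$ of the roots of $P$ across $\T$.

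Two short computations then finish the analytic input. For analyticity, since every $z_k$ lies in $\D$, each root $1/\overline{z_k}$ of $Q$ lies in $\C \backslash \D^{-}$; hence $Q$ is zero-free on $\D^{-}$ and $f = P/Q$ is analytic on a neighborhood of $\D^{-}$, so in particular $f \in \mathcal{A}(\D)$. (A vanishing $z_k$ causes no difficulty: the factor $z^n$ in $Q$ absorbs such a root, and $1 - \overline{z_k}z = 1$ in that case.) For unimodularity, I would use that $1/\overline{z} = z$ when $|z| = 1$, so that $P(1/\overline{z}) = P(z)$ and therefore $Q(z) = z^n \overline{P(z)}$; since $|z^n| = 1$ this gives $|Q(z)| = |P(z)|$ and hence $|f(z)| = 1$ for every $z \in \T$.

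Combining these two observations with Corollary \ref{C:cc-extmfinb} shows that $f$ is a finite Blaschke product. The order is then transparent from the factored form $f(z) = (\alpha_n/\overline{\alpha}_n)\prod_{k=1}^{n}\frac{z - z_k}{1 - \overline{z_k}z}$ produced in the first step: the unimodular constant $\alpha_n/\overline{\alpha}_n$ is absorbed into the phase $e^{i\alpha}$ of \eqref{eq:FBPB}, and $f$ has exactly the $n$ zeros $z_1,\ldots,z_n$ in $\D$, so its order is $n$. I do not expect a genuine obstacle here; the only real content is the reflection identity $Q(z) = \overline{\alpha}_n \prod_k (1 - \overline{z_k}z)$ together with the elementary fact that $1/\overline{z} = z$ on $\T$, and the rest is routine manipulation of conjugates and reciprocals. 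Indeed, once the factored form is available one may bypass Corollary \ref{C:cc-extmfinb} entirely, since that form is literally the definition \eqref{eq:FBPB} of a finite Blaschke product of order $n$; I would present the $\mathcal{A}(\D)$ route as the conceptual argument and note this direct factorization as the quickest confirmation.
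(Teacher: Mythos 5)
Your proposal is correct and follows essentially the same route as the paper: observe that $|f|=1$ on $\T$, invoke Corollary \ref{C:cc-extmfinb}, and identify the order by exhibiting $f$ as a unimodular multiple of the Blaschke product with zeros $z_1,\ldots,z_n$. You merely make explicit the verifications (the reflection identity for the denominator, analyticity on $\D^-$, and $1/\overline{z}=z$ on $\T$) that the paper compresses into the single word ``observe.''
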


\begin{proof}
    Observe that $|f| = 1$ on $\T$ and apply Corollary \ref{C:cc-extmfinb}.
We can be more specific: if the roots of $P(z)$ are $z_1,z_2,\ldots,z_n \in \D$, then
$f$ is a unimodular scalar multiple of the finite Blaschke product with zeros $z_1,z_2,\ldots,z_n$.
\end{proof}

Theorem \ref{T:solution-n} says that if $B$ is a finite Blaschke product of order $n$, then the restricted function $B : \D \to \D$ has 
constant valence $n$ for each $w \in \D$. This property actually characterizes the finite Blaschke products of order $n$.

\begin{Theorem}[Fatou \cite{MR1504787, MR1504797, MR1504792}] \label{T:fatou-valence}
    Let $f : \D \to \D$ be a surjective analytic function of constant valence $n \geq 1$. Then $f$ is a finite Blaschke product of order $n$.
\end{Theorem}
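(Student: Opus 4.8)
The plan is to show that $f$ is \emph{proper}, meaning $|f(z)| \to 1$ as $|z| \to 1$, and then to invoke Fatou's Theorem \ref{T:extmfinb} to conclude that $f$ is a finite Blaschke product; its order is afterward forced to equal the valence $n$ by Theorem \ref{T:solution-n}. In this way the entire problem reduces to the single analytic fact that a surjective self-map of $\D$ of finite constant valence cannot have interior boundary cluster values.

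Suppose, toward a contradiction, that $|f|$ does not tend to $1$ at the boundary. Then there exist $\varepsilon > 0$ and points $z_k \in \D$ with $|z_k| \to 1$ and $|f(z_k)| \leq 1 - \varepsilon$; after passing to a subsequence I may assume $f(z_k) \to w_0$ for some $w_0 \in \D$. By hypothesis the equation $f(z) = w_0$ has exactly $n$ solutions in $\D$, counted with multiplicity, so I can fix a radius $\rho \in (0,1)$ large enough that all of these solutions lie in $\{|z| < \rho\}$ while $f - w_0$ is zero-free on the circle $\{|z| = \rho\}$.

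The key step is to run the argument principle on this circle. The integer
\[
    N(w) = \frac{1}{2\pi i} \oint_{|z| = \rho} \frac{f'(z)}{f(z) - w}\, dz
\]
counts, with multiplicity, the solutions of $f(z) = w$ inside $\{|z| < \rho\}$, and $N(w_0) = n$ by the choice of $\rho$. Because $f \neq w$ on the compact circle $\{|z| = \rho\}$ for all $w$ in a sufficiently small neighborhood $U$ of $w_0$, the integrand varies continuously with $w$, so the integer-valued function $N$ is constant and equal to $n$ throughout $U$. Since the total valence in $\D$ also equals $n$, no solution of $f(z) = w$ can lie in the annulus $\{\rho \leq |z| < 1\}$ when $w \in U$. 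This is absurd: for large $k$ we have $f(z_k) \in U$ and $|z_k| > \rho$, so $z_k$ is precisely such a solution.

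Having established properness, Theorem \ref{T:extmfinb} shows at once that $f$ is a finite Blaschke product, and Theorem \ref{T:solution-n} identifies its order as $n$. The step demanding the most care is the argument-principle argument of the third paragraph: one must choose $\rho$ to sweep all preimages of $w_0$ into $\{|z| < \rho\}$ while keeping $f - w_0$ nonvanishing on the boundary circle, and then justify that the winding integral is locally constant in $w$. Once this localization is in place, the contradiction and the remaining identifications are routine.
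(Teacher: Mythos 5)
Your proof is correct, and it follows the same overall strategy as the paper's (which is Rad\'o's argument): reduce everything to showing $\lim_{|z|\to 1}|f(z)|=1$, then quote Theorem~\ref{T:extmfinb}, with the order pinned down by Theorem~\ref{T:solution-n}. The difference lies in how the key step --- that every $w$ near $w_0$ already has $n$ preimages in a compact part of $\D$ --- is carried out. The paper works locally: around each distinct solution $a_j$ of $f(z)=w_0$ it writes $f(z)=w_0+\big((z-a_j)f_j(z)\big)^{n_j}$ with $(z-a_j)f_j(z)$ injective on a small disk, uses the Open Mapping Theorem to find a common disk $D(w_0,\epsilon)$ hit $n_j$ times from each neighborhood, and must therefore arrange the disks to be pairwise disjoint and insist that $w\neq w_0$ to get distinct solutions. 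You instead take a single circle $\{|z|=\rho\}$ enclosing all $n$ preimages of $w_0$ and observe that the winding integral $N(w)=\frac{1}{2\pi i}\oint_{|z|=\rho}\frac{f'(z)}{f(z)-w}\,dz$ is a continuous integer-valued function of $w$ on a neighborhood $U$ of $w_0$, hence constantly $n$ there; since multiplicities are counted automatically, you need no distinctness bookkeeping and no exclusion of $w=w_0$, and the extra preimage $z_k$ in the annulus immediately overshoots the valence. Both arguments are sound; yours buys a shorter and cleaner localization at the cost of invoking the argument principle, while the paper's normal-form version is more elementary and makes the local $n_j$-to-one structure of $f$ explicit. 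The only housekeeping worth adding is to shrink $U$ so that $U\subset\D$, ensuring the constant-valence hypothesis applies to every $w\in U$.
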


\begin{proof}
    We follow the proof Rad\'o \cite{rad0-1922} and show that
    \begin{equation}\label{eq:RadoLimit}
        \lim_{|z| \to 1} |f(z)| = 1.
    \end{equation}
    If we can do this, then Theorem \ref{T:extmfinb} will ensure that $f$ is a finite Blaschke product.
    Suppose towards a contradiction that \eqref{eq:RadoLimit} fails.  Then there is a sequence $z_m$ of distinct points in $\D$ and a $w_0 \in \D$ so that 
    \begin{equation}
        \lim_{m \to \infty} |z_m| = 1 \quad \text{and} \quad \lim_{m \to \infty} f(z_m) = w_0.
    \end{equation}
    By hypothesis, $f(z_m) \neq w_0$ for all but finitely many $m$. 
    Let $a_1,a_2,\ldots,a_k$ be the distinct solutions of $f(z)=w_0$, respectively, with multiplicities $n_1,n_2,\ldots,n_k$. 
    By assumption, we have
    \begin{equation*}
        n_1+n_2+\cdots+n_k  = n.
    \end{equation*}
    About each point $a_j$, the function $f$ has a power series expansion
    \begin{equation*}
        f(z) = w_0 + \sum_{k = n_j}^{\infty} \frac{f^{(k)}(a_j)}{k!}  (z-a_j)^k,
    \end{equation*}
    in which $f^{(n_j)}(a_j) \neq 0$. 
    If $\epsilon_j > 0$ is sufficiently small, we can write
    \begin{equation} \label{E:t-fgjtomj}
        f(z) = w_0 + \big((z-a_j) f_j(z)\big)^{n_j}
    \end{equation}
    for $z$ in $$D(a_j,\epsilon_j) = \{ z : |z - a_j| < \epsilon_j\},$$ in which 
    $f_j$ is nonvanishing on $D(a_j, \epsilon_j)$. 
    Without loss of generality, we impose the extra restrictions
    \begin{equation*}
        \epsilon_j < (1-|a_j|)/2 \quad\text{and}\quad
        \epsilon_j < \min \{ |a_j-a_i|/2 : 1 \leq i \leq k, i \neq j \}
    \end{equation*}
    to ensure that the $D(a_j, \epsilon_j)$ are pairwise disjoint and do not intersect $\T$.
    
    Since $g_j(z) = (z-a_j)  f_j(z)$ has a simple zero at $a_j$, it is injective on a small neighborhood of $a_j$. 
    Thus, if necessary, we can make $\epsilon_j$ even smaller so that $g_j(z)$ is injective on $D(a_j, \epsilon_j)$. 
    The Open Mapping Theorem ensures that
    \begin{equation*}
        \bigcap_{j=1}^{k} \left(g_j(D(a_j, \epsilon_j))\right)^{n_j}
    \end{equation*}
    is an open set that contains the origin.  Let $\epsilon>0$ be small enough so that
    \begin{equation*}
        D(0, \epsilon) \subset \bigcap_{j=1}^{k} \Big(g_j\big(D(a_j, \epsilon_j) \big) \Big)^{n_j},
    \end{equation*}
    and set
    \begin{equation*}
        \qquad V_j = g_j^{-1}\big( D(0, \epsilon^{1/n_j} ) \big) \subset D(a_j, \epsilon_j), \quad \text{for $1 \leq j \leq k$}.
    \end{equation*}
    Observe that $g_j : V_j \to D(0,\epsilon^{1/n_j})$ is bijective with $g_j(a_j)=0$ and that 
    the open sets $V_j$ are pairwise disjoint and do not intersect $\T$.  Consequently, \eqref{E:t-fgjtomj} tells us that
    for each  $w \in D(w_0,\epsilon) \backslash \{w_0\}$, the equation $f(z)=w$ has exactly $n_j$ distinct solutions in $V_j$, for each $j$.
    
    Since $f(z_m) \to w_0$ and $|z_m| \to 1$, for sufficiently large $m$ we have $f(z_m) \in D(w_0,\epsilon)$, $f(z_m) \neq w_0$, and 
    \begin{equation*}
        z_m \not \in \bigcup_{j=1}^{k} V_j.
    \end{equation*}
    Fix any such $m$, and set $w_m = f(z_m)$.  Then each $V_j$ contains $n_j$ distinct points $z$ so that $f(z) = w_m$. 
    Thus, $f(z) = w_m$ has at least
    \begin{equation*}
        n_1+n_2+\cdots+n_k +1 = n+1
    \end{equation*}
    solutions.  This is a contradiction.
\end{proof}

\section{Approximation theorems}

If $f$ is analytic on $\D$ and can be uniformly approximated on $\D$ by a sequence of finite Blaschke products, 
then it is uniformly continuous on $\D$.  Consequently, $f$ has a unique continuous extension to $\D^-$ and, moreover,
this extension is unimodular on $\T$.  So $f$ is itself a finite Blaschke product by  
Corollary \ref{C:cc-extmfinb}.  Consequently, the finite Blaschke products form a proper closed subset of 
\begin{equation*}
    \mathcal{B}_{H^\infty} = \big\{f \in H^{\infty}(\D): \sup_{z \in \D} |f(z)| \leqslant 1\big\},
\end{equation*}
the closed unit ball of $H^{\infty}$.

Since a generic element of $\mathcal{B}_{H^\infty}$ need not be continuous on $\T$,
we cannot expect uniform approximation by finite Blaschke products.  On the other hand, 
if we endow $H^\infty$ with the topology of uniform convergence on compact sets, which is coarser than the norm topology,
then the finite Blaschke products are dense in $\mathcal{B}_{H^\infty}$.

\begin{Theorem}[Carath\'eodory \cite{MR0064861}] \label{T:caratheodory-fbp}
    For each $f \in \mathcal{B}_{H^\infty}$ there is a sequence of finite Blaschke products that 
    converges uniformly on compact subsets of $\D$ to $f$.
\end{Theorem}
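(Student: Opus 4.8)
The plan is to reduce the statement to a coefficient-matching problem and then solve that problem with the classical Schur algorithm. First I would record the elementary tail estimate: if $h \in \mathcal{B}_{H^\infty}$, then every Taylor coefficient of $h$, which I write $\widehat{h}(k)$, satisfies $|\widehat{h}(k)| \leq 1$ (integrate $h$ against $e^{-ik\theta}$ on $|z| = r$ and let $r \to 1^-$). Consequently, if $B$ is \emph{any} finite Blaschke product whose first $N$ Taylor coefficients agree with those of $f$, then on the disk $|z| \leq \rho$,
\begin{equation*}
    |f(z) - B(z)| \leq \sum_{k \geq N} |\widehat{f}(k) - \widehat{B}(k)|\, \rho^k \leq \frac{2\rho^N}{1-\rho}.
\end{equation*}
Thus it suffices to produce, for each $N$, a finite Blaschke product $B_N$ matching $f$ to order $N$; then $B_N \to f$ uniformly on every compact subset of $\D$.

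The engine for the matching is the Schur algorithm. Set $f_0 = f$ and $\gamma_0 = f_0(0)$. Since $|f_0| \leq 1$ on $\D$, the Schwarz Lemma (the tool behind Theorem \ref{T:automorph-disc}) shows that either $|\gamma_0| = 1$, in which case the Maximum Modulus Principle forces $f \equiv \gamma_0$, a unimodular constant and hence a finite Blaschke product, or else $|\gamma_0| < 1$ and
\begin{equation*}
    f_1(z) = \frac{1}{z}\cdot \frac{f_0(z) - \gamma_0}{1 - \overline{\gamma_0}\, f_0(z)}
\end{equation*}
is again a member of $\mathcal{B}_{H^\infty}$. Iterating yields Schur parameters $\gamma_0, \gamma_1, \ldots$; if some $|\gamma_j| = 1$ the process halts and $f$ is already a finite Blaschke product, so I may assume $|\gamma_j| < 1$ for all $j$.

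To build $B_N$ I would run the recursion backwards from a trivial Blaschke product. Put $g_N \equiv 1$ and define, for $j = N-1, N-2, \ldots, 0$,
\begin{equation*}
    g_j(z) = \frac{\gamma_j + z\, g_{j+1}(z)}{1 + \overline{\gamma_j}\, z\, g_{j+1}(z)}.
\end{equation*}
Each $z\, g_{j+1}(z)$ is a finite Blaschke product of degree one greater than that of $g_{j+1}$, and since $|\gamma_j| < 1$ the map $w \mapsto (\gamma_j + w)/(1 + \overline{\gamma_j}\, w)$ is an automorphism of $\D$; by the conformal invariance and composition results (Lemma \ref{L:balschke-conformal} and Theorem \ref{T:t-balschke-composition}), each $g_j$ is a finite Blaschke product, and inductively $B_N := g_0$ has degree $N$.

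The crux — and the step I expect to be the main obstacle — is to verify that $B_N$ matches $f$ to order $N$, i.e. that the first $N$ Taylor coefficients of the reconstruction depend only on $\gamma_0, \ldots, \gamma_{N-1}$ and not on the choice of tail. The point is the \emph{causality} of the Schur recursion: $f_j$ and $g_j$ satisfy the \emph{same} backward relation with the same parameters $\gamma_j$ but different starting values $f_N$ and $g_N \equiv 1$. I would prove by downward induction that $g_j - f_j = O(z^{N-j})$, the base case $j = N$ being trivial and the inductive step following because multiplying $g_{j+1} - f_{j+1} = O(z^{N-j-1})$ by $z$ gives $O(z^{N-j})$, which the Lipschitz Möbius map $w \mapsto (\gamma_j + w)/(1+\overline{\gamma_j}w)$ preserves. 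At $j = 0$ this reads $B_N - f = O(z^N)$, matching the first $N$ coefficients. Making this bookkeeping precise is the technical heart of the argument; once it is in place, the tail estimate of the first paragraph completes the proof.
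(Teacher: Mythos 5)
Your proposal is correct and is essentially the paper's own argument: the paper also runs the Schur algorithm (peeling off $c_0=f(0)$ via $g(z)=\tau_{c_0}(f(z))/z$, inducting on the order of tangency at the origin, and using the identity $\tau_{c_0}(z_2)-\tau_{c_0}(z_1)=\frac{(1-|c_0|^2)(z_1-z_2)}{(1-\overline{c}_0z_1)(1-\overline{c}_0z_2)}$ to gain one order of vanishing per step), then concludes by the Schwarz Lemma exactly as your tail estimate does. Your forward-decomposition-plus-backward-reconstruction phrasing unwinds to the same construction and the same key computation, so the two proofs differ only in bookkeeping.
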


\begin{proof}
    It suffices to show that for each $f \in \mathcal{B}_{H^{\infty}}$ and $n \geq 1$, there is a 
    finite Blaschke product $B_n$ so that $f- B_n$ has a zero of order at least $n$ at the origin.
    If we can show this, then the Schwarz Lemma will ensure that
    \begin{equation*}
        |f(z) - B_n(z)| \leq 2 |z|^{n+1}
    \end{equation*}
    on $\D$, so $B_n \to f$ uniformly on compact subsets of $\D$.  
    
    We proceed by induction on $n$.
    For each $f \in \mathcal{B}_{H^\infty}$, we have $c_0 = f(0) \in \D^-$. 
    If $|c_0|=1$, then the Maximum Principle ensures that $f$ is a unimodular constant and there is nothing to prove.
    If $|c_0| < 1$, then $B_0(z) = -\tau_{-c_0}(z)$ is a finite Blaschke product so that $f - B_0$ vanishes at the origin.

    Suppose, for our induction hypothesis, that for each $f \in \mathcal{B}_{H^\infty}$ there
    is a finite Blaschke product $B_{n-1}$ so that $f - B_{n - 1}$ has a zero of order at least $n$ at the origin.
    Given $f \in \mathcal{B}_{H^\infty}$, the Schwarz Lemma ensures that
    \begin{equation*}
        g(z) = \frac{\tau_{c_0}(f(z))}{z}
    \end{equation*}
    belongs to $\mathcal{B}_{H^\infty}$.  Hence there is a finite Blaschke product $B_{n-1}$ 
    so that $g-B_{n-1}$ has a zero of of order at least $n$ at the origin. Since
    \begin{equation*}
        f(z) = \tau_{c_0}(z g(z)), 
    \end{equation*}
    and since
    \begin{equation*}
        B_n(z) = \tau_{c_0}(zB_{n-1}(z)), 
    \end{equation*}
    is a finite Blaschke product by Lemma \ref{L:balschke-conformal}, we expect that $B_n$ has the desired properties. To establish this, it is enough to observe that
    \begin{equation*}
        \tau_{c_0}(z_2) - \tau_{c_0}(z_1) = \frac{(1-|c_0|^2)  (z_1-z_2)}{(1-\overline{c}_0 z_1)  (1-\overline{c}_0 z_2)}
    \end{equation*}
    for $z_1,z_2\in \D$.
    Because of the factor $z(g(z)-B_{n-1}(z))$, the difference
    \begin{equation*}
        f(z) - B_n(z) = \tau_{c_0}\big( zg(z) ) - \tau_{c_0}(zB_{n-1}(z) \big)
    \end{equation*}
    is divisible by $z^{n+1}$.
\end{proof}

Let us return to the norm topology. Since each finite Blaschke product belongs to $\mathcal{B}_{\mathcal{A}(\D)}$, the closed unit ball of the disk algebra $\mathcal{A}(\D)$, 
any convex combination of finite Blaschke products also belongs to $\mathcal{B}_{\mathcal{A}(\D)}$.  In fact,
the convex hull of the set of all finite Blaschke products is dense in $\mathcal{B}_{\mathcal{A}(\D)}$.

\begin{Theorem}[Fisher \cite{MR0233995}] \label{T:fisher}
    Each $f \in \mathcal{B}_{\mathcal{A}(\D)}$ can be uniformly approximated on $\D^- $ by a sequence of convex combinations of finite Blaschke products.
\end{Theorem}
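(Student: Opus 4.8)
The plan is to prove that the supremum-norm-closed convex hull of the finite Blaschke products exhausts the whole ball $\mathcal{B}_{\mathcal{A}(\D)}$, working throughout with boundary values on $\T$ (on which the sup norm of a disk-algebra function is attained).

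First I would shrink the class of target functions. Given $f\in\mathcal{B}_{\mathcal{A}(\D)}$, the dilates $f_\rho(z)=f(\rho z)$ satisfy $\|f_\rho\|_{\D^-}\le\|f\|_{\D^-}\le 1$, are analytic on a neighborhood of $\D^-$, and converge to $f$ uniformly on $\D^-$ because $f$ is uniformly continuous there; multiplying afterward by a scalar $r<1$ strictly lowers the norm. So it suffices to approximate functions $f$ that are analytic on a neighborhood of $\D^-$ with $\|f\|_{\D^-}<1$. Next I would argue by Hahn--Banach separation in the real Banach space $\mathcal{A}(\D)$: if some $f_0\in\mathcal{B}_{\mathcal{A}(\D)}$ failed to lie in the norm-closed convex hull $K$ of the finite Blaschke products, there would be a bounded linear functional with $\Re\Lambda(f_0)>\sup_B\Re\Lambda(B)$, the supremum over all finite Blaschke products. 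Representing $\Lambda$ by a measure via the Riesz theorem, $\Lambda(g)=\int_{\T}g\,d\mu$ with $\mu\in M(\T)$, a contradiction follows once I establish the extremal identity
\[
\sup\Big\{\Re\!\int_{\T}B\,d\mu:\ B\text{ a finite Blaschke product}\Big\}=\sup\Big\{\Re\!\int_{\T}g\,d\mu:\ g\in\mathcal{B}_{\mathcal{A}(\D)}\Big\}
\]
for every $\mu\in M(\T)$; the inequality $\le$ is automatic.

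The identity is transparent, and pleasantly so, when $\mu$ has analytic polynomial density $d\mu=\overline{P}\,dm$ with $P(z)=\sum_{k=0}^{N}c_k z^k$ and $dm$ normalized arc length. Then $\int_{\T}g\,d\mu=\sum_{k=0}^{N}\overline{c_k}\,\widehat{g}(k)$ depends only on the first $N+1$ Taylor coefficients of $g$, and Carath\'eodory's theorem (Theorem \ref{T:caratheodory-fbp}) does exactly the right work: its construction produces, for any $g\in\mathcal{B}_{\mathcal{A}(\D)}\subseteq\mathcal{B}_{H^\infty}$, a finite Blaschke product $B$ with $g-B$ vanishing to order $N+1$ at the origin, i.e.\ $\widehat{B}(k)=\widehat{g}(k)$ for $0\le k\le N$. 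Hence $\int_{\T}B\,d\mu=\int_{\T}g\,d\mu$ exactly, and the two suprema coincide.

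The main obstacle is the passage to a general $\mu$, where $\Lambda$ feels all Taylor coefficients of $g$ while the pairing lives on $\T$, so the interior coefficient matching above no longer controls $\int_{\T}B\,d\mu$; the difficulty is genuine because absolutely continuous (let alone polynomial-symbol) functionals are not norm-dense in $\mathcal{A}(\D)^{*}\cong M(\T)/H^1_0$, so a naive approximation of $\mu$ cannot succeed. Here I would invoke the dual extremal problem: the right-hand supremum equals the quotient norm $\operatorname{dist}\!\big(\mu,H^1_0\big)$, and the F.\ and M.\ Riesz theorem together with the classical extremal theory shows that a solving function is unimodular almost everywhere on $\T$; when $\mu$ is regular enough this extremal function is continuous and unimodular on $\T$, hence a finite Blaschke product by Corollary \ref{C:cc-extmfinb}. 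Reconciling this with the singular part of $\mu$, and in particular bridging the gap between the weak-$*$ density of finite Blaschke products in $\mathcal{B}_{H^\infty}$ that Theorem \ref{T:caratheodory-fbp} already supplies and the genuine norm control demanded here, is the crux of the argument and the step I expect to require the most care.
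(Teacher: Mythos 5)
There is a genuine gap, and you have correctly located it yourself: the extremal identity $\sup_B \Re\int_\T B\,d\mu = \sup_{g\in\mathcal{B}_{\mathcal{A}(\D)}}\Re\int_\T g\,d\mu$ for an arbitrary $\mu\in M(\T)$ is never established, and it is not a technicality that "requires care" --- via the very Hahn--Banach separation you set up, that identity for all $\mu$ is \emph{equivalent} to the theorem itself, so deferring it to "dual extremal theory" is close to circular. Your polynomial-symbol case is handled correctly by Carath\'eodory's coefficient matching, and more generally weak-$*$ density of the finite Blaschke products in $\mathcal{B}_{H^\infty}$ disposes of absolutely continuous functionals; but, as you note, the quotient $M(\T)/H^1_0$ contains the singular measures at full distance from $L^1/H^1_0$, and for a singular $\mu$ the pairing $B\mapsto\int_\T B\,d\mu$ is not controlled by convergence on compact subsets of $\D$. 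The appeal to the dual extremal problem does not close this: the extremal function for $\operatorname{dist}(\mu,H^1_0)$ being unimodular a.e.\ does not make it continuous, let alone a finite Blaschke product, for general $\mu$, and no argument is offered for reconciling the singular part. So the proof is incomplete at exactly the point where all the difficulty sits.

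For comparison, the paper's proof avoids duality entirely and is short and constructive. It writes $f_t(z)=f(tz)$, uses uniform continuity to get $\|f-f_t\|_\infty<\epsilon/2$, uses Carath\'eodory's theorem on the compact set $\{|z|\le t\}$ to get a finite Blaschke product $B$ with $\|f_t-B_t\|_\infty<\epsilon/2$, and then proves the one real lemma: \emph{the dilation $B_t$ of a finite Blaschke product is itself a convex combination of finite Blaschke products}. Since $(gh)_t=g_th_t$ and the convex combinations of finite Blaschke products are closed under multiplication, this reduces to a single factor $(\alpha-z)/(1-\overline{\alpha}z)$, for which the paper exhibits an explicit convex combination of a Blaschke factor and three unimodular constants. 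That dilation lemma is the idea your proposal is missing; if you want to salvage the duality route instead, you would need an independent proof of the extremal identity for singular $\mu$, which is the hard content of Fisher's theorem rather than a corollary of known machinery.
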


\begin{proof}
Fix $f \in \mathcal{B}_{\mathcal{A}(\D)}$ and $\epsilon > 0$.
Since $f$ is continuous on $\D^- $, there is a $t \in [0,1)$ so that
\begin{equation} \label{E:tfttofunif}
    \|f_t-f\|_\infty < \frac{\epsilon}{2},
\end{equation}
in which $f_t(z) = f(tz)$ for $z \in \D$. 
Theorem \ref{T:caratheodory-fbp} provides a finite Blaschke product $B$ so that
\begin{equation*}
\|f_t - B_t \|_\infty < \frac{\epsilon}{2}.
\end{equation*}
Thus we have a finite Blaschke product $B$ such that
\begin{equation*}
\|f - B_t \|_\infty < \epsilon.
\end{equation*}
If we can show that $B_t$ is itself a convex combination of finite Blaschke products, the proof will be complete.

Since $(gh)_t = g_t h_t$ for any analytic functions $g$ and $h$ on $\D$, 
the family of convex combinations of finite Blaschke products is closed under multiplication. 
Hence we just need to focus our efforts on 
\begin{equation*}
    B(z) = \frac{\alpha-z}{1-\overline{\alpha}  z}.
\end{equation*}
The combination on the right-hand side of the following
\begin{equation} \label{E:tbtcombination}
    B_t(z) = \frac{t(1-|\alpha|^2)}{1-|\alpha|^2  t^2} 
    \cdot \frac{\alpha t -z}{1-\overline{\alpha}t  z} + \frac{|\alpha|  (1-t^2)}{1-|\alpha|^2  t^2} \cdot e^{i\arg \alpha}
\end{equation}
is almost what we want; it is a linear combination, with positive coefficients, of a Blaschke factor and a unimodular constant.  
However, the coefficients need not sum to one. Fortunately,
\begin{equation*}
1-\frac{t(1-|\alpha|^2)}{1-|\alpha|^2 t^2} - \frac{|\alpha| (1-t^2)}{1-|\alpha|^2 t^2} = \frac{(1-t)(1-|\alpha|)}{1+|\alpha|t}> 0,
\end{equation*}
so we add the combination
\begin{equation*}
    0 = \frac{(1-t)(1-|\alpha|)}{2(1+|\alpha|t)} \cdot 1 + \frac{(1-t)(1-|\alpha|)}{2(1+|\alpha|t)} \cdot (-1)
\end{equation*}
to both sides of \eqref{E:tbtcombination} to obtain
\begin{align*}
B_t(z) &= \frac{t(1-|\alpha|^2)}{1-|\alpha|^2 t^2} \cdot \frac{\alpha t -z}{1-\overline{\alpha}t z} + \frac{|\alpha| (1-t^2)}{1-|\alpha|^2 t^2} \cdot e^{i\arg \alpha}\\
&+ \frac{(1-t)(1-|\alpha|)}{2(1+|\alpha|t)} \cdot 1 + \frac{(1-t)(1-|\alpha|)}{2(1+|\alpha|t)} \cdot (-1),
\end{align*}
which is a convex combination of four finite Blaschke products (three of them are unimodular constants). 
\end{proof}

If $B_1$ and $B_2$ are finite Blaschke products, then $B_1/B_2$ is a
continuous unimodular function on $\T$.   The family of all such quotients is uniformly dense in the
set of continuous unimodular functions.
A weaker version of the following result (Theorem \ref{T:helson-sarason} below) is Corollary \ref{aoirhfc}, in which $u$ is the unimodular boundary function for 
a meromorphic function on $\D$ with a finite number of zeros and poles. 

\begin{Theorem}[Helson--Sarason \cite{MR0236989}] \label{T:helson-sarason}
    Let $u$ be any continuous unimodular function on the unit
    circle $\T$ and let $\epsilon>0$. Then there are finite
    Blaschke products $B_1$ and $B_2$ such that
    \begin{equation*}
    \bigg\| u - \frac{B_1}{B_2}\bigg\|_{L^{\infty}(\T)} <
    \epsilon.
    \end{equation*}
\end{Theorem}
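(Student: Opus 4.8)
The plan is to reduce $u$, up to an arbitrarily small uniform error, to the boundary values of a single rational function that is unimodular on $\T$ and meromorphic on $\D$ with only finitely many zeros and poles, and then to quote Corollary \ref{aoirhfc}. First I would remove the winding number. A continuous unimodular $u:\T\to\T$ has a well-defined winding number $m\in\Z$, and $\zeta\mapsto\zeta^{-m}u(\zeta)$ has winding number zero, so it lifts through $\exp$ to a single-valued continuous real function; that is, $u(\zeta)=\zeta^m e^{ig(\zeta)}$ for some continuous $g:\T\to\R$. Since $\zeta^m$ (or $1/\zeta^{|m|}$ when $m<0$) is a quotient of finite Blaschke products, and such quotients are closed under multiplication, it suffices to approximate $e^{ig}$.

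Next I would smooth $g$. By Fejér's theorem I can choose a real trigonometric polynomial $p$ with $\|g-p\|_\infty$ as small as I like, whence $\|e^{ig}-e^{ip}\|_\infty\le\|g-p\|_\infty$. Writing $p=c_0+2\Re q$ with $c_0\in\R$ and $q(\zeta)=\sum_{k=1}^{N}c_k\zeta^k$ an analytic polynomial with $q(0)=0$, and setting $h=e^{iq}$ (entire and zero-free), a one-line computation on $\T$, using $\overline{q(\zeta)}=\sum_k\overline{c_k}\zeta^{-k}$, gives $e^{ip}=e^{ic_0}\,h/\overline{h}$ there.

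The crux is to replace the transcendental factor $h$ by a rational one. Since $h$ is analytic and zero-free on a neighborhood of $\overline{\D}$, its Taylor partial sums converge to it uniformly on $\overline{\D}$; choosing one of them, say $P$, sufficiently close guarantees that $P$ is still zero-free on $\overline{\D}$ while $\|h-P\|_{\infty,\T}$ is small. As $h$ is bounded below on the compact set $\T$, the elementary estimate $\bigl|h/\overline{h}-P/\overline{P}\bigr|\le 2\|h-P\|_{\infty,\T}/\min_{\T}|P|$ shows that $e^{ic_0}P/\overline{P}$ is uniformly close to $e^{ip}$ on $\T$. I would then identify $v:=\gamma\,\zeta^{m}\,P/\overline{P}$, with $\gamma=e^{ic_0}$, as a Blaschke quotient. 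Writing $d=\deg P$ and letting $\tilde P(z)=z^{d}\,\overline{P(1/\overline z)}$ be the reciprocal polynomial, one has $\overline{P(\zeta)}=\zeta^{-d}\tilde P(\zeta)$ on $\T$; the roots of $\tilde P$ are the reflections $1/\overline{w_i}$ of the roots $w_i$ of $P$, and since every $w_i$ lies in $|z|>1$, every root of $\tilde P$ lies in $\D$. Hence $v$ is the boundary restriction of the rational function $R(z)=\gamma\,z^{m+d}\,P(z)/\tilde P(z)$, which is meromorphic on $\D$ with finitely many zeros and poles, is continuous on $\T$ (its poles are strictly interior), and is unimodular there. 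Corollary \ref{aoirhfc} then writes $R$ as a quotient $B_1/B_2$ of finite Blaschke products, and combining the two approximation errors yields $\|u-B_1/B_2\|_{L^\infty(\T)}<\epsilon$.

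I expect the main obstacle to be conceptual rather than computational: $e^{ip}$ is itself almost never a finite Blaschke quotient, since it is transcendental in $\zeta$, so one cannot apply Corollary \ref{aoirhfc} to $e^{ip}$ directly. The essential move is to approximate the zero-free analytic factor $h=e^{iq}$ by a zero-free polynomial $P$, after which $P/\overline{P}$ becomes a genuine rational unimodular function to which Corollary \ref{aoirhfc} applies. Keeping the denominators bounded away from zero — automatic because $h$ is zero-free on the compact circle — is exactly what makes the passage from $h/\overline{h}$ to $P/\overline{P}$ stable, and ensuring that $P$ inherits the zero-freeness of $h$ is the small point one must not skip.
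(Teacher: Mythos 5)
The paper states this theorem with only a citation to Helson--Sarason and provides no proof, so there is no internal argument to compare yours against; I can only assess your proposal on its own terms, and it is correct and essentially complete. The reduction $u(\zeta)=\zeta^m e^{ig(\zeta)}$ via the winding number, the passage $g\mapsto p\mapsto e^{ip}=e^{ic_0}h/\overline{h}$ with $h=e^{iq}$, and the replacement of the zero-free entire function $h$ by a Taylor partial sum $P$ that remains zero-free on $\D^-$ are all sound. The stability estimate checks out: writing $h\overline{P}-P\overline{h}=h(\overline{P}-\overline{h})+\overline{h}(h-P)$ gives $|h/\overline{h}-P/\overline{P}|\leq 2\|h-P\|_{\infty,\T}/\min_{\T}|P|$ as you claim. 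The identification $\overline{P(\zeta)}=\zeta^{-d}\tilde{P}(\zeta)$ on $\T$, with every root of $\tilde{P}$ the reflection into $\D$ of a root of $P$ lying outside $\D^-$, correctly exhibits $\gamma\zeta^{m+d}P/\tilde{P}$ as the boundary function of a function meromorphic on $\D$ (the possible pole at the origin when $m+d<0$ is harmless), continuous and unimodular on $\T$, to which Corollary \ref{aoirhfc} applies. This is precisely the role the authors assign to Corollary \ref{aoirhfc} in the sentence preceding the theorem, where they describe it as the special case of a unimodular boundary function of a meromorphic function with finitely many zeros and poles; your argument supplies the approximation step that upgrades that special case to the general continuous unimodular function, and you correctly flag the one point that cannot be skipped, namely that $e^{ip}$ itself is transcendental and must first be replaced by the rational function $P/\overline{P}$ before the corollary can be invoked.
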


In the preceding theorems, the approximating Blaschke products may have repeated zeros.  This restriction can be overcome. 

\begin{Theorem} \label{T:approx-famil-distinct}
    Let $B$ be a finite Blaschke product of order $n$. 
    Then there is a family of finite Blaschke products $\{B_\epsilon$: $0<\epsilon<\epsilon_0\}$ with the following properties:
    \begin{enumerate}
        \item each $B_\epsilon$ is of order $n$;
        \item each $B_\epsilon$ has distinct zeros;
        \item for each $\epsilon$, $B_\epsilon(0) \neq 0$ and $B_\epsilon'(0) \neq 0$;
        \item as $\epsilon \to 0$, $B_\epsilon$ converges uniformly to $B$ 
        on any compact subset of $\C$ that does not contain a pole of $B$. In particular, $B_\epsilon$ converges uniformly to $B$ on $\D^- $.
    \end{enumerate}
\end{Theorem}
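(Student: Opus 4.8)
\section*{Proof proposal}

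The plan is to perturb the zeros of $B$ by a small generic amount, so that they separate and move off the origin, and then to check that the one genuinely nontrivial condition, namely $B_\epsilon'(0)\neq0$, survives. Using the representation \eqref{eq:FBPB}, write
\[
B = e^{i\alpha}\prod_{k=1}^{n}\tau_{z_k},\qquad \tau_{z_k}(z)=\frac{z_k-z}{1-\overline{z_k}\,z},
\]
where $z_1,\dots,z_n\in\D$ are the zeros of $B$ listed with multiplicity. I would fix distinct, nonzero complex numbers $c_1,\dots,c_n$ (subject to one further genericity constraint imposed in the last paragraph), set $z_k(\epsilon)=z_k+\epsilon c_k$, and define
\[
B_\epsilon=e^{i\alpha}\prod_{k=1}^{n}\tau_{z_k(\epsilon)}.
\]

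The conditions (a), (b), the nonvanishing half of (c), and (d) are then each a matter of avoiding finitely many bad values of $\epsilon$. For (a), since each $z_k$ lies in the open disk, there is an $\epsilon_0>0$ with $z_k(\epsilon)\in\D$ for all $k$ and all $0<\epsilon<\epsilon_0$, so $B_\epsilon$ is a finite Blaschke product of order $n$. For (b), if $z_k\neq z_j$ then $z_k(\epsilon)=z_j(\epsilon)$ holds only at the single value $\epsilon=(z_k-z_j)/(c_j-c_k)$, whereas if $z_k=z_j$ then coincidence forces $c_k=c_j$, which is excluded; shrinking $\epsilon_0$ below the finitely many offending values makes the $z_k(\epsilon)$ distinct. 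For $B_\epsilon(0)\neq0$, note $z_k(\epsilon)=0$ occurs at most at $\epsilon=-z_k/c_k$ when $z_k\neq0$, and never when $z_k=0$ (then $z_k(\epsilon)=\epsilon c_k\neq0$), so a further shrinking of $\epsilon_0$ suffices. For (d), on a compact set $E\subset\C$ containing no pole of $B$ the poles $1/\overline{z_k(\epsilon)}$ of $B_\epsilon$ converge to the poles of $B$ and stay off $E$ for small $\epsilon$; each factor $\tau_{z_k(\epsilon)}$ then converges uniformly to $\tau_{z_k}$ on $E$, so $B_\epsilon\to B$ uniformly on $E$. Since the poles of $B$ lie at the points $1/\overline{z_k}$ outside $\D^-$, this includes uniform convergence on $\D^-$.

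The delicate point, which I expect to be the main obstacle, is the remaining half of (c): $B_\epsilon'(0)\neq0$. Logarithmic differentiation of the product gives, whenever $B_\epsilon(0)\neq0$,
\[
\frac{B_\epsilon'(0)}{B_\epsilon(0)}=-\sum_{k=1}^{n}\frac{1-|z_k(\epsilon)|^2}{z_k(\epsilon)}=:-S(\epsilon),
\]
so it suffices to arrange $S(\epsilon)\neq0$ on $(0,\epsilon_0)$. I would split into two cases. If some zeros of $B$ sit at the origin, say $z_k=0$ for $k$ in an index set $I$ of size $K\geq1$, those terms contribute $\epsilon^{-1}\sum_{k\in I}c_k^{-1}$ while the rest stay bounded, so $S(\epsilon)=\epsilon^{-1}\sum_{k\in I}c_k^{-1}+O(1)$; choosing the $c_k$ with $k\in I$ to be distinct positive reals makes $\sum_{k\in I}c_k^{-1}\neq0$, whence $|S(\epsilon)|\to\infty$ and $S(\epsilon)\neq0$ for small $\epsilon$. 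If no zero of $B$ lies at the origin, then $S$ is real-analytic in $\epsilon$ near $0$; when $B'(0)\neq0$ we already have $S(0)\neq0$ and continuity finishes the job, and when $B'(0)=0$ the crux is to verify $S\not\equiv0$. Here a short computation shows that $S'(0)$, viewed as a function of the direction vector $(c_1,\dots,c_n)$, is nonconstant (each variable $c_k$ enters through a term $-\overline{c_k}$ with nonzero coefficient), so a generic admissible choice of the $c_k$ yields $S'(0)\neq0$; then $S\not\equiv0$, its zeros are isolated, and a final shrinking of $\epsilon_0$ removes them from $(0,\epsilon_0)$. All the genericity constraints on the $c_k$ used above are simultaneously achievable, and this completes the construction.
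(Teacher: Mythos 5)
The paper states this theorem without proof (it is one of the ``more technical proofs'' the authors omit), so there is nothing to compare against; judged on its own, your argument is correct and complete. The routine parts (a), (b), the first half of (c), and (d) are handled exactly as one would expect by perturbing the zeros along generic directions and shrinking $\epsilon_0$ past finitely many bad values, and you correctly isolate $B_\epsilon'(0)\neq 0$ as the only condition requiring real work. Your formula $B_\epsilon'(0)/B_\epsilon(0)=-\sum_k\bigl(1-|z_k(\epsilon)|^2\bigr)/z_k(\epsilon)$ agrees with the paper's \eqref{E:estb'1-0-fin} evaluated at the origin, and both cases check out: when $K\geq 1$ zeros sit at the origin the $\epsilon^{-1}\sum_{k\in I}c_k^{-1}$ blow-up forces $S(\epsilon)\neq 0$, and when none do, the expansion
\begin{equation*}
S'(0)=\sum_{k=1}^{n}\left[-\Bigl(\frac{\overline{z_k}}{z_k}+\frac{1-|z_k|^2}{z_k^2}\Bigr)c_k-\overline{c_k}\right]
\end{equation*}
is indeed a nonzero $\R$-linear functional of $(c_1,\dots,c_n)$ because of the $-\overline{c_k}$ terms (a map $c\mapsto\alpha c+\beta\overline{c}$ with $\beta=-1$ cannot vanish identically), so a generic admissible choice gives $S(0)=0$, $S'(0)\neq 0$ and hence $S\neq 0$ on a punctured interval. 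The only cosmetic imprecision is in (d): when $z_k=0$ the corresponding factor of $B$ is $-z$ with no finite pole, and the pole $1/(\epsilon\overline{c_k})$ of the perturbed factor escapes to infinity rather than ``converging to a pole of $B$''; the conclusion that it leaves any fixed compact subset of $\C$ is still right, so uniform convergence on such sets, and in particular on $\D^-$, holds as claimed.
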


%%%%%%%%%%%%%%%%%%%%%%%%%%%%%%%%%%%%%%%%%%%%%%%%%%%%%%%%%%%%%%%%%
%%%%%%%%%%%%%%%%%%%%%%%%%%%%%%%%%%%%%%%%%%%%%%%%%%%%%%%%%%%%%%%%%
%%%%%%%%%%%%%%%%%%%%%%%%% NEW SECTION %%%%%%%%%%%%%%%%%%%%%%%%%%%
%%%%%%%%%%%%%%%%%%%%%%%%%%%%%%%%%%%%%%%%%%%%%%%%%%%%%%%%%%%%%%%%%
%%%%%%%%%%%%%%%%%%%%%%%%%%%%%%%%%%%%%%%%%%%%%%%%%%%%%%%%%%%%%%%%%

\section{A generalized Rouch\'e theorem and its converse}
A classical theorem of Rouch\'e says that if $f$ and $g$ are analytic inside and on a closed rectifiable curve $\Gamma$ and if $|f+g|<|g|$ on $\Gamma$, 
then $f$ and $g$ have the same number of zeros inside $\Gamma$.  A stronger version is:

\begin{Theorem}[Glicksberg \cite{MR0396919}] \label{T:roche}
    If $f$ and $g$ are analytic inside and on a simple closed rectifiable curve $\Gamma$ and if
    \begin{equation*}
        \text{$|f+g|<|f|+|g|$ on $\Gamma$},
    \end{equation*}
    then $f$ and $g$ have the same number of zeros inside $\Gamma$.
\end{Theorem}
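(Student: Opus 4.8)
The plan is to reduce the hypothesis to a statement about the quotient $h = f/g$ and then apply the argument principle. First I would note that the strict inequality forces both $f$ and $g$ to be nonvanishing on $\Gamma$: if $f(z_0) = 0$ for some $z_0 \in \Gamma$, then $|f+g|(z_0) = |g(z_0)| = (|f|+|g|)(z_0)$, contradicting the hypothesis, and the symmetric argument rules out zeros of $g$ on $\Gamma$. Hence $h = f/g$ is well defined, finite, and nonzero on $\Gamma$, and dividing $|f+g| < |f|+|g|$ by $|g|$ yields $|h+1| < |h|+1$ at every point of $\Gamma$.

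Next I would extract the geometric content of this pointwise inequality. For a complex number $w$, the triangle inequality $|w+1| \leq |w|+1$ is an equality precisely when $w \in [0,\infty)$; thus $|h+1| < |h|+1$ says exactly that $h(z) \notin [0,\infty)$ for every $z \in \Gamma$. In other words, the image curve $h(\Gamma)$ lies in the slit plane $\C \setminus [0,\infty)$. This is the decisive observation, and it is where the strengthened hypothesis (as opposed to the classical $|f+g|<|g|$, which only confines $h$ to a disc missing the origin) does its work.

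Since $f$ and $g$ are analytic inside and on $\Gamma$ and nonvanishing on $\Gamma$, the argument principle applies to each. Writing $N_f$ and $N_g$ for the numbers of zeros of $f$ and $g$ inside $\Gamma$, counted with multiplicity, I would subtract the two instances to obtain
\[
    N_f - N_g = \frac{1}{2\pi i} \oint_\Gamma \left( \frac{f'}{f} - \frac{g'}{g} \right) dz = \frac{1}{2\pi i} \oint_\Gamma \frac{h'}{h}\, dz,
\]
which is precisely the winding number of the image curve $h(\Gamma)$ about the origin.

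It remains to show that this winding number is zero, and this is the main obstacle. The ray $[0,\infty)$ is a connected, unbounded set that contains $0$ and, by the previous step, is disjoint from $h(\Gamma)$; therefore $0$ lies in the unbounded component of $\C \setminus h(\Gamma)$, so $h(\Gamma)$ has winding number $0$ about $0$. Equivalently, the slit plane $\C \setminus [0,\infty)$ is simply connected and omits the origin, hence carries a single-valued continuous branch of the argument, and the net change of $\arg h$ as $z$ traverses the closed curve $\Gamma$ is $0$. The delicate point is to make this rigorous for a merely rectifiable curve $\Gamma$, where one relies on the continuity of $h$ on $\Gamma$ together with the topological fact that a closed curve confined to the slit plane cannot separate $0$ from $\infty$. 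Either formulation gives $N_f = N_g$, completing the proof.
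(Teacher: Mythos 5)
The paper states this theorem without proof, citing Glicksberg, so there is no in-text argument to compare against; judged on its own, your proof is correct and is essentially Glicksberg's original argument. The two key steps are both right: the strict inequality forces $f$ and $g$ to be zero-free on $\Gamma$, and the equality case of the triangle inequality shows $|h+1|<|h|+1$ is equivalent to $h(z)\notin[0,\infty)$, so $h(\Gamma)$ lies in the slit plane. For the final step, the cleanest way to dispose of the ``rectifiable curve'' worry you raise is to note that $h$ is analytic and nonvanishing on an open neighborhood of $\Gamma$ whose image under $h$ still avoids $[0,\infty)$ (by continuity and compactness), so $\log\circ h$ is a single-valued analytic function there with derivative $h'/h$; hence $\oint_\Gamma h'/h\,dz=0$ because it is the integral of an exact differential over a closed curve, with no appeal to winding-number topology needed. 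Either that route or your unbounded-component argument closes the proof, and the identity $N_f-N_g=\frac{1}{2\pi i}\oint_\Gamma h'/h\,dz$ is a correct application of the argument principle to the meromorphic function $h$.
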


Suppose that $f$ and $g$ are analytic on $|z| < R$ for some $R>1$
and that they have no zeros on $\T$. Let $B_1$ and $B_2$ be finite
Blaschke products of the \emph{same} order. If
\begin{equation*}
|B_1f+B_2g|<|f|+|g|
\end{equation*}
on $\T$, then Theorem \ref{T:roche} ensures that $f$
and $g$ have the same number of zeros in $\D$. The
converse of this result is also true.

\begin{Theorem}[Challener--Rubel \cite{MR653504}]
    Suppose that $f$ and $g$ are analytic on $|z| < R$ for some $R>1$ and that they have no zeros on $\T$. If $f$ and $g$
    have the same number of zeros in $\D$, then there
    are finite Blaschke products $B_1$ and $B_2$ of the same order such that
    \begin{equation*}
        \text{$|B_1f+B_2g|<|f|+|g|$ on $\T$.}
    \end{equation*}
\end{Theorem}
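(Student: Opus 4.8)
The plan is to reduce the desired strict inequality to a statement about arguments on $\T$, and then to realize the required argument by uniformly approximating a carefully chosen unimodular function with a quotient of Blaschke products via the Helson--Sarason theorem (Theorem \ref{T:helson-sarason}). First I would record the elementary fact that for nonzero complex numbers $a,b$ one has $|a+b|<|a|+|b|$ unless $a/b\in(0,\infty)$. Since $f$ and $g$ have no zeros on $\T$ and $|B_1|=|B_2|=1$ there, neither $B_1f$ nor $B_2g$ vanishes on $\T$, and $|B_1f|+|B_2g|=|f|+|g|$ on $\T$. Hence the inequality $|B_1f+B_2g|<|f|+|g|$ on $\T$ is equivalent to requiring that
\[
\Phi:=\frac{B_1f}{B_2g}
\]
omit the positive real axis $(0,\infty)$ at every point of $\T$. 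The whole problem thus becomes: choose finite Blaschke products $B_1,B_2$ of \emph{equal} order so that $\Phi(\T)\cap(0,\infty)=\emptyset$.

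Next I would introduce the continuous unimodular target
\[
U:=-\frac{|f|\,g}{|g|\,f}\qquad\text{on }\T,
\]
designed so that $U\cdot(f/g)=-|f|/|g|<0$ identically. Let $Z_f$ and $Z_g$ denote the numbers of zeros of $f$ and $g$ in $\D$, so that $Z_f=Z_g$ by hypothesis. The crucial bookkeeping is the winding number: because $f$ and $g$ are analytic (hence pole-free) on $\{|z|<R\}$, have no zeros on $\T$, and satisfy $Z_f=Z_g$, the argument principle shows that $g/f$, and therefore $U$, has winding number $Z_g-Z_f=0$ about the origin as $\zeta$ traverses $\T$. I would then apply Theorem \ref{T:helson-sarason} to produce finite Blaschke products $B_1,B_2$ with $\|U-B_1/B_2\|_{L^\infty(\T)}<\tfrac12$.

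From this single approximation I would harvest both conclusions. For the orders: since $\|B_1/B_2-U\|_\infty<1=|U|$ on $\T$, a Rouch\'e argument gives that $B_1/B_2$ and $U$ have the same winding number about $0$; as the winding number of $B_1/B_2$ equals $\deg B_1-\deg B_2$ and that of $U$ is $0$, the two products have the same order. For the inequality: at each $\zeta\in\T$,
\[
\big|\Phi(\zeta)-U(\zeta)(f/g)(\zeta)\big|=\big|(B_1/B_2)(\zeta)-U(\zeta)\big|\,\big|f(\zeta)/g(\zeta)\big|<\tfrac12\,|f(\zeta)|/|g(\zeta)|,
\]
so $\Phi(\zeta)$ lies in the disk of radius $\tfrac12|f|/|g|$ centered at the negative real number $-|f|/|g|$; that disk lies entirely in the open left half-plane and so misses $(0,\infty)$. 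By the first step this gives $|B_1f+B_2g|<|f|+|g|$ on $\T$, completing the argument.

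I expect the only genuinely nontrivial point to be the matching of the orders. It is tempting to try to control $\deg B_1$ and $\deg B_2$ directly, which is awkward; the clean route is to recognize that the hypothesis ``same number of zeros'' is precisely what forces the target $U$ to have winding number zero, and that uniformly approximating a winding-number-zero unimodular function by a Blaschke quotient automatically yields equal orders. Everything else is the equality case of the triangle inequality together with the elementary disk estimate above.
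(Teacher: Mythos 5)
Your proof is correct, and at its core it is the same as the paper's: both arguments feed the identical unimodular target $-|f|g/(|g|f)$ (the paper's $-u$ with $u=h/|h|$, $h=g/f$) into the Helson--Sarason theorem (Theorem \ref{T:helson-sarason}) and then read off the two conclusions from the resulting Blaschke quotient $B_1/B_2$. The differences lie only in how the conclusions are extracted. For the inequality, the paper takes $\epsilon=m/M$ (with $m$ and $M$ the minimum and maximum of $|f|,|g|$ over $\T$) and runs the direct norm estimate $|B_1f+B_2g|\leq \bigl||f|-|g|\bigr|+m<|f|+|g|$, whereas you take $\epsilon=\tfrac12$ and argue geometrically through the equality case of the triangle inequality, placing $\Phi=B_1f/(B_2g)$ in a disk about the negative real number $-|f|/|g|$ that misses $(0,\infty)$; both work, and yours avoids introducing the constants $m,M$. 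For the equality of orders, the paper applies Glicksberg's form of Rouch\'e's theorem (Theorem \ref{T:roche}) to $B_1f$ and $B_2g$ using the inequality just proved, while you compare winding numbers of $B_1/B_2$ and of the target directly via the dog-on-a-leash lemma; these are two faces of the same argument-principle computation, though your version makes it more transparent that the hypothesis that $f$ and $g$ have equally many zeros in $\D$ is precisely the statement that the target has winding number zero.
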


\begin{proof}
    Since $f$ and $g$ are continuous and nonzero on $\T$,
    \begin{equation*}
        m = \min_{\zeta \in \T} \big\{ |f(\zeta)|, |g(\zeta)| \big\} >0
    \end{equation*}
    and
    \begin{equation*}
        M = \max_{\zeta \in \T} \big\{ |f(\zeta)|, |g(\zeta)| \big\} < \infty.
    \end{equation*}
    Let $h=g/f$ and $u = h/|h|$. Since $-u$ is a continuous
    unimodular function on $\T$, Theorem \ref{T:helson-sarason} provides 
    two finite Blaschke products $B_1$ and $B_2$ so that
    \begin{equation*}
        \bigg\| u + \frac{B_1}{B_2}\bigg\|_{L^{\infty}(\T)} < \frac{m}{M}.
    \end{equation*}
    On $\T$, we have
    \begin{align*}
        |B_1f+B_2g| 
        &= |f| \bigg| \frac{g}{f} + \frac{B_1}{B_2}\bigg|
        = |f| \bigg| h + \frac{B_1}{B_2}\bigg|\\
        &= |f| \bigg| u |h| + \frac{B_1}{B_2}\bigg| 
        = |f| \bigg| u (|h|-1) + \Big(u+\frac{B_1}{B_2}\Big)\bigg|\\
        &\leq \big| |f|-|g|\big| + m
         < |f| + |g|.
    \end{align*}
    Since $|f| = |B_1f|$ and $|g| = |B_2g|$ on $\T$, 
    Rouch\'e's Theorem ensures that $B_1f$ and $B_2g$ have the same number of zeros
    inside $\D$. Since $f$ and $g$ have the same
    number of zeros inside $\D$, the Blaschke products $B_1$ and $B_2$ must have the same order.
\end{proof}

%%%%%%%%%%%%%%%%%%%%%%%%%%%%%%%%%%%%%%%%%%%%%%%%%%%%%%%%%%%%%%%%%
%%%%%%%%%%%%%%%%%%%%%%%%%%%%%%%%%%%%%%%%%%%%%%%%%%%%%%%%%%%%%%%%%
%%%%%%%%%%%%%%%%%%%%%%%%% NEW CHAPTER %%%%%%%%%%%%%%%%%%%%%%%%%%%
%%%%%%%%%%%%%%%%%%%%%%%%%%%%%%%%%%%%%%%%%%%%%%%%%%%%%%%%%%%%%%%%%
%%%%%%%%%%%%%%%%%%%%%%%%%%%%%%%%%%%%%%%%%%%%%%%%%%%%%%%%%%%%%%%%%

\section{The derivative of a finite Blaschke product}

If
\begin{equation*}
    B(z) = \prod_{k=1}^{n} \frac{z_k-z}{1-\overline{z_k}  z}
\end{equation*}
and 
\begin{equation*}
    \qquad B_j(z) = \prod_{\substack{k=1\\k \neq j}}^{n} \frac{z_k-z}{1-\overline{z_k}  z}, \quad \text{for $1 \leq j \leq n$},
\end{equation*}
then a computation confirms that
\begin{equation} \label{E:estb'1-0-1-fin}
    B'(z) = -\sum_{k=1}^{n} \frac{1-|z_k|^2}{(1-\overline{z_k}  z)^2} B_k(z).
\end{equation}
In particular,  
\begin{equation*}
    \qquad B'(z_j) = -  \frac{1}{1-|z_j|^2} \prod_{\substack{k=1\\k \neq j}}^{n} \frac{z_k-z_j}{1-\overline{z_k}  z_j}, \quad \text{for $1 \leq j \leq n$}.
\end{equation*}
Divide both sides of \eqref{E:estb'1-0-1-fin} by $B$ to obtain the logarithmic derivative 
\begin{equation} \label{E:estb'1-0-fin}
    \frac{B'(z)}{B(z)} = \sum_{k=1}^{n}
    \frac{1-|z_k|^2}{(1-\overline{z_k}  z)(z-z_k)}
\end{equation}
of $B$; this is valid on $\C \backslash \big\{ z_k, 1/\overline{z_k}: 1 \leq k \leq n \big\}$. 
At each $e^{i\theta} \in \T$, we have
\begin{equation} \label{E:estb'1-0-2-fin}
    \frac{B'(e^{i\theta})}{e^{-i\theta} B(e^{i\theta})} = \sum_{k=1}^{n} \frac{1-|z_k|^2}{|e^{i\theta}-z_k|^2},
\end{equation}
and hence
\begin{equation} \label{E:estb'1-0-3-fin}
    |B'(e^{i\theta})| = \sum_{k=1}^{n} \frac{1-|z_k|^2}{|e^{i\theta}-z_k|^2}.
\end{equation}
The following result is a direct consequence of \eqref{E:estb'1-0-3-fin}.

\begin{Lemma} \label{L:nozerob'ont}
    If $B$ is a finite Blaschke product, then
    $B'(e^{i\theta}) \neq 0$ for all $e^{i\theta} \in \T$.
\end{Lemma}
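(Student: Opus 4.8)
The plan is to read off the conclusion directly from the boundary modulus formula \eqref{E:estb'1-0-3-fin}, which already does all of the analytic work. First I would recall that at every $e^{i\theta} \in \T$ that formula gives
\begin{equation*}
    |B'(e^{i\theta})| = \sum_{k=1}^{n} \frac{1-|z_k|^2}{|e^{i\theta}-z_k|^2},
\end{equation*}
so that the entire problem reduces to checking that the right-hand side is strictly positive.

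Next I would verify positivity term by term. Since each zero $z_k$ lies in $\D$, we have $|z_k| < 1$, whence the numerator satisfies $1 - |z_k|^2 > 0$. For the denominator, the point $e^{i\theta}$ lies on $\T$ while $z_k \in \D$, so $e^{i\theta} \neq z_k$ and therefore $|e^{i\theta} - z_k|^2 > 0$. Thus every summand is a well-defined, strictly positive real number. As $B$ is a nonconstant finite Blaschke product there is at least one factor (that is, $n \geq 1$), so the sum of these positive terms is strictly positive; hence $|B'(e^{i\theta})| > 0$ and $B'(e^{i\theta}) \neq 0$.

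One bookkeeping point I would address is that \eqref{E:estb'1-0-3-fin} was derived for a product normalized as in \eqref{eq:Bgenc}, without the leading unimodular constant $e^{i\alpha}$ appearing in the general form \eqref{eq:FBPB}. This causes no difficulty: multiplying $B$ by $e^{i\alpha}$ multiplies $B'$ by the same unimodular constant and so leaves $|B'(e^{i\theta})|$ unchanged, and a zero at the origin merely contributes the perfectly admissible positive term $1 = (1-0)/|e^{i\theta}|^2$. Consequently the formula, and hence the argument, applies verbatim to every finite Blaschke product of degree $n \geq 1$.

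There is essentially no obstacle here, which is precisely why the statement is flagged as a direct consequence of \eqref{E:estb'1-0-3-fin}: the substantive content was absorbed into establishing that formula, and all that remains is the elementary observation that a nonempty sum of strictly positive terms is strictly positive. If one wanted a proof independent of \eqref{E:estb'1-0-3-fin}, the natural alternative would be to show that $\theta \mapsto \arg B(e^{i\theta})$ is strictly increasing, equivalently that $B$ wraps $\T$ monotonically around $\T$ exactly $n$ times; but this route is strictly more laborious and ultimately rests on the same positive-sum computation.
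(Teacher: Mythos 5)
Your proof is correct and is exactly the argument the paper intends: the lemma is presented there as a direct consequence of \eqref{E:estb'1-0-3-fin}, with the strict positivity of each summand left implicit, and your bookkeeping about the unimodular constant and a zero at the origin only makes that explicit. (The one degenerate case, a constant finite Blaschke product with $n=0$, is rightly excluded by your remark that $n\geq 1$.)
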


In the light of the preceding lemma, we can slightly strengthen Theorem \ref{T:solution-n}
(it can also be deduced from Theorem \ref{L:angleont}).

\begin{Corollary} \label{T:solution-n-11}
    If $B$ is a finite Blaschke product of degree $n$, then for each $w \in \T$, the equation
    $B(z) = w$ has exactly $n$ distinct solutions on $\T$.
\end{Corollary}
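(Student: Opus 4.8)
The plan is to combine the solution count already furnished by Theorem \ref{T:solution-n} with the non-vanishing of $B'$ on $\T$ established in Lemma \ref{L:nozerob'ont}. For $w \in \T$, Theorem \ref{T:solution-n} tells us that the equation $B(z) = w$ has exactly $n$ solutions, all lying on $\T$, counted according to multiplicity. Thus the entire task reduces to showing that none of these solutions is repeated; equivalently, that $B(z) - w$ has no zero of multiplicity exceeding one on $\T$.

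The key observation I would exploit is that the additive constant $w$ disappears upon differentiation, so that $(B - w)'(z) = B'(z)$. Consequently a point $z_0 \in \T$ is a solution of $B(z) = w$ of multiplicity at least two precisely when both $B(z_0) - w = 0$ and $B'(z_0) = 0$. I would then invoke Lemma \ref{L:nozerob'ont}, which guarantees $B'(e^{i\theta}) \neq 0$ for every $e^{i\theta} \in \T$. This immediately rules out the second condition: no boundary solution can have multiplicity greater than one. Hence each of the $n$ solutions supplied by Theorem \ref{T:solution-n} is simple, and the $n$ solutions on $\T$ are therefore distinct.

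I do not anticipate any genuine obstacle here, since the substance lies entirely in the two earlier results and the corollary is essentially their conjunction. The only point that warrants care—and it is a routine one—is the passage from ``distinct solutions'' to ``simple zeros of $B - w$,'' which rests on the elementary fact that a zero of an analytic function is simple exactly when the derivative fails to vanish there. Once that dictionary is in place, Lemma \ref{L:nozerob'ont} does all the work.
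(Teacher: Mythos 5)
Your proof is correct and follows exactly the route the paper intends: the corollary is stated there as an immediate consequence of Theorem \ref{T:solution-n} together with Lemma \ref{L:nozerob'ont}, with the observation that a repeated solution on $\T$ would force $B'$ to vanish there. You have simply written out the details the paper leaves implicit.
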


Since $B'$ is continuous on $\D^- $, we have
\begin{equation*}
    \lim_{z \to e^{i\theta}} |B'(z)| = |B'(e^{i\theta})|,
\end{equation*}
in which convergence is uniform with respect to $\theta$.  A more subtle expression for $|B'(e^{i\theta})|$
is provided by the following theorem.

\begin{Theorem} \label{T:heins-b'to1}
    If $B$ is a finite Blaschke product, then for all $e^{i\theta} \in \T$,
    \begin{equation*}
        \lim_{z \to e^{i\theta}} \frac{1-|B(z)|^2}{1-|z|^2} = \lim_{z \to e^{i\theta}} \frac{1-|B(z)|}{1-|z|} = |B'(e^{i\theta})|,
    \end{equation*}
    in which convergence is uniform with respect to $\theta$.
\end{Theorem}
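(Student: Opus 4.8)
The plan is to reduce both limits to the modulus identity of Theorem \ref{T:identityforb2} combined with the boundary derivative formula \eqref{E:estb'1-0-3-fin}, and to obtain the uniformity for free by exhibiting a single continuous function on the compact disk $\D^-$. First I would write $B(z) = \prod_{j=1}^{n} \frac{z-z_j}{1-\overline{z_j}z}$; the sign conventions here differ from those of the derivative section only by unimodular constants, which affect neither $|B'|$ nor any of the moduli below. Theorem \ref{T:identityforb2} then records, for $z \in \C \setminus \T$,
\[
    \frac{1-|B(z)|^2}{1-|z|^2} = \sum_{k=1}^{n} |B_k(z)|^2 \, \frac{1-|z_k|^2}{|1-\overline{z_k}z|^2} =: F(z).
\]
The crucial observation is that $F$ extends to a continuous function on all of $\D^-$: each $B_k$ is analytic on a neighborhood of $\D^-$, and each denominator $|1-\overline{z_k}z|^2$ is bounded away from $0$ on $\D^-$ because $z_k \in \D$.

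Next I would evaluate $F$ on $\T$. Since every Blaschke factor is unimodular on $\T$, we have $|B_k(e^{i\theta})| = 1$ for each $k$, whence
\[
    F(e^{i\theta}) = \sum_{k=1}^{n} \frac{1-|z_k|^2}{|e^{i\theta}-z_k|^2} = |B'(e^{i\theta})|,
\]
the last equality being precisely \eqref{E:estb'1-0-3-fin}. Uniform convergence of the first limit now follows immediately: $F$ is continuous on the compact set $\D^-$, hence uniformly continuous, so $F(z) \to F(e^{i\theta}) = |B'(e^{i\theta})|$ as $z \to e^{i\theta}$, uniformly in $\theta$. This establishes the first equality.

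For the second equality I would factor
\[
    \frac{1-|B(z)|}{1-|z|} = \frac{1-|B(z)|^2}{1-|z|^2} \cdot \frac{1+|z|}{1+|B(z)|}
\]
and show that the second factor tends to $1$ uniformly. Indeed, $|z| \to 1$ trivially and uniformly, while $|B(z)| \to 1$ uniformly because $B$ is continuous on the compact $\D^-$ and unimodular on $\T$; moreover the denominator satisfies $1+|B(z)| \geq 1$, so no division difficulty arises. Combining this with the first part yields the claim.

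I expect the only genuinely delicate point to be the \emph{uniformity} of the convergence, and the whole strategy is arranged to turn that into a triviality: by rewriting $\frac{1-|B(z)|^2}{1-|z|^2}$ as the single function $F$, continuous on the compact disk $\D^-$ with boundary values $|B'(e^{i\theta})|$, an ordinary pointwise limit upgrades automatically to a uniform one. The second equality then costs nothing beyond the uniform continuity of $B$ itself.
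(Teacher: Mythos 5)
Your proposal is correct and follows essentially the same route as the paper: both rest on the identity of Theorem \ref{T:identityforb2} evaluated on $\T$ together with \eqref{E:estb'1-0-3-fin}, and both obtain the second equality by factoring out $\frac{1+|z|}{1+|B(z)|}$. Your justification of uniformity, via the continuity of the right-hand side of \eqref{eq:1BmodId} on the compact set $\D^-$, is in fact slightly more explicit than the paper's one-line appeal to the uniform convergence of $|B|$ to $1$ near $\T$.
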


\begin{proof}
    If $B$ denotes the finite Blaschke product \eqref{eq:Bgenc},
    then \eqref{eq:1BmodId} tells us that
    \begin{equation*}
        \lim_{z \to e^{i\theta}} \frac{1-|B(z)|^2}{1-|z|^2}  = \sum_{k=1}^{n}  \frac{1-|z_k|^2}{|1-\overline{z_k} e^{i\theta}|^2}
        = \sum_{k=1}^{n} \frac{1-|z_k|^2}{|e^{i\theta}-z_k|^2} = |B'(e^{i\theta})|;
    \end{equation*}
    the final equality is \eqref{E:estb'1-0-3-fin}.  Since 
    the modulus of a finite Blaschke product tends uniformly to $1$ as one approaches the boundary $\T$, \eqref{eq:1BmodId} shows that 
     convergence is uniform in $\theta$.  The identity
    \begin{equation*}
        \lim_{z \to e^{i\theta}} \frac{1-|B(z)|^2}{1-|z|^2} 
        = \lim_{z \to e^{i\theta}} \frac{1-|B(z)|}{1-|z|}\cdot\frac{1+|B(z)|}{1+|z|}
        = \lim_{z \to e^{i\theta}} \frac{1-|B(z)|}{1-|z|}
    \end{equation*}
    follows for the same reason.
\end{proof}

As it turns out, the previous theorem holds in a more general setting under the heading of an ``angular derivative''. 

\begin{Corollary} \label{T:heins-b'to1-ont}
    Let $B$ be a finite Blaschke product. Then
    \begin{equation*}
        \lim_{|z| \to 1} \frac{|B'(z)| (1-|z|^2)}{1-|B(z)|^2} = 1.
    \end{equation*}
\end{Corollary}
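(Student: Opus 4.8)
The plan is to read the displayed quotient as a ratio of two quantities that, by Theorem \ref{T:heins-b'to1}, converge to the \emph{same} nonzero boundary value. Writing
\begin{equation*}
    \frac{|B'(z)|\,(1-|z|^2)}{1-|B(z)|^2} = \frac{|B'(z)|}{\;\dfrac{1-|B(z)|^2}{1-|z|^2}\;},
\end{equation*}
I would analyze numerator and denominator separately as $|z| \to 1$. For $z \in \D$ near $\T$, let $\zeta = z/|z| \in \T$ denote its radial projection, so that $|z - \zeta| = 1 - |z| \to 0$. The strategy is then to show that both $|B'(z)|$ and $\frac{1-|B(z)|^2}{1-|z|^2}$ differ from the common quantity $|B'(\zeta)|$ by an amount tending to $0$ uniformly, and that this common quantity is bounded away from $0$.

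For the denominator, Theorem \ref{T:heins-b'to1} gives $\frac{1-|B(z)|^2}{1-|z|^2} \to |B'(e^{i\theta})|$ as $z \to e^{i\theta}$, uniformly in $\theta$; taking $\theta = \arg\zeta$ and $|z-\zeta| = 1-|z|$, this yields $\frac{1-|B(z)|^2}{1-|z|^2} - |B'(\zeta)| \to 0$ uniformly as $|z| \to 1$. For the numerator, $B$ is analytic on a neighborhood of $\D^-$, so $B'$ is uniformly continuous on the compact set $\D^-$; hence $\bigl| |B'(z)| - |B'(\zeta)| \bigr| \leq |B'(z) - B'(\zeta)| \to 0$ uniformly as $|z - \zeta| = 1 - |z| \to 0$. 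Finally, to control the division I would invoke Lemma \ref{L:nozerob'ont}: since $B'$ has no zeros on $\T$ and $|B'|$ is continuous on the compact circle, $m := \min_{\zeta \in \T} |B'(\zeta)| > 0$. By the denominator estimate, $\frac{1-|B(z)|^2}{1-|z|^2}$ stays above $m/2$ once $|z|$ is close enough to $1$, so its reciprocal is bounded there.

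Assembling these pieces, the estimate
\begin{equation*}
    \left| \frac{|B'(z)|}{\frac{1-|B(z)|^2}{1-|z|^2}} - 1 \right| = \frac{\left| |B'(z)| - \frac{1-|B(z)|^2}{1-|z|^2} \right|}{\frac{1-|B(z)|^2}{1-|z|^2}}
\end{equation*}
forces the ratio to $1$: the numerator on the right tends to $0$ (both of its terms approach $|B'(\zeta)|$ uniformly), while the denominator stays above $m/2$. The main obstacle, such as it is, is purely bookkeeping: combining the two uniform estimates with the lower bound $m$ into a single $\epsilon$-estimate valid for all $z$ with $|z|$ sufficiently close to $1$, irrespective of $\arg z$. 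Everything substantive is already contained in the uniform convergence of Theorem \ref{T:heins-b'to1} together with the continuity and nonvanishing of $B'$ on $\T$.
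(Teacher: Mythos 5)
Your proof is correct and follows exactly the route the paper intends: the corollary is stated as an immediate consequence of Theorem \ref{T:heins-b'to1}, the uniform convergence $|B'(z)| \to |B'(e^{i\theta})|$ noted just before it, and the nonvanishing of $B'$ on $\T$ from Lemma \ref{L:nozerob'ont}. Your bookkeeping with the radial projection $\zeta = z/|z|$ and the lower bound $m = \min_{\T}|B'| > 0$ is exactly the right way to make the uniformity precise.
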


%%%%%%%%%%%%%%%%%%%%%%%%%%%%%%%%%%%%%%%%%%%%%%%%%%%%%%%%%%%%%%%%%
%%%%%%%%%%%%%%%%%%%%%%%%%%%%%%%%%%%%%%%%%%%%%%%%%%%%%%%%%%%%%%%%%
%%%%%%%%%%%%%%%%%%%%%%%%% NEW SECTION %%%%%%%%%%%%%%%%%%%%%%%%%%%
%%%%%%%%%%%%%%%%%%%%%%%%%%%%%%%%%%%%%%%%%%%%%%%%%%%%%%%%%%%%%%%%%
%%%%%%%%%%%%%%%%%%%%%%%%%%%%%%%%%%%%%%%%%%%%%%%%%%%%%%%%%%%%%%%%%

\section{The zeros of $B'$}

If $B$ is a finite Blaschke product of degree $n$, then \eqref{eq:QuotientOfThings}
and the quotient rule for derivatives imply that $B' = P/Q$, in which $P$ and $Q$ are polynomials and $\deg P \leq 2n-2$. 
Lemma \ref{L:nozerob'ont} ensures that there are no zeros of $B'$ on $\T$; they are either in $\D$ or in $\D_e:= \widehat{\C}\backslash \D^-$.  We can be more specific.

\begin{Lemma} \label{L:zerosindde}
    Let $B$ be finite Blaschke product. 
    Suppose that $w \in \C \backslash \{0\}$ and that $B(w) \neq 0$. Then $B'(w)=0$ if and only if $B'( 1/\overline{w} ) = 0$.
\end{Lemma}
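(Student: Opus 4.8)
The plan is to differentiate the reflection identity \eqref{E:bles1up4}. Rewrite it as $B(z)\,\overline{B(1/\overline z)} = 1$ on $\widehat{\C}$ and introduce the \emph{reflected function} $B^{*}(z) = \overline{B(1/\overline z)}$. The key observation is that $B^{*}$ is again an ordinary meromorphic (indeed rational) function of $z$: expanding $B$ in a power series about a regular point and conjugating the coefficients, one sees that $B^{*}$ is holomorphic wherever it is finite, even though it is built from a complex conjugation. Hence $B^{*}$ may be differentiated in the usual way, and the whole argument reduces to the product rule applied to $B(z)B^{*}(z)=1$.

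First I would record the derivative of the reflection. Comparing the (conjugated, inverted-variable) series for $B^{*}$ with that of $B'$ gives
\begin{equation*}
    (B^{*})'(z) = -\frac{1}{z^{2}}\,\overline{B'(1/\overline z)}.
\end{equation*}
Differentiating $B(z)B^{*}(z)=1$ and using $\overline{B(1/\overline z)} = 1/B(z)$ from \eqref{E:bles1up4} then produces the reflection formula for the derivative,
\begin{equation*}
    B'(z) = \frac{B(z)^{2}}{z^{2}}\,\overline{B'(1/\overline z)},
\end{equation*}
valid at every $z \neq 0$ at which $B$ is analytic.

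Finally I would read off the conclusion by evaluating at $z=w$. The hypotheses $w \neq 0$ and $B(w)\neq 0$ (with $B(w)$ finite, so that $B'(w)$ is defined) make the prefactor $B(w)^{2}/w^{2}$ a nonzero finite scalar; moreover \eqref{E:bles1up4} gives $B(1/\overline w) = 1/\overline{B(w)}$, which is finite and nonzero, so $1/\overline w$ is likewise a regular point of $B$ and the right-hand side is meaningful. Consequently
\begin{equation*}
    B'(w) = 0 \iff \overline{B'(1/\overline w)} = 0 \iff B'(1/\overline w) = 0,
\end{equation*}
which is exactly the assertion. Conceptually this merely says that the zero set of $B'$ is invariant under the circle reflection $z \mapsto 1/\overline z$; I expect the only delicate point to be the correct bookkeeping of the complex conjugation when differentiating $\overline{B(1/\overline z)}$, and the rational description of $B^{*}$ is what disposes of it cleanly.
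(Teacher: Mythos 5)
Your proof is correct and takes essentially the same route as the paper: both differentiate the reflection identity $B(z)\,\overline{B(1/\overline{z})}=1$ to obtain $B'(z)\overline{B(1/\overline{z})} - z^{-2}B(z)\overline{B'(1/\overline{z})}=0$ and read off the equivalence from the nonvanishing of the remaining factors. Your extra care in verifying that $\overline{B(1/\overline{z})}$ is a genuine rational function of $z$ (so the differentiation is legitimate) is a detail the paper leaves implicit.
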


\begin{proof}
    For each $z \in \C \backslash \{0\}$, \eqref{E:bles1up4} tells us that
    \begin{equation*}
        B(z) \overline{B(1/\overline{z})} = 1.
    \end{equation*}
    Taking the derivative with respect to $z$ reveals that
    \begin{equation*}
        B'(z)\overline{B(1/\overline{z})} - \frac{1}{z^2} B(z)\overline{B'(1/\overline{z})} = 0,
    \end{equation*}
    so
    \begin{equation*}
        B'(w)=0 \quad \iff\quad B'( 1/\overline{w} ) = 0. \qedhere
    \end{equation*}
\end{proof}

\begin{Theorem} \label{T:zeros-b'-indde}
    Let $B$ be a finite Blaschke product of order $n$. Write
    \begin{equation*}
        B(z) = e^{i\beta} z^{j_0} \prod_{k=1}^{m} \bigg(
        \frac{z_k-z}{1-\overline{z_k}  z}  \bigg)^{j_k},
    \end{equation*}
    in which $\beta \in \R$, $j_k$ are positive integers with
    \begin{equation*}
        j_0+j_1+\cdots+j_m = n,
    \end{equation*}
    and $z_1,z_2,\ldots,z_m$ are distinct points in $\D \backslash \{0\}$. 
    Then $B'$ has exactly $n-1$ zeros in $\D$.  The number of zeros in $\D_e$ is 
    $m$ if $j_0 \neq 0$ and $\leq m-1$ if $j_0 = 0$.
\end{Theorem}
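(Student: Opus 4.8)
The plan is to treat the two assertions separately, handling the count in $\D$ by the argument principle and the count in $\D_e$ by the reflection symmetry recorded in Lemma \ref{L:zerosindde}.

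First I would count the zeros of $B'$ in $\D$. Since the poles of $B$ lie in $\D_e$, both $B$ and $B'$ are analytic on a neighborhood of $\D^-$, so by the argument principle the number of zeros of $B'$ in $\D$ equals the winding number of $\theta \mapsto B'(e^{i\theta})$ about the origin. Here I would invoke \eqref{E:estb'1-0-2-fin}, which gives
\begin{equation*}
    B'(e^{i\theta}) = e^{-i\theta} B(e^{i\theta}) \sum_{k=1}^n \frac{1-|z_k|^2}{|e^{i\theta}-z_k|^2},
\end{equation*}
with the sum a strictly positive real factor (the zeros being listed with multiplicity). Thus $\arg B'(e^{i\theta}) \equiv \arg B(e^{i\theta}) - \theta \pmod{2\pi}$, and since $B$ has $n$ zeros and no poles in $\D$ its boundary winding is $n$, while $e^{-i\theta}$ contributes $-1$ and the positive factor contributes $0$. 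Hence the winding number is $n-1$, and Lemma \ref{L:nozerob'ont} guarantees there are no zeros on $\T$ to account for.

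For the count in $\D_e$, I would split the $n-1$ zeros of $B'$ in $\D$ into those forced by multiple zeros of $B$ and the remaining ``free'' ones. At a zero of $B$ of multiplicity $j$ the derivative $B'$ vanishes to order $j-1$, so the forced zeros contribute $(j_0-1) + \sum_{k=1}^m (j_k-1) = n-m-1$ when $j_0 \geq 1$, and $\sum_{k=1}^m(j_k-1) = n-m$ when $j_0 = 0$. Subtracting from $n-1$ leaves exactly $m$ free zeros in $\D$ when $j_0 \neq 0$ and exactly $m-1$ when $j_0 = 0$. The reflection $\sigma(z) = 1/\overline z$ is the key: from \eqref{E:bles1up4} one checks that $B \circ \sigma = \sigma \circ B$, so $\sigma$ carries critical points of $B$ to critical points of the same ramification order. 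Applied to the free critical points --- those $w$ with $B(w) \neq 0$, which Lemma \ref{L:zerosindde} pairs with $1/\overline w$ --- this sets up a multiplicity-preserving bijection between the free zeros of $B'$ in $\D \setminus \{0\}$ and the finite zeros of $B'$ in $\D_e$.

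When $j_0 \neq 0$ the origin is itself a zero of $B$, so none of the $m$ free critical points sits at $0$; all of them lie in $\D \setminus \{0\}$ and reflect bijectively to $m$ zeros of $B'$ in $\D_e$, while $\infty$ is a pole (if $j_0 \geq 2$) or a regular nonzero point (if $j_0 = 1$) of $B'$ and contributes nothing, giving exactly $m$. When $j_0 = 0$ the origin may itself be a free critical point, and any zero of $B'$ at $0$ reflects to a zero at $\infty$ rather than to a finite point of $\D_e$; consequently at most $m-1$ of the free critical points reflect into finite points of $\D_e$, which yields the bound. The step I expect to be most delicate is precisely this bookkeeping at $0$ and $\infty$: one must confirm that $\sigma$ preserves multiplicities (via $B \circ \sigma = \sigma \circ B$) rather than merely the zero set furnished by Lemma \ref{L:zerosindde}, and recognize that the inequality for $j_0=0$ is exactly the slack created when a free critical point at the origin is exported to infinity. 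A cross-check is available by counting the total zeros of the rational function $B'$ on $\widehat{\C}$ against its poles at the points $1/\overline{z_k}$ and at $\infty$.
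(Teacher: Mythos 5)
Your proposal is correct, and its first half takes a genuinely different route from the paper. For the count of $n-1$ zeros in $\D$, the paper first treats the case of distinct zeros (clearing denominators in \eqref{E:estb'1-0-fin} to get a degree-$2n-2$ polynomial whose roots pair up under $w \mapsto 1/\overline{w}$ by Lemma \ref{L:zerosindde}) and then passes to the general case by approximating $B$ with the distinct-zero Blaschke products of Theorem \ref{T:approx-famil-distinct} and invoking uniform convergence on compacta. Your winding-number argument --- writing $B'(e^{i\theta})$ via \eqref{E:estb'1-0-2-fin} as $e^{-i\theta}B(e^{i\theta})$ times a strictly positive factor and adding up the windings $n$, $-1$, and $0$ --- reaches the same conclusion in one step, with no case distinction and no appeal to the approximation theorem; this is cleaner and arguably the ``right'' proof of that half. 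For the count in $\D_e$, the two arguments are close in substance but packaged differently: the paper factors $B'$ explicitly as a rational function, peels off the forced zeros at the $z_k$ (and at $0$), and counts the degree of the residual polynomial $P$, whose roots pair up under reflection; you instead count forced versus free critical points in $\D$ and transport the free ones by the anticonformal involution $\sigma(z)=1/\overline{z}$. Your version needs the observation, which you correctly flag as the delicate step, that $B\circ\sigma=\sigma\circ B$ makes $\sigma$ preserve critical multiplicities (not just the critical set given by Lemma \ref{L:zerosindde}); the paper's degree count sidesteps this because multiplicities are built into $\deg P$. You should also record explicitly that the correspondence is onto --- every finite zero of $B'$ in $\D_e$ lies at a non-pole of $B$, hence reflects back to a free critical point in $\D\setminus\{0\}$ --- but that is immediate from the same symmetry, and your bookkeeping at $0$ and $\infty$ (a free critical point at the origin exporting to $\infty$, which is exactly the slack behind the inequality $\leq m-1$ when $j_0=0$) matches the paper's accounting of the $\ell'$ zeros of $P$ at the origin.
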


\begin{proof}
    First suppose that the zeros of $B$ are distinct and that neither $B$ nor $B'$ have any zeros at the origin. 
    By \eqref{E:estb'1-0-fin}, $B'(z) =0$ if and only if
    \begin{equation*}
        \sum_{k=1}^{n}\frac{1-|z_k|^2}{(1-\overline{z_k}  z)(z-z_k)} = 0.
    \end{equation*}
    Multiplying both sides of the preceding by
    \begin{equation*}
        \prod_{k=1}^{n} (1-\overline{z_k}  z)(z-z_k),
    \end{equation*}
    we obtain a polynomial equation of degree $2n-2$ whose zeros are not in
    \begin{equation*}
        \{0,z_1,z_2,\ldots,z_n,1/\overline{z_1},1/ \overline{z_2}\ldots, 1/ \overline{z_n}\}.
    \end{equation*}
    By Lemma \ref{L:zerosindde}, there are exactly $n-1$ zeros in $\D$ and $n-1$ zeros in $\D_e$. 
    
    In the general case, Theorem \ref{T:approx-famil-distinct} permits us to
    approximate $B$ by a family $B_\epsilon$ of finite Blaschke products of order $n$ with distinct zeros 
    so that neither $B_\epsilon$ nor $B_\epsilon'$ have any zeros at the origin
    (the convergence is uniform on compact sets).  It follows that $B'$ has exactly $n-1$ zeros, 
    counted according to multiplicity, in $\D$.  However, it may 
    have fewer zeros in $\D_e$ since the zeros of $B_\epsilon'$ in $\D_e$ may be at the poles of $B$ or at $\infty$.
    
    We now consider the zeros of $B'$ in $\D_e$. First assume that $j_0 \neq 0$. Then, by direct verification, 
    \begin{equation*}
    B'(z) = z^{j_0-1}  \frac{ \prod_{k=1}^{m}(z-z_k)^{j_k-1}}{\prod_{k=1}^{m}(z-1/\overline{z_k})^{j_k+1}}  P(z),
    \end{equation*}
    in which $P$ is a polynomial of degree $2m$ with no zeros in 
    $\{0,z_1,\ldots,z_m\}$.   Consequently, $B'$ has $n+m-1$ zeros in $\C$. These are the zeros of $B$ and of $P$,
    repeated according to multiplicity.  Lemma \ref{L:zerosindde} ensures that the zeros of $P$ are of the form
    \begin{equation*}
        w_1, 1/\overline{w_1}, w_2, 1/\overline{w_2}, \ldots, w_m, 1 / \overline{w_m},
    \end{equation*}
    in which $w_1,w_2,\ldots,w_m \in \D \backslash \{0,z_1,z_2,\ldots,z_m\}$.
    
    Now suppose that $j_0 = 0$ and write 
    \begin{equation*}
        B(z) = C  \frac{ \prod_{k=1}^{m}(z-z_k)^{j_k}}{\prod_{k=1}^{m}(z-1/\overline{z_k})^{j_k}}
        = C \left( 1+ \frac{Q(z)}{\prod_{k=1}^{m}(z-1/\overline{z_k})^{j_k}} \right),
    \end{equation*}
    in which $C$ is constant and $Q$ is a polynomial of degree $n-1$.  Thus,
    \begin{equation*}
        B'(z) = \frac{ \prod_{k=1}^{m}(z-z_k)^{j_k-1}}{\prod_{k=1}^{m}(z-1/\overline{z_k})^{j_k+1}} \cdot P(z),
    \end{equation*}
    in which $P$ is a polynomial of degree at most $2m-2$ that has no zeros among $\{z_1,z_2,\ldots,z_m\}$. 
    Consequently, $B'$ has at most $n+m-2$ zeros in $\C$. These are the zeros of $B$ and of $P$,
    repeated according to multiplicity.  In this case, $P$ might have zeros at the origin. 
    For the rest of its zeros, Lemma \ref{L:zerosindde} applies. Therefore, 
    $P$ can have $\ell'$ zeros at the origin and the rest are of the form
    \begin{equation*}
        w_1,1 / \overline{w_1},  w_2, 1/ \overline{w_2}, \ldots, w_{\ell}, 1/\overline{w_{\ell}},
    \end{equation*}
    where $w_1,w_2,\ldots,w_{\ell} \in \D \backslash \{0,z_1,z_2,\ldots,z_m\}$. Since 
    $\ell'+2\ell = \deg P \leq 2m-2$, we have $\ell \leq m-1$.
\end{proof}

There are many results that relate the zeros of a polynomial to the zeros of its derivatives. 
The oldest goes back to Gauss and Lucas \cite{luc74}: the zeros of $P'$ belong to the convex hull of the zeros of $P$. In  \cite{MR1788126} they prove an analogous result in that the zeros of $B'$, the derivative of a finite Blaschke product $B$, are in the convex hull of $B^{-1}(\{0\}) \cup \{0\}$.   
A refinement of this result from \cite{MR3176999} involves some hyperbolic geometry. 
We say that $A \subseteq \D$ is \emph{hyperbolically convex} if 
\begin{equation*}
    \text{$z_1,z_2 \in A$ and $t \in [0,1]$} \quad\implies\quad
    \frac{z_1 - \frac{z_1-z_2}{1-\overline{z_1} z_2} t}{1-\overline{z_1} \frac{z_1-z_2}{1-\overline{z_1} z_2} t} \in A.
\end{equation*}
The complicated quotient in the above formula is the parametric formula for the hyperbolic segment between $z_1$ and $z_2$. 
The \emph{hyperbolic convex hull} of $A \subseteq \D$ is the smallest hyperbolic convex set that 
contains $A$; it is the intersection of all hyperbolic convex sets that contain $A$.  
Figure \ref{CH5} shows the hyperbolic convex hull of five points in $\D$.

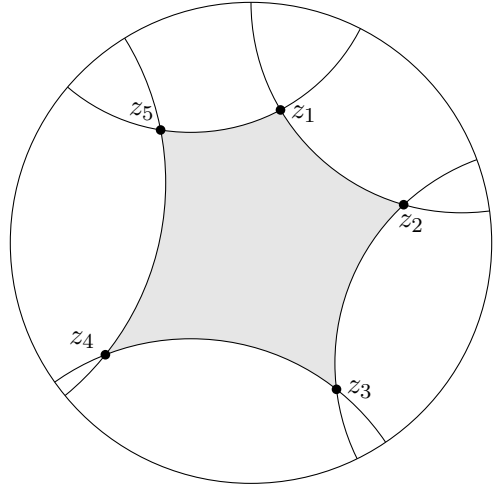
\begin{figure}
    \centering
    \begin{tikzpicture}[scale = 0.8]
        \fill[gray!20!white](0,0) circle (4);
        \begin{scope}
            \clip (0,0) circle(4);
            %\draw outer circle 
            \pgfmathsetmacro\rone{sqrt(13)}
            \pgfmathsetmacro\rtwo{sqrt(10)}
            \pgfmathsetmacro\rthree{sqrt(21)}
            \pgfmathsetmacro\rfour{3.90512}
            \fill[white] (5,-2) circle (\rone);
            \fill[white] (3.5,4) circle (3.5);
            \fill[white] (-1,5) circle (\rtwo);
            \fill[white] (-6,1) circle (\rthree);
            \fill[white] (-1,-5.5) circle (\rfour);
            
            \draw[name path = circ1] (5,-2) circle (\rone);
            \draw[name path = circ2] (3.5,4) circle (3.5);
            \draw[name path = circ3](-1,5) circle (\rtwo);
            \draw[name path = circ4] (-6,1) circle (\rthree);
            \draw[name path = circ5] (-1,-5.5) circle (\rfour);
            
            \fill[name intersections={of=circ1 and circ2,total=\t}]
                \foreach \s in {1,...,\t}{(intersection-\s) circle (0.08) node[below,xshift=0.1cm]{$z_2$}};

            \fill[name intersections={of=circ2 and circ3,total=\t}]
                \foreach \s in {1,...,\t}{(intersection-\s) circle (0.08) node[right,yshift=-.05cm]{$z_1$}};
                
            \fill[name intersections={of=circ3 and circ4,total=\t}]
                \foreach \s in {1,...,\t}{(intersection-\s) circle (0.08) node[above,xshift=-.25cm]{$z_5$}};
                
            \fill[name intersections={of=circ4 and circ5,total=\t}]
                \foreach \s in {1,...,\t}{(intersection-\s) circle (0.08) node[left,yshift=.2cm]{$z_4$}};
                
            \fill[name intersections={of=circ1 and circ5,total=\t}]
                \foreach \s in {1,...,\t}{(intersection-\s) circle (0.08) node[right]{$z_3$}};
        \end{scope}
        \draw(0,0) circle (4);
    \end{tikzpicture}
     \caption{The hyperbolic convex hull of $z_1,z_2,\ldots,z_5$.}
        \label{CH5}
\end{figure}

%\begin{figure}[ht]\centering
%    \includegraphics[width=0.5\linewidth]{fig-convex-hyperbolic-hull}
%        \caption{The hyperbolic convex hull of $z_1,z_2,\ldots,z_5$.}
%        \label{CH5}
%\end{figure}

\begin{Theorem} \label{T:hyper-convex}
    If $B$ is a finite Blaschke product, then the zeros of $B'$ in $\D$ belong to the hyperbolic convex hull of the zeros of $B$.
\end{Theorem}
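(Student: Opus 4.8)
The plan is to use the fact that finite Blaschke products, their critical points, and the notion of hyperbolic convexity are all invariant under $\Aut\D$, so that I may normalize the critical point to sit at the origin, where the logarithmic derivative degenerates into a simple convex-combination identity; the remaining work is then purely geometric.

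First, if a zero $w\in\D$ of $B'$ is also a zero of $B$, it is one of the zeros of $B$ and thus trivially lies in their hyperbolic convex hull, so I assume $B(w)\neq0$. Precomposing with the involution $\tau_w$ and setting $\tilde B=B\circ\tau_w$, Lemma~\ref{L:balschke-conformal} guarantees that $\tilde B$ is again a finite Blaschke product, and the chain rule together with $\tau_w'\neq0$ gives $\tilde B'(0)=B'(w)\tau_w'(0)=0$ and $\tilde B(0)=B(w)\neq0$; the zeros of $\tilde B$ are the points $\tau_w(z_k)$. Since $\tau_w$ is a hyperbolic isometry it carries geodesics to geodesics, so $\tau_w(\text{hyperbolic hull of }\{z_k\})$ is the hyperbolic hull of $\{\tau_w(z_k)\}$; as $\tau_w(0)=w$, it therefore suffices to prove that the origin lies in the hyperbolic convex hull of the zeros of $\tilde B$. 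Renaming, I assume from now on that $B'(0)=0$, $B(0)\neq0$, with zeros $z_1,\dots,z_n\in\D\setminus\{0\}$.

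Evaluating the logarithmic derivative \eqref{E:estb'1-0-fin} at $z=0$ yields $\sum_{k} (1-|z_k|^2)/z_k=0$; replacing $1/z_k$ by $\overline{z_k}/|z_k|^2$ and taking complex conjugates gives $\sum_{k}\mu_k z_k=0$ with strictly positive weights $\mu_k=(1-|z_k|^2)/|z_k|^2$. After dividing by $\sum_k\mu_k$ this realizes the origin as an honest convex combination of $z_1,\dots,z_n$, so the origin belongs to their \emph{Euclidean} convex hull. What remains, and what I expect to be the crux of the argument, is the geometric lemma that for a finite set $A\subseteq\D$ the origin lies in the Euclidean convex hull of $A$ only if it lies in the hyperbolic convex hull of $A$.

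I would prove this lemma by contraposition. If the origin is not in the hyperbolic convex hull of $A$, let $p$ be the hyperbolically nearest point of that (closed, hyperbolically convex) hull to $0$. The geodesic $\sigma$ through $p$ perpendicular to the radial segment $[0,p]$ supports the hull and separates it from $0$, so $A$ lies in the hyperbolic half-plane cut off by $\sigma$. The key computation is that $\sigma$—an arc of a circle orthogonal to $\T$ whose nearest point to the origin is precisely $p$—lies entirely in the open Euclidean half-disk $\{z:\operatorname{Re}(\overline{p}\,z)>0\}$, and hence so does the cap beyond it containing $A$. Thus $A$ sits strictly on one side of the diameter through $0$ orthogonal to $p$, and the origin cannot be a Euclidean convex combination of points of $A$. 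Combining this lemma with the convex-combination identity of the previous step places the origin in the hyperbolic convex hull of the $z_k$, and transporting back by $\tau_w$ returns $w$ to the hyperbolic convex hull of the zeros of $B$, as desired.
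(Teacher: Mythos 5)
Your proof is correct, but it takes a genuinely different route from the paper's. The paper normalizes the \emph{zeros}: assuming they all lie in the upper half-disk $\D_+$, it shows $\Im\left(B'/B\right)>0$ throughout $\D_-$ by analyzing the image of $\T_-$ and of $[-1,1]$ under each summand $\varphi(z)=\frac{1-|a|^2}{(1-\overline{a}z)(z-a)}$ of the logarithmic derivative \eqref{E:estb'1-0-fin}; it then runs over all hyperbolic half-planes via conformal maps and rotations. You instead normalize the \emph{critical point} to the origin via $\tau_w$ (using Lemma \ref{L:balschke-conformal}, as the paper also does), where the vanishing of the logarithmic derivative collapses to $\sum_k \mu_k z_k=0$ with $\mu_k=(1-|z_k|^2)/|z_k|^2>0$, exhibiting $0$ as a Euclidean convex combination of the zeros; all the geometry is then isolated in the lemma that, at the center of the disk, membership in the Euclidean hull forces membership in the hyperbolic hull. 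Your supporting-geodesic computation for that lemma is sound: the circle orthogonal to $\T$ that meets the radius $[0,p]$ perpendicularly at $p$ has Euclidean center $\tfrac12(|p|+1/|p|)\,p/|p|$ and radius $\tfrac12(1/|p|-|p|)$, so its nearest point to the origin is exactly $p$ and the hyperbolic half-plane beyond it sits inside $\{\Re(\overline{p}z)\geq |p|^2\}$, which excludes $0$ from the Euclidean hull. The two arguments are close cousins --- the paper's half-plane-preservation claim, transported so the critical point is at $0$ and the bounding geodesic is a diameter, is exactly the statement that the zeros cannot all lie strictly on one side of a diameter, which your identity $\sum_k\mu_k z_k=0$ gives instantly --- but yours is more conceptual (a clean Gauss--Lucas-style weighted average) at the cost of importing standard hyperbolic-geometry facts: you should say a word about why the hyperbolic convex hull of a finite set is closed (so the nearest point $p$ exists) and why the perpendicular geodesic at the nearest point is a supporting geodesic; both are routine but are doing real work in your argument, whereas the paper's version stays entirely inside elementary complex analysis.
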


\begin{proof}
    In what follows, we let
    \begin{equation*}
        \D_{-} = \D \cap \{ z : \Im z < 0\} \quad \text{and} \quad \D_+ = \D \cap \{ z : \Im z > 0\}.
    \end{equation*}
    First suppose that the zeros of $B$ are all in $\D_{+}$. By \eqref{E:estb'1-0-fin} we have 
    \begin{equation} \label{E:imagimarypart-bb}
        \Im \left( \frac{B'(z)}{B(z)} \right) = \sum_{k=1}^{n}
        \Im \left( \frac{1-|z_k|^2}{(1-\overline{z_k}  z)(z-z_k)} \right).
    \end{equation}
    Let
    \begin{equation*}
        \varphi(z) = \frac{1-|a|^2}{(1-\overline{a} z)(z-a)},
    \end{equation*}
    in which $a\in\D_+$ is fixed.  To study $\varphi(\D_{-})$, we examine the image of $\T_{-}$ and the interval $[-1, 1]$ under $\varphi$. On the lower semicircle
    \begin{equation*}
        \T_{-} = \{ e^{i\theta} : -\pi \leq \theta \leq 0 \},
    \end{equation*}
    we have
    \begin{equation*}
        \varphi(e^{i\theta}) = \frac{1-|a|^2}{(1-\overline{a} e^{i\theta})(e^{i\theta}-a)} = \frac{1-|a|^2}{|e^{i\theta}-a|^2} e^{-i\theta}.
    \end{equation*}
    Thus, $\T_{-}$ is mapped onto a curve in $\C_+ \cup \R$.  For $t \in [-1,1]$, we have
    \begin{equation*}
    \varphi(t) = \frac{1-|a|^2}{(1-\overline{a} t)(t-a)} = \frac{1-|a|^2}{|(1-\overline{a} t)(t-a)|^2}  (1-a t)(t-\overline{a}),
    \end{equation*}
    so
    \begin{equation*}
        \Im \varphi(t) = \frac{1-|a|^2}{(1-\overline{a} t)(t-a)} = \frac{1-|a|^2}{|(1-\overline{a} t)(t-a)|^2}  (1-t^2) \Im a.
    \end{equation*}
    Thus, $[-1,1]$ is also mapped to a curve in $\C_+ \cup \R$ and hence 
    $\partial \D_{-}$ is mapped to a closed simple curve in $\C_+ \cup \R$. 
    Since $\varphi$ is analytic on $(\D_-)^-$, we deduce that $\varphi$ maps $\D_{-}$ into $\C_+$. Equivalently, we have
    \begin{equation*}
        z \in \D_- \quad \implies \quad \Im \varphi(z) >0.
    \end{equation*}
    Since the zeros of $B$ are in $\C_+$, the representation \eqref{E:imagimarypart-bb} implies
    \begin{equation*}
        z \in \D_- \quad \implies \quad \Im \left( \frac{B'(z)}{B(z)} \right) > 0.
    \end{equation*}
    Hence, $B'$ does not have any zeros in $\D_{-}$. By continuity, it follows that if all zeros of $B$ are in $\D_{+} \cup (-1,1)$, then 
    so are the zeros of $B'$ (recall that we are only considering zeros inside $\D$).
    
    Let $f = B \circ \tau_a$. By Lemma \ref{L:balschke-conformal}, $f$ is also a finite Blaschke product with zeros 
    $\tau_a(z_1), \tau_a(z_2), \ldots, \tau_a(z_n)$.   If we denote the zeros of $B'$ in $\D$ by $w_1,w_2,\ldots,w_{n-1}$, then
    the zeros of $f'$ in $\D$ are
    \begin{equation*}
        \tau_a(w_1), \tau_a(w_2), \ldots, \tau_a(w_{n-1}).
    \end{equation*}
    If we choose $a$ such that
    $\Im \tau_a(z_k) \geq 0$ for $1 \leq k \leq n$,
    then the preceding observation shows that
    \begin{equation*}
            \qquad\Im \tau_a(w_k) \geq 0, \quad \text{for $1 \leq k \leq n-1$}.
    \end{equation*}
    This means that if the zeros of $B$ are on one side of the hyperbolic line
    \begin{equation*}
        \frac{a-z}{1-\overline{a} z} = t, \quad t \in [-1,1],
    \end{equation*}
    then the zeros of $B'$ are also on the same side. Similar comments apply if we replace $\tau_a$ by a rotation $\rho_\gamma$. 
    The intersection of all such hyperbolic lines gives the hyperbolic convex hull of the zeros of $B$.
\end{proof}

Let $a,b \in \D$ be unequal and let
\begin{equation*}
B(z) = \left( \frac{a-z}{1-\overline{a} z} \right)^m \left( \frac{b-z}{1-\overline{b} z} \right)^n.
\end{equation*}
Clearly $B'$ has $m+n-1$ zeros in $\D$ which are $a$, $m-1$ times, and $b$, $n-1$ times, and the last one $c$ which is the solution of the equation
\begin{align*}
& \frac{m(1-|a|^2)}{(1-\overline{a} z)^2} \left( \frac{a-z}{1-\overline{a} z} \right)^{m-1}  \left( \frac{b-z}{1-\overline{b} z} \right)^n\\
&\qquad+ \left( \frac{a-z}{1-\overline{a} z} \right)^m  \left( \frac{b-z}{1-\overline{b} z} \right)^{n-1} \frac{n(1-|b|^2)}{(1-\overline{b} z)^2} = 0.
\end{align*}
Rewriting this as
\begin{equation*}
\left( \frac{z-a}{1-a \overline{z}} \right)\! \Big/\! \left(\frac{z-b}{1-b \overline{z}} \right) =- \left(\frac{m(1-|a|^2)}{|1-\overline{a} z|^2} \right)\! \Big/ \!\left(\frac{n(1-|b|^2)}{|1-\overline{b} z|^2}\right)
\end{equation*}
reveals that $a,b,c$ are on the same hyperbolic line. Moreover, as $m$ and $n$ vary in $\N$, 
the point $c$ traverses a dense subset of the hyperbolic line segment between $a$ and $b$.

\section{Existence of a nonzero residue} \label{S:nonzero-residue}

The only entire finite Blaschke products are $1,z,z^2,\ldots$ and their unimodular
scalar multiples.  All other finite Blaschke products have poles in $1 < |z| < \infty$
and we can consider their residues.

\begin{Theorem}[Heins \cite{MR0043212}]  \label{T:heins}
    If $B$ is a finite Blaschke product with at least one finite
    pole, then $B$ has a nonzero residue.
\end{Theorem}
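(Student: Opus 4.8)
The plan is to argue by contradiction, assuming that $B$ has a finite pole yet every residue of $B$ at its (finitely many) poles vanishes.

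First I would reduce the statement to one about antiderivatives. A rational function admits a rational antiderivative precisely when all of its finite residues vanish: in a partial-fraction expansion only the simple-pole terms $c/(z-p)$ fail to integrate rationally, and these are absent exactly when the residues are zero. Thus the assumption produces a rational function $G$ with $G' = B$; note also that the residue at $\infty$ must vanish, since the residues of any rational function sum to zero over $\widehat{\C}$, so $G$ is genuinely single-valued. The theorem is therefore equivalent to the assertion that a finite Blaschke product with a finite pole is never the derivative of a rational function.

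Next I would isolate where the difficulty lies. By the reflection identity \eqref{E:bles1up4}, the poles of $B$ are exactly the points $1/\overline{z_k}$ reflected from the zeros $z_k\in\D$, and the pole at $1/\overline{z_k}$ has the same order as the zero at $z_k$. Since a simple pole of a nonconstant function always has nonzero residue, the standing assumption forces every zero of $B$ in $\D\setminus\{0\}$ to have multiplicity at least two; in particular, if $B$ has even one simple zero we are already done. A short computation also shows that a single multiple pole, coming from $B = (\,(z-a)/(1-\overline a z)\,)^{n}$, has residue $n(1-|a|^2)/(\overline a(-\overline a)^{n})\neq 0$, so the genuine obstacle is to rule out a global cancellation among several poles, each of order at least two.

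The crux is to exploit the boundary behavior. On $\T$ we have $|B|=1$ by \eqref{E:bles1up2}, while $G$ is analytic on $\D^-$ because its poles lie in $\D_e$. Hence $\gamma(\theta)=G(e^{i\theta})$ is a closed analytic curve of unit speed, $|\gamma'(\theta)| = |ie^{i\theta}B(e^{i\theta})| = 1$, and so of length $2\pi$; and since $\arg\gamma'(\theta) = \tfrac{\pi}{2}+\theta+\arg B(e^{i\theta})$ increases by $2\pi(n+1)$ as $\theta$ runs over a period, its tangent winding number equals $n+1\geq 2$. At the same time $\iint_{\D}|G'|^2 = \iint_{\D}|B|^2 < \pi$, because $|B|<1$ throughout $\D$. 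I would aim to show that these metric–topological constraints, combined quantitatively with the reflection symmetry $B(z)\overline{B(1/\overline z)}=1$, are incompatible with $B$ having a pole, forcing $B$ to be the monomial $z^n$ (which has no finite pole). I expect this reconciliation to be the main obstacle: the same soft data are produced by the excluded monomial as a boundary case, so the symmetry must be used sharply to separate a true Blaschke product with a pole from that extremal situation.
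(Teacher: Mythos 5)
Your opening reduction is exactly the paper's: assume every residue vanishes, conclude that $B$ has a rational primitive, and seek a contradiction. Your side observations are also sound (a simple pole forces a nonzero residue, so all zeros off the origin would have multiplicity at least two; your residue formula $n(1-|a|^2)/(\overline a(-\overline a)^n)$ for a single factor power checks out). But the proof is not completed: at the decisive moment you write that you ``would aim to show'' that the metric--topological data are incompatible with $B$ having a pole, and you yourself identify this reconciliation as ``the main obstacle.'' That is precisely the step that constitutes the theorem. The three facts you extract --- the boundary curve $\theta\mapsto G(e^{i\theta})$ has unit speed and length $2\pi$, its tangent winds $n+1$ times, and $\iint_{\D}|G'|^2<\pi$ --- are all soft, and as you concede they are reproduced verbatim by the excluded case $B=z^n$, $G=z^{n+1}/(n+1)$. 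No mechanism is offered that uses the reflection symmetry ``sharply'' to separate the two situations, so the argument as written establishes nothing beyond the setup.

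For comparison, the paper closes the gap with a quantitative estimate rather than a topological one: normalizing the primitive by $\mathfrak{B}(0)=0$ and integrating along radii gives $|\mathfrak{B}(e^{i\theta})|\leq\int_0^1 r^m\,dr=1/(m+1)$, where $m$ is the order of the zero of $B$ at the origin. If all residues vanish, the rational primitive takes the form $\mathfrak{B}(z)=z^{m+1}P(z)/\prod_n(1-\overline{z_n}z)^{m_n-1}$, and the function $f=(m+1)\mathfrak{B}/(zB)$ is analytic on $\C\setminus\D$, tends to $1$ at infinity, satisfies $|f|\leq 1$ on $\T$ by the radial estimate, and vanishes at each finite pole of $B$ because of the surviving factor $\prod_n(1-\overline{z_n}z)$ in its numerator. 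The maximum principle on the exterior domain forces $f\equiv 1$, contradicting the existence of those zeros. If you want to salvage your geometric framing, you would need some analogously sharp inequality --- the point is that $\sup_{\T}|\mathfrak{B}|\leq 1/(m+1)$ with equality throughout only for the monomial, and it is this rigidity, not the length/winding/area counts, that does the work.
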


\begin{proof}
    Let
    \begin{equation} \label{E:blp} %
        B(z) = e^{i\beta} z^m \prod_{n=1}^{N} \bigg(
        \frac{z-z_n}{1-\overline{z_n}  z} \bigg)^{m_n},
    \end{equation}
    in which $z_1,z_2, \ldots, z_n$ are the distinct zeros of $B$ and let
    \begin{equation} \label{E:prim0}
        \mathfrak{B}(z)  =\int_{0}^{z} B(\zeta)\,d\zeta.
    \end{equation}
    Since $B$ is analytic on $\D^- $, the integral
    in \eqref{E:prim0} is independent of the path of integration. 
    The Fundamental Theorem of Calculus ensures that $\mathfrak{B}'(z) = B(z)$ for each $z \in \D^- $ and that $\mathfrak{B}(0)=0$. 
    By \eqref{E:blp}, for each $e^{i\theta} \in \T$ we have
    \begin{equation} \label{E:bont}
        \mathfrak{B}(e^{i\theta}) = \int_{0}^{e^{i\theta}} B(z) \,dz
        = \int_{0}^{1} e^{i\beta} r^me^{im\theta} 
        \prod_{n=1}^{N} \bigg( \frac{re^{i\theta}-z_n}{1-\overline{z_n}
        re^{i\theta}}  \bigg)^{m_n} e^{i\theta} dr.
    \end{equation}
    The function 
    \begin{equation*}
    \prod_{n=1}^{N} \bigg(\frac{re^{i\theta}-z_n}{1-\overline{z_n} re^{i\theta}} \bigg)^{m_n}
    \end{equation*}
    is a finite Blaschke product and hence \eqref{E:bont} yields
    \begin{align}
        |\mathfrak{B}(e^{i\theta})|
        &= \bigg| \int_{0}^{1} e^{i\beta} r^m e^{im\theta} 
        \prod_{n=1}^{N} \bigg( \frac{re^{i\theta}-z_n}{1-\overline{z_n} 
        re^{i\theta}}  \bigg)^{m_n}
        e^{i\theta}dr\bigg|  \nonumber \\
        &\leqslant \int_{0}^{1} r^m \,dr = \frac{1}{m+1}.  \label{sdfsdkjf} 
    \end{align}
    Perform a partial fraction expansion on \eqref{E:blp} to obtain
    \begin{equation} \label{E:bp}
        B(z) = \sum_{n=0}^{m} \alpha_n z^n + \sum_{n=1}^{N}
        \sum_{\ell=1}^{m_n} \frac{\beta_{n,\ell}}{(1-\overline{z_n}
        z)^\ell}.
    \end{equation}
    Suppose towards a contradiction that all of the residues of $B$ are zero;
    that is $\beta_{1,1} = \beta_{2,1}= \cdots =\beta_{N,1}=0$.  By integration,
    \begin{equation} \label{E:rpp}
        \mathfrak{B}(z) = \sum_{n=0}^{m} \frac{\alpha_n}{n+1} z^{n+1} +
        \alpha  + \sum_{n=1}^{N} \sum_{\ell=2}^{m_n} 
        \frac{(\frac{\beta_{n,\ell}}{\overline{z_n} (\ell-1)})}{
        (1-\overline{z_n}  z)^{\ell-1}}
    \end{equation}
    is a primitive of $B$ on $\C \backslash \{ 1/\overline{z_1},
    \ldots, 1/\overline{z_N}\}$. The constant $\alpha$ is arbitrary
    and we choose it so that $\mathfrak{B}(0)=0$. 
    
    Since $B$ has a zero of order $m$ at the origin, $\mathfrak{B}(0)=0$, and
    $\mathfrak{B}'=B$, we conclude that $\mathfrak{B}$ has a zero of order $m+1$ at the
    origin. Taking the common denominator in
    \eqref{E:rpp}, we see that
    \begin{equation} \label{E:rp}
        \mathfrak{B}(z) = \frac{z^{m+1} P(z)}{
        \prod_{n=1}^{N}(1-\overline{z_n}  z)^{m_n-1}},
    \end{equation}
    in which  $P$ is a polynomial of degree at most $\sum_{n=1}^{N}(m_n-1)$.
    On the other hand,  \eqref{E:bp} and
    \eqref{E:rpp} imply that
    \begin{equation} \label{E:flim}
        \lim_{z \to \infty} \frac{(m+1) \mathfrak{B}(z)}{z B(z)} = 1.
    \end{equation}
    Define 
    \begin{equation*}
        f(z)=\frac{(m+1) \mathfrak{B}(z)}{z  B(z)}
    \end{equation*}
    and use \eqref{E:blp} and \eqref{E:rp} to obtain 
    \begin{equation*}
        f(z) = \frac{(m+1)P(z) \prod_{n=1}^{N}(1-\overline{z_n} 
        z)}{\prod_{n=1}^{N}(z-z_n)^{m_n}},
    \end{equation*}
    which reveals that $f$ is analytic on $\C \backslash \D$.  Since $B$
    has finite poles, $f$ has at least \emph{one zero} in $\C
    \backslash \D^- $. This enables us to produce a contradiction as follows.
    By \eqref{E:flim}, we know that
    \begin{equation*}
        \lim_{z \to \infty} f(z) = 1,
    \end{equation*}
    and by \eqref{sdfsdkjf}, we have
    \begin{equation*}
        |f(e^{i\theta})| = \bigg|\frac{(m+1) 
        \mathfrak{B}(e^{i\theta})}{e^{i\theta} B(e^{i\theta})} \bigg| =
        \big| (m+1) \mathfrak{B}(e^{i\theta}) \big| \leqslant 1
    \end{equation*}
    for each $e^{i\theta} \in \T$. 
    Since $f$ is analytic on $\C \backslash \D$, $|f(\zeta)|
    \leqslant 1$ for $\zeta \in \T$, and $\lim_{z \to \infty}
    f(z) =1$, the Maximum Principle ensures that $f \equiv 1$. 
    This contradicts the assumption that $f$ has zeros
    in $\C \backslash \D$.  Thus, for some $n \in \{1,2,\ldots, N\}$,
    we must have $\beta_{n,1}\neq 0$.
\end{proof}

\section{Localization of zeros}

If $B$ is a Blaschke product of order $n$, then for each $w \in \T$ the equation $B(z)=w$ has exactly $n$ distinct solutions on $\T$;
see Corollary \ref{T:solution-n-11}.  We explore the relation between the zeros of $B$ and the solutions to $B(z)=w$ in a more general setting. We begin with an old theorem of Gauss and Lucas \cite{MR0225972}. 

\begin{Theorem}[Gauss-Lucas] \label{L:gauss-lucas}
    Let $z_1,z_2,\ldots,z_n \in \C$ be distinct and let $c_1,c_2,\ldots,c_n>0$.  Then
    \begin{equation} \label{E:def-function-f}
        f(z) = \frac{c_1}{z-z_1} +\frac{c_2}{z-z_2} + \cdots + \frac{c_n}{z-z_n}
    \end{equation}
    has $n-1$ zeros and they are in the convex hull of the set $\{z_1,z_2,\ldots,z_n\}$.
\end{Theorem}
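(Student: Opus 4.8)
The plan is to treat the two assertions of the statement separately: first that $f$ has exactly $n-1$ zeros, and then that each such zero lies in the convex hull of $\{z_1, z_2, \ldots, z_n\}$.

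For the zero count, I would clear denominators. Setting $Q(z) = \prod_{j=1}^n (z - z_j)$, we have $f = P/Q$ with
\[
    P(z) = \sum_{k=1}^n c_k \prod_{j \neq k}(z - z_j).
\]
This $P$ is a polynomial of degree at most $n-1$, and its coefficient of $z^{n-1}$ is $\sum_{k} c_k > 0$; hence $\deg P = n - 1$ exactly, so $P$ has $n-1$ zeros counted with multiplicity. Moreover $P(z_j) = c_j \prod_{i \neq j}(z_j - z_i) \neq 0$, since the $z_j$ are distinct and the $c_j$ are positive, so no root of $P$ coincides with a pole of $f$. Therefore the zeros of $f$ are precisely the $n-1$ zeros of $P$.

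For the localization, the key idea is a conjugation trick. Let $w$ be any zero of $f$; then $w \neq z_k$ for every $k$, so
\[
    0 = \sum_{k=1}^n \frac{c_k}{w - z_k} = \sum_{k=1}^n \frac{c_k (\overline{w} - \overline{z_k})}{|w - z_k|^2}.
\]
Taking complex conjugates of this equation (which still equals $0$, since the $c_k$ and the $|w - z_k|^2$ are real) gives
\[
    0 = \sum_{k=1}^n \frac{c_k (w - z_k)}{|w - z_k|^2} = \sum_{k=1}^n \lambda_k (w - z_k), \qquad \lambda_k := \frac{c_k}{|w - z_k|^2} > 0.
\]
Solving for $w$ yields
\[
    w = \frac{\sum_{k=1}^n \lambda_k z_k}{\sum_{k=1}^n \lambda_k},
\]
which exhibits $w$ as a convex combination of $z_1, z_2, \ldots, z_n$ with strictly positive weights $\lambda_k / \sum_j \lambda_j$ summing to $1$. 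Hence $w$ lies in the convex hull of $\{z_1, z_2, \ldots, z_n\}$.

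There is no serious obstacle here; the only point needing care is the bookkeeping in the zero count, namely confirming both that $\deg P = n-1$ (using positivity of the $c_k$ to rule out cancellation of the leading coefficient) and that $P$ and $Q$ share no roots, so that the zeros of $f$ genuinely coincide with those of $P$. The conceptual heart is the passage from $\sum_k c_k/(w - z_k) = 0$ to $\sum_k \lambda_k (w - z_k) = 0$: conjugating the reciprocals converts the equation into a true convex-combination relation. This is exactly the mechanism behind the classical Gauss--Lucas theorem, recovered by taking $c_k$ to be the multiplicities of the roots $z_k$ of a polynomial $P$, so that $f = P'/P$ and the zeros of $f$ are the zeros of $P'$ away from the $z_k$.
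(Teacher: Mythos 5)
Your proof is correct and follows essentially the same route as the paper: clear denominators to count the zeros, then use the conjugation identity $1/(w-z_k)=(\overline{w}-\overline{z_k})/|w-z_k|^2$ to rewrite the vanishing condition as a convex combination $w=\sum_k\lambda_k z_k$ with positive weights. If anything, you are slightly more careful than the paper on the counting step, since you verify that the leading coefficient $\sum_k c_k$ of $P$ is nonzero and that $P$ shares no root with $Q$, whereas the paper only asserts ``at most $n-1$ zeros'' at that point.
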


\begin{proof}
    Multiply \eqref{E:def-function-f}
    by $(z-z_1)(z-z_2)\cdots (z-z_n)$ to see that $f$ has at most $n-1$ zeros in $\C$, counted
    according to multiplicity.  If $z_0$ is one of these zeros, then
    \begin{equation*}
        \frac{c_1}{z_0-z_1}+\frac{c_2}{z_0-z_2}+\cdots+\frac{c_n}{z_0-z_n} = 0.
    \end{equation*}
    Since $c_1,c_2,\ldots,c_n \in \R$, the preceding is equivalent to
    \begin{equation*}
        \frac{c_1(z_0-z_1)}{|z_0-z_1|^2}+\frac{c_2(z_0-z_2)}{|z_0-z_1|^2}+\cdots+\frac{c_n(z_0-z_n)}{|z_0-z_n|^2} = 0,
    \end{equation*}
    which can be written as
    \begin{equation*}
        \left( \frac{c_1}{|z_0-z_1|^2}+\cdots+\frac{c_n}{|z_0-z_n|^2} \right) z_0 =
        \frac{c_1}{|z_0-z_1|^2} z_1 +\cdots+\frac{c_n}{|z_0-z_n|^2} z_n.
    \end{equation*}
    Thus,
    \begin{equation*}
        z_0 = \lambda_1 z_1 + \lambda_2 z_2 + \cdots + \lambda_n z_n,
    \end{equation*}
    in which 
    \begin{equation*}
        \lambda_j = \frac{\frac{c_j}{|z_0-z_j|^2}}{\frac{c_1}{|z_0-z_1|^2}+\cdots+\frac{c_n}{|z_0-z_n|^2}}
        \quad \text{for $1 \leq j \leq n$}.
    \end{equation*}
    Since $c_1,c_2,\ldots,c_n > 0$, we see that
    $0<\lambda_1,\lambda_2,\ldots,\lambda_n<1$ and $\lambda_1+\cdots+\lambda_n=1$.
    Consequently, $z_0$ lies in the convex hull of the set $\{z_1,z_2,\ldots,z_n\}$.
\end{proof}

The Gauss-Lucas theorem can be used to prove a beautiful theorem about the location of the zeros of a finite Blaschke product (see \cite{MR1273115, MR1933701}). 
First we need an important lemma.

\begin{Lemma} \label{L:convex-cobm}
    Let $\{z_1,z_2,\ldots,z_{n-1}\} \subset \D$, 
    \begin{equation*}
        B(z) = z \prod_{k=1}^{n-1} \frac{z_k-z}{1-\overline{z_k}  z},
    \end{equation*}
    and $w \in \T$. Let $\zeta_1,\zeta_2,\ldots,\zeta_n$ be
        the $n$ distinct solutions to $B(\zeta)=w$ and define 
    \begin{equation*}
        \lambda_k = \frac{1}{1+\sum_{j=1}^{n-1} \frac{1-|z_j|^2}{|\zeta_k-z_j|^2}}, \quad 1 \leq k \leq n.
    \end{equation*}
    Then $\lambda_1,\lambda_2,\ldots,\lambda_n$ satisfy
    \begin{equation*}
        0<\lambda_1,\lambda_2,\ldots,\lambda_n<1 \quad \text{and} \quad \lambda_1+\lambda_2+\cdots+\lambda_n=1.
    \end{equation*}
    Moreover,
    \begin{equation*}
        \frac{B(z)/z}{B(z)-w} = \frac{(z-z_1)\cdots(z-z_{n-1})}{(z-\zeta_1)\cdots(z-\zeta_n)}= \frac{\lambda_1}{z-\zeta_1}+\cdots+\frac{\lambda_n}{z-\zeta_n}.
    \end{equation*}
\end{Lemma}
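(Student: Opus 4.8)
The plan is to work the chain of equalities from right to left: first identify the rational function $\tfrac{B(z)/z}{B(z)-w}$ with the factored form, then read off the partial-fraction coefficients as residues, and finally match those residues to the stated $\lambda_m$ by evaluating the logarithmic derivative of $B$ on $\T$. I would begin by clearing denominators. Writing
\[
B(z) = \frac{z\prod_{k=1}^{n-1}(z_k-z)}{\prod_{k=1}^{n-1}(1-\overline{z_k}z)},
\]
the equation $B(z)=w$ is equivalent to the degree-$n$ polynomial equation $z\prod_{k=1}^{n-1}(z_k-z)-w\prod_{k=1}^{n-1}(1-\overline{z_k}z)=0$, whose leading coefficient is $(-1)^{n-1}$. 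By Corollary \ref{T:solution-n-11} its roots are precisely the distinct points $\zeta_1,\ldots,\zeta_n\in\T$, so the left side factors as $(-1)^{n-1}\prod_{m=1}^{n}(z-\zeta_m)$. Substituting this into $\tfrac{B(z)/z}{B(z)-w}$ and cancelling the common factor $\prod_k(1-\overline{z_k}z)$ and the sign yields the middle equality
\[
\frac{B(z)/z}{B(z)-w}=\frac{(z-z_1)\cdots(z-z_{n-1})}{(z-\zeta_1)\cdots(z-\zeta_n)}.
\]

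Since the $\zeta_m$ are distinct and the numerator has degree $n-1<n$, this function admits a partial-fraction expansion $\sum_{m=1}^n \lambda_m/(z-\zeta_m)$ with $\lambda_m$ the residue at $\zeta_m$. Rather than differentiate the factored denominator, I would compute the residue from the form $\tfrac{B(z)/z}{B(z)-w}$: because $B(\zeta_m)=w$ and $B'(\zeta_m)\neq 0$ (Lemma \ref{L:nozerob'ont}), the pole is simple and
\[
\lambda_m=\frac{B(\zeta_m)/\zeta_m}{B'(\zeta_m)}=\frac{w/\zeta_m}{B'(\zeta_m)}.
\]

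The key step is then to evaluate $B'(\zeta_m)$. Viewing the origin as one of the $n$ zeros of $B$, the logarithmic-derivative formula \eqref{E:estb'1-0-2-fin} gives, for $\zeta\in\T$,
\[
\frac{\zeta B'(\zeta)}{B(\zeta)}=\frac{1}{|\zeta|^2}+\sum_{k=1}^{n-1}\frac{1-|z_k|^2}{|\zeta-z_k|^2}=1+\sum_{k=1}^{n-1}\frac{1-|z_k|^2}{|\zeta-z_k|^2},
\]
the last equality because $|\zeta|=1$. Hence $B'(\zeta_m)=\tfrac{w}{\zeta_m}\bigl(1+\sum_{k}\tfrac{1-|z_k|^2}{|\zeta_m-z_k|^2}\bigr)$, and inserting this into the residue formula collapses exactly to the stated expression for $\lambda_m$.

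Finally, the two assertions about the $\lambda_m$ are immediate. Each summand $\tfrac{1-|z_k|^2}{|\zeta_m-z_k|^2}$ is positive, so the denominator of $\lambda_m$ exceeds $1$, giving $0<\lambda_m<1$. For the normalization I would let $z\to\infty$ in the partial-fraction identity: the left side satisfies $zR(z)\to 1$ (the numerator and denominator are monic of degrees $n-1$ and $n$), while $z\sum_m \lambda_m/(z-\zeta_m)\to\sum_m\lambda_m$, whence $\sum_m\lambda_m=1$. The only delicate point in the whole argument is the boundary evaluation of the logarithmic derivative—correctly invoking \eqref{E:estb'1-0-2-fin} with the origin counted as a zero and simplifying via $|\zeta_m|=1$; everything else is bookkeeping.
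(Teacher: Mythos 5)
Your proof is correct and follows essentially the same route as the paper: identify $\tfrac{B(z)/z}{B(z)-w}$ with the monic quotient $\prod(z-z_k)/\prod(z-\zeta_m)$, read off the partial-fraction coefficients as residues $\tfrac{B(\zeta_m)/\zeta_m}{B'(\zeta_m)}$, evaluate these via \eqref{E:estb'1-0-2-fin} with the origin counted among the zeros, and get positivity and $\sum\lambda_m=1$ from the summand signs and the limit at infinity. The only cosmetic difference is that you fix the normalizing constant by tracking leading coefficients and signs, where the paper multiplies by $z$ and lets $z\to\infty$.
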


\begin{proof}
    Observe that
    \begin{equation*}
        \frac{B(z)/z}{B(z)-w} = \frac{P(z)}{Q(z)},
    \end{equation*}
    in which $P$ and $Q$ are polynomials with $\deg P = n-1$ and $\deg Q = n$.
    The roots of $P$ are $z_1,z_2,\ldots,z_{n-1}$ and the roots of $Q$ are $\zeta_1,\zeta_2,\ldots,\zeta_n$, so
    \begin{equation*}
        \frac{B(z)/z}{B(z)-w} = C\frac{(z-z_1)\cdots(z-z_{n-1})}{(z-\zeta_1)\cdots(z-\zeta_n)}
    \end{equation*}
    for some $C \neq 0$.  Multiplying through by $z$ and taking the limit as $z \to \infty$ reveals that $C = 1$.
    Performing a partial fraction expansion, we obtain
    \begin{equation*}
        \frac{B(z)/z}{B(z)-w} = \frac{\lambda_1}{z-\zeta_1}+\cdots+\frac{\lambda_n}{z-\zeta_n}.
    \end{equation*}
    Fix $1 \leq j \leq n$, multiply the previous identity by $z-\zeta_j$, then let $z \to \zeta_j$ to see that
    \begin{equation*}
        \lambda_j = \lim_{z \to \zeta_j} \frac{B(z)}{z} \cdot \frac{z-\zeta_j}{B(z)-w} = \frac{B(\zeta_j)}{\zeta_j B'(\zeta_j)}
        = \frac{1}{1+\sum_{k=1}^{n-1} \frac{1-|z_k|^2}{|\zeta_j-z_k|^2}}
    \end{equation*}
    by \eqref{E:estb'1-0-2-fin}.  Thus $0<\lambda_j<1$.  Now let $z \to \infty$ in the identity
    \begin{equation*}
        \frac{B(z)}{B(z)-w} = \frac{\lambda_1 z}{z-\zeta_1}+\cdots+\frac{\lambda_n z}{z-\zeta_n}
    \end{equation*}
    to conclude that
    \begin{equation*}
        \lambda_1+\cdots+\lambda_n = \lim_{z \to \infty} \frac{B(z)}{B(z)-w} = 1. \qedhere
    \end{equation*}
\end{proof}

\begin{Theorem} \label{T:convex-cobm}
    Let $\{z_1,z_2,\ldots,z_{n-1}\} \subset \D$, 
    \begin{equation*}
        B(z) = z \prod_{k=1}^{n-1} \frac{z_k-z}{1-\overline{z_k}  z},
    \end{equation*}
    and $w \in \T$. Let $\zeta_1,\zeta_2,\ldots,\zeta_n$ be
        the $n$ distinct solutions to $B(\zeta)=w$.
        Then $z_1,z_2,\ldots,z_{n-1}$ belong to the convex hull of $\zeta_1,\zeta_2,\ldots,\zeta_n$.
\end{Theorem}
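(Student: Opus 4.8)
The plan is to combine the preceding Lemma~\ref{L:convex-cobm} with the Gauss--Lucas Theorem~\ref{L:gauss-lucas}; once both are in hand the conclusion is almost immediate, so the real content lies in recognizing the correct correspondence between the two statements.

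First I would invoke Corollary~\ref{T:solution-n-11}: since $w \in \T$ and $B$ has order $n$, the equation $B(\zeta)=w$ has exactly $n$ \emph{distinct} solutions $\zeta_1,\ldots,\zeta_n$, all lying on $\T$. This is precisely the hypothesis needed to apply the previous lemma. Lemma~\ref{L:convex-cobm} then supplies the partial fraction identity
\[
    f(z) := \frac{B(z)/z}{B(z)-w}
    = \frac{(z-z_1)\cdots(z-z_{n-1})}{(z-\zeta_1)\cdots(z-\zeta_n)}
    = \frac{\lambda_1}{z-\zeta_1}+\cdots+\frac{\lambda_n}{z-\zeta_n},
\]
where $0<\lambda_k<1$ and $\lambda_1+\cdots+\lambda_n=1$. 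The crucial observation is that the middle expression displays the zeros of $f$ explicitly.

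Next I would read off those zeros. Since each $z_j$ lies in $\D$ while each $\zeta_k$ lies on $\T$, we have $\{z_1,\ldots,z_{n-1}\}\cap\{\zeta_1,\ldots,\zeta_n\}=\varnothing$, so the numerator and denominator of $f$ share no common factor and no cancellation occurs; hence the zeros of $f$ are exactly $z_1,\ldots,z_{n-1}$, counted with multiplicity. (It is worth noting that the factor $z$ in $B$ is divided out in forming $B(z)/z$, which is why the origin does not appear among the zeros of $f$ and the theorem concerns only $z_1,\ldots,z_{n-1}$.) I would then apply Theorem~\ref{L:gauss-lucas} to the rightmost expression, with the distinct nodes $\zeta_1,\ldots,\zeta_n$ in the role of the points and the positive weights $\lambda_k$ in the role of the coefficients $c_k$. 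Gauss--Lucas guarantees that the $n-1$ zeros of $f$ lie in the convex hull of $\{\zeta_1,\ldots,\zeta_n\}$, and since those zeros are exactly $z_1,\ldots,z_{n-1}$, the conclusion follows.

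I do not anticipate a genuine obstacle, as the two cited results do all the heavy lifting. The only point demanding care is the bookkeeping that identifies the zeros of $f$ with the $z_j$: one must confirm that the numerator and denominator share no root (forced by $\D\cap\T=\varnothing$) and recall that Gauss--Lucas counts zeros with multiplicity, so the argument is unaffected should some of the $z_j$ coincide.
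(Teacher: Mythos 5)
Your proposal is correct and follows exactly the paper's argument: apply Lemma~\ref{L:convex-cobm} to obtain the partial-fraction representation with positive weights summing to one, observe that the zeros of the quotient are precisely $z_1,\ldots,z_{n-1}$, and conclude via the Gauss--Lucas Theorem~\ref{L:gauss-lucas}. The extra bookkeeping you supply (no cancellation since $\D\cap\T=\varnothing$, distinctness of the $\zeta_k$ from Corollary~\ref{T:solution-n-11}) is a welcome elaboration of details the paper leaves implicit.
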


\begin{proof}
    Lemma \ref{L:convex-cobm} yields the representation
    \begin{equation*}
        \frac{B(z)/z}{B(z)-w} = \frac{\lambda_1}{z-\zeta_1}+\cdots+\frac{\lambda_n}{z-\zeta_n},
    \end{equation*}
    in which the right-hand side is a convex combination of the functions 
    \begin{equation*}
    \frac{1}{z - \zeta_1},\quad \frac{1}{z - \zeta_2},\ldots,\quad \frac{1}{z - \zeta_n}.
    \end{equation*}
    Since the zeros of the quotient are precisely $z_1,z_2,\ldots,z_{n-1}$,
    Theorem \ref{L:gauss-lucas} ensures that they are in the convex hull of $\zeta_1,\zeta_2,\ldots,\zeta_n$.
\end{proof}

Figure \ref{CH4} illustrates Theorem \ref{T:convex-cobm} for a Blaschke product of order $n=5$.

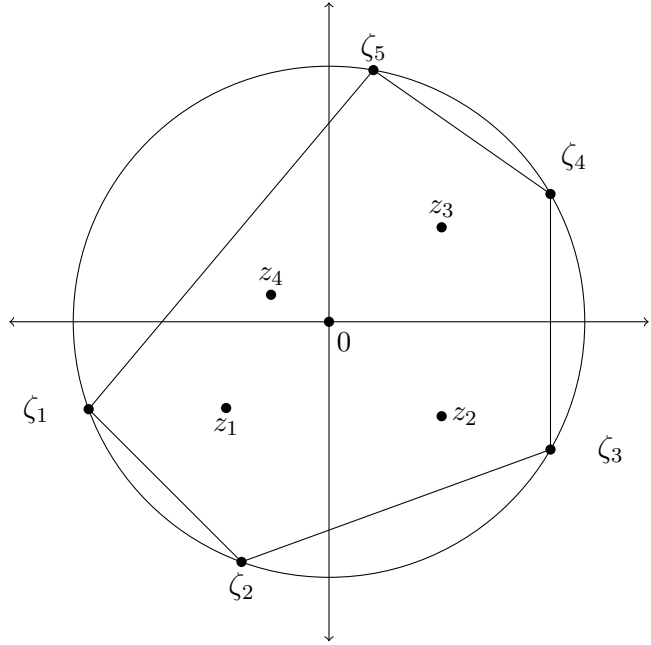
\begin{figure}
    \centering
    \begin{tikzpicture}[scale=.85]
    
        \draw(30:4) --(80:4)--(200:4)--(250:4)--(330:4)--cycle;
        \draw(0,0) circle (4);
        \fill (30:4)node[yshift=.5cm,right]{\large$\zeta_4$} circle (.08cm);
        \fill (80:4)node[yshift=.3cm]{\large$\zeta_5$} circle (.08cm);
        \fill (200:4)node[xshift=-.7cm]{\large$\zeta_1$} circle (.08cm);
        \fill (250:4)node[below]{\large$\zeta_2$} circle (.08cm);
        \fill (330:4)node[xshift=.8cm]{\large$\zeta_3$} circle (.08cm);

        \fill (40:2.3)node[above]{$z_3$} circle (.08cm);
        \fill (0,0)node[below,xshift=.2cm]{0} circle (.08cm);
        \fill (155:1)node[above]{$z_4$} circle (.08cm);
        \fill (220:2.1)node[below]{$z_1$} circle (.08cm);
        \fill (320:2.3)node[right]{$z_2$} circle (.08cm);

        \draw[<->](-5,0)--(5,0);
        \draw[<->](0,-5)--(0,5);

    \end{tikzpicture}
     \caption{The convex hull of $\zeta_1,\zeta_2,\zeta_3,\zeta_4,\zeta_5$ contain $0,z_1,z_2,z_3,z_4$.}
        \label{CH4}
\end{figure}

%\begin{figure}[ht]\centering
%    \includegraphics[width=0.7\linewidth]{fig-convex-hull-zeta}
%        \caption{The convex hull of $\zeta_1,\zeta_2,\zeta_3,\zeta_4,\zeta_5$ contain $0,z_1,z_2,z_3,z_4$.}
%        \label{CH4}
%\end{figure}

\begin{Corollary} \label{L:convex-cobm-version2}
    Let $\{z_1,z_2,\ldots,z_{n-1}\} \subset \D$, 
    \begin{equation*}
        B(z) = z \prod_{k=1}^{n-1} \frac{z_k-z}{1-\overline{z_k}  z},
    \end{equation*}
    and $w \in \T$. Let $\zeta_1,\zeta_2,\ldots,\zeta_n$ be
        the $n$ distinct solutions to $B(\zeta)=w$ and define 
    \begin{equation*}
        \lambda_k = \frac{1}{1+\sum_{j=1}^{n-1} \frac{1-|z_j|^2}{|\zeta_k-z_j|^2}}, \quad \text{for $1 \leq k \leq n$}.
    \end{equation*}
    Then 
    \begin{equation*}
        0<\lambda_1,\ldots,\lambda_n<1 \quad \text{and} \quad \lambda_1+\cdots+\lambda_n=1,
    \end{equation*}
    and
    \begin{equation}\label{eq:WTSTWO}
        \frac{1}{1-\overline{w} B(z)} = \frac{\lambda_1}{1-\overline{\zeta}_1 z}+\cdots+\frac{\lambda_n}{1- \overline{\zeta}_n z}.
    \end{equation}
\end{Corollary}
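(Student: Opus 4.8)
The assertion about $\lambda_1,\ldots,\lambda_n$ (positivity, bound by $1$, and summing to $1$) is word-for-word the corresponding conclusion of Lemma~\ref{L:convex-cobm}, so it requires nothing new; only \eqref{eq:WTSTWO} demands an argument. The plan is to extract \eqref{eq:WTSTWO} from the partial fraction identity
\begin{equation*}
    \frac{B(z)/z}{B(z)-w} = \frac{\lambda_1}{z-\zeta_1}+\cdots+\frac{\lambda_n}{z-\zeta_n}
\end{equation*}
already proved in Lemma~\ref{L:convex-cobm}, using the reflection symmetry \eqref{E:bles1up4} of $B$ rather than recomputing residues from scratch. The point is that the same coefficients $\lambda_k$ should appear, so I want a transformation of the identity itself.

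Concretely, I would substitute $z \mapsto 1/\overline{z}$ into the displayed identity and then conjugate both sides. On the left, \eqref{E:bles1up4} in the form $\overline{B(1/\overline{z})} = 1/B(z)$ collapses the expression: a short computation rewrites $\overline{\big(B(1/\overline{z})\,\overline{z}\,/\,(B(1/\overline{z})-w)\big)}$ as $z/(1-\overline{w}B(z))$. On the right, each $\lambda_k$ is real --- this is precisely where the hypothesis $w\in\T$ enters, via Corollary~\ref{T:solution-n-11} and the closed form for $\lambda_k$ in Lemma~\ref{L:convex-cobm} --- so conjugation only sends $\zeta_k\mapsto\overline{\zeta_k}$, and clearing the inner fraction yields $\sum_k \lambda_k z/(1-\overline{\zeta_k}z)$. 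Equating the two sides gives
\begin{equation*}
    \frac{z}{1-\overline{w}B(z)} = \sum_{k=1}^{n} \frac{\lambda_k z}{1-\overline{\zeta_k} z},
\end{equation*}
and cancelling the common factor $z$ produces \eqref{eq:WTSTWO}.

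The only real care needed is the bookkeeping of conjugates through the substitution and the check that \eqref{E:bles1up4} is being applied at the point $1/\overline{z}$; I would therefore write out the left-hand simplification explicitly, since that is where a sign or a reciprocal could slip. As a cross-check, and an independent self-contained route, I would compute the partial fraction expansion of $h(z)=1/(1-\overline{w}B(z))$ directly: its poles are the simple zeros $\zeta_k$ of $1-\overline{w}B$, it vanishes at $\infty$ because $B(z)\to\infty$ there, and its residue at $\zeta_k$ is $-w/B'(\zeta_k)$. Combining $\lambda_k = B(\zeta_k)/(\zeta_k B'(\zeta_k)) = w/(\zeta_k B'(\zeta_k))$ from Lemma~\ref{L:convex-cobm} with $|\zeta_k|=1$, this residue equals $-\lambda_k\zeta_k$, and summing the principal parts again recovers \eqref{eq:WTSTWO}. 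I expect the reflection argument to be the shorter of the two, with the residue computation serving mainly as confirmation.
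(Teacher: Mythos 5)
Your proposal is correct and follows essentially the same route as the paper: both derive \eqref{eq:WTSTWO} from the partial fraction identity of Lemma~\ref{L:convex-cobm} by exploiting the reflection symmetry of $B$ and the realness of the $\lambda_k$ under conjugation (the paper works on $\T$ using $z\overline{z}=1$ and $B(z)\overline{B(z)}=1$ and then extends by meromorphy, which restricted to $\T$ is exactly your substitution $z\mapsto 1/\overline{z}$ followed by conjugation). Your global use of \eqref{E:bles1up4} is a slightly cleaner packaging of the same manipulation, and your residue computation is a valid independent confirmation.
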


\begin{proof}
    Lemma \ref{L:convex-cobm} provides the convex combination
    \begin{equation*}
        \frac{B(z)/z}{B(z)-w} = \frac{\lambda_1}{z-\zeta_1}+\cdots+\frac{\lambda_n}{z-\zeta_n},
    \end{equation*}
    valid for $z \in \T$ if properly interpreted at poles.  
    For such points, $z \overline{z} = 1$ and $B(z) \overline{B(z)} = 1$, so
    \begin{equation*}
        \frac{1}{1-w \overline{B(z)}} = \frac{\lambda_1}{1-\zeta_1\overline{z}}+\cdots+\frac{\lambda_n}{1-\zeta_n\overline{z}}
    \end{equation*}
    on $\T$.  Conjugating both sides yields \eqref{eq:WTSTWO} for $z \in \T$.
    However, both sides of \eqref{eq:WTSTWO} are meromorphic on $\C$, so the identity holds everywhere.
\end{proof}

For a Blaschke product of the form
\begin{equation*}
    \qquad B(z) = z \left( \frac{\alpha-z}{1-\overline{\alpha} z} \right),\qquad \alpha\neq 0,
\end{equation*}
Theorem \ref{T:convex-cobm} has an appealing geometric interpretation. A line that passes through $\alpha$ intersects $\T$ in two points, say 
$\zeta_1$ and $\zeta_2$.   Theorem \ref{T:convex-cobm} ensures that $B(\zeta_1) = B(\zeta_2)$.
On the other hand, if $\zeta_1,\zeta_2 \in \T$ and $B(\zeta_1) = B(\zeta_2)$, then $\alpha$ lies on the line 
segment connecting $\zeta_1$ and $\zeta_2$; see Figure \ref{TP}.

\begin{figure}
    \centering
    \begin{tikzpicture}[scale=1.0]
        
        \draw (0,0) circle (3cm);
        \fill (190:3)node[left]{$\zeta_1$} circle (.04cm);
        \fill (50:3)node[right,yshift=.2cm]{$\zeta_2$} circle (.04cm);
        \fill (0,0)node[below]{0} circle (.04cm);

        \draw(190:3)--(50:3);
        \fill (.5,1.47347)node[above]{$\alpha$} circle (.04cm);

    \end{tikzpicture}
    \caption{Two points with the same image under $B$.}
    \label{TP}
\end{figure}
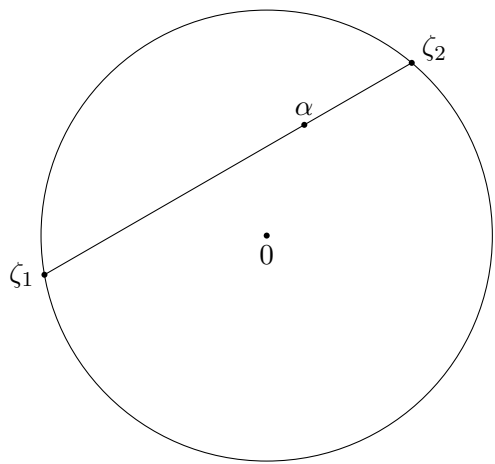

%\begin{figure}[ht]\centering
%    \includegraphics[width=0.5\linewidth]{fig-B2-line-alpha}
%        \caption{Two points with the same image under $B$.}
%        \label{TP}
%\end{figure}

In Theorem \ref{T:convex-cobm}, we are free to choose any $w \in \T$ and 
then obtain the solutions $\zeta_{1,w},\zeta_{2,w},\ldots,\zeta_{n,w}$ of $B(\zeta)=w$. Thus, 
\begin{equation*}
    \{z_1,z_2,\ldots,z_n\} \subseteq  \bigcap_{w \in \T} \operatorname{conv}(\zeta_{1,w},\zeta_{2,w},\ldots,\zeta_{n,w}),
\end{equation*}
the intersection of the convex hulls of the sets $\{\zeta_{1,w},\zeta_{2,w},\ldots,\zeta_{n,w}\}$.
For a Blaschke product of order three, this phenomenon is depicted in Figure \ref{ELPS}. We refer the reader to \cite{MR1933701} for more on ellipses and finite Blaschke products. 

\begin{figure}
    \centering
    \begin{tikzpicture}[rotate=45, scale=0.70]
        \clip(.5,.5) circle (5.02cm);
        \draw (.5,.5) circle (5cm);

        \draw (0,0) ellipse (2cm and 4cm);
        \fill (.5,.5)node[below]{0} circle (0.08cm);
        \draw (90:4)--(-90:4);
        \fill (90:3)node[below] {$z_1$} circle (0.08cm);
        \fill (-90:3)node[below] {$z_2$} circle (0.08cm);
        
        \foreach \m in {-1.99,-1.91,...,1.99}{
            %now calculate both n and m
            \pgfmathsetmacro{\none}{2*sqrt(4-\m*\m)}
            \pgfmathsetmacro{\ntwo}{-2*sqrt(4-\m*\m)}
            \foreach \n in {\none,\ntwo}{
                \pgfmathsetmacro{\s}{-4*\m/(\n)}
                 \draw[domain=-6:6] plot ({\x},{\s*(\x-\m)+\n});
           }
        }
    \end{tikzpicture}
    \caption{The intersection of all $\mbox{co}(\zeta_{1,w},\zeta_{2,w},\zeta_{3,w})$.}
    \label{ELPS}
\end{figure}
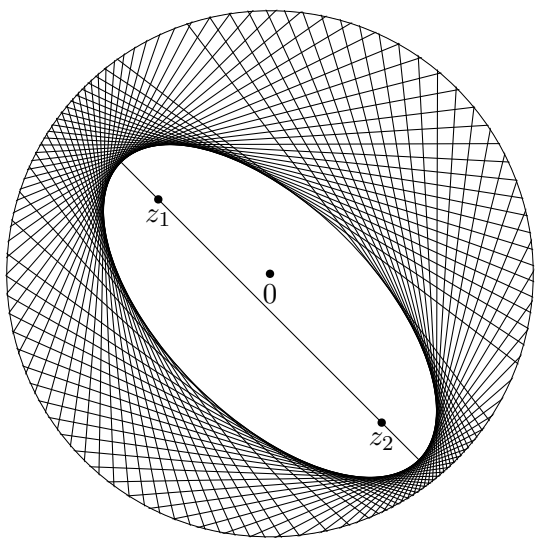

%\begin{figure}[ht]\centering
%    \includegraphics[width=0.5\linewidth]{fig-B3-ellipse}
%        \caption{The intersection of all $\operatorname{conv}(\zeta_{1,w},\zeta_{2,w},\zeta_{3,w})$.}
%        \label{ELPS}
%\end{figure}

\section{The group of invariants}

For each $w \in \T$, Corollary \ref{T:solution-n-11} says that the equation 
$B(z) = w$ has exactly $n$ distinct solutions on $\T$. Thus, the sets
$B^{-1}(\{w\})$ for $w \in \T$
form a partition of $\T$; each such set has exactly $n$ elements.
In particular, consider
\begin{equation*}
    B^{-1}(1) = \{ e^{i\vartheta_1}, e^{i\vartheta_2}, \ldots, e^{i\vartheta_n}\},
\end{equation*}
in which the arguments are arranged so that
\begin{equation*}
    0 \leq \vartheta_1 < \vartheta_2 < \cdots <\vartheta_n < 2\pi.
\end{equation*}
We define $\vartheta_k$ for $k \in \Z$ by 
\begin{equation*}
    \vartheta_k = \vartheta_\ell  \pmod{2 \pi} \quad\iff\quad  k=\ell \pmod{n}.
\end{equation*}

As $\zeta$ moves once counterclockwise on $\T$, the image $B(\zeta)$ traverses
the unit circle $n$ times. This follows from the argument principle, or more explicitly, from 
Theorem \ref{L:angleont}.  As $\zeta$ passes from $e^{i\vartheta_k}$ to $e^{i\vartheta_{k+1}}$, 
the image $B(\zeta)$ makes a complete traversal of $\T$. Thus, $B$ bijectively maps  each of the arcs
\begin{equation*}
[e^{i\vartheta_1}, e^{i\vartheta_2}), \quad
[e^{i\vartheta_2}, e^{i\vartheta_3}),  \ldots, \quad
[e^{i\vartheta_n}, e^{i\vartheta_{n+1}}),
\end{equation*}
onto $\T$. If we define the equivalence relation
\begin{equation*}
e^{i\theta} \sim e^{i\theta'} \quad\iff\quad B(e^{i\theta}) = B(e^{i\theta'}),
\end{equation*}
then
\begin{equation} \label{E:t-conjug-b}
\{B^{-1}(w) : w \in \T \}
\end{equation}
is the family of equivalence classes of $\sim$; each of the arcs above contains exactly one element from each
equivalence class. 

For each $k \in \Z$, we define the bijective continuous function
\begin{equation}\label{weoirujdkn}
    \Phi_k : [e^{i\vartheta_k}, e^{i\vartheta_{k+1}})  \to \T, \qquad \Phi_{k}(e^{i\theta}) = B(e^{i\theta}).
\end{equation}
However, there are only $n$ distinct functions that arise since we have
\begin{equation*}
    \Phi_k = \Phi_\ell  \quad\iff\quad k=\ell \quad \pmod{n}.
\end{equation*}
According to the definition of the $\vartheta_k$, we see that
\begin{equation} \label{E:limiphiatends}
    \lim_{\substack{\theta \to \vartheta_k\\ \theta>\vartheta_k}} \Phi(e^{i\theta}) 
    = \lim_{\substack{\theta \to \vartheta_{k+1}\\ \theta<\vartheta_{k+1}}} \Phi(e^{i\theta}) = 1
\end{equation}
and
\begin{equation} \label{E:limiphiatends-2}
B \big( \Phi_k^{-1}(e^{i\theta})\big) = e^{i\theta}
\end{equation}
for all $e^{i\theta} \in \T$.

The following result shows that elements of the conjugacy classes \eqref{E:t-conjug-b} have a minimal lower distance from each other.

\begin{Lemma} \label{L:b-fint-distinct}
    If $B$ is a finite Blaschke product, then there is $\delta>0$ so that
    \begin{equation*}
       0 <  |e^{i\theta}-e^{i\vartheta}| < \delta \quad\implies\quad B(e^{i\theta}) \neq B(e^{i\vartheta}).
    \end{equation*}
\end{Lemma}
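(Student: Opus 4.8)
The plan is to show that the boundary map $\theta \mapsto B(e^{i\theta})$ winds around $\T$ at a strictly positive, continuous speed, so that $B$ is locally injective on $\T$, and then to use the compactness of $\T$ to upgrade this local injectivity into a single separation constant $\delta$. First I would fix a continuous branch $\psi(\theta)$ of $\arg B(e^{i\theta})$ on $\R$, so that $B(e^{i\theta}) = e^{i\psi(\theta)}$. Differentiating this relation and comparing with $\frac{d}{d\theta}B(e^{i\theta}) = i e^{i\theta} B'(e^{i\theta})$ gives
\begin{equation*}
    \psi'(\theta) = \frac{e^{i\theta} B'(e^{i\theta})}{B(e^{i\theta})} = \sum_{k=1}^{n} \frac{1-|z_k|^2}{|e^{i\theta}-z_k|^2} = |B'(e^{i\theta})|,
\end{equation*}
where the middle equality is \eqref{E:estb'1-0-2-fin}. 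By Lemma \ref{L:nozerob'ont} this quantity is strictly positive for every $\theta$, so $\psi$ is a continuous, strictly increasing function; consequently $B(e^{i\theta}) = B(e^{i\vartheta})$ forces $\psi(\theta) - \psi(\vartheta) \in 2\pi\Z$.

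Next I would argue by contradiction. If no $\delta$ as claimed existed, there would be distinct pairs $e^{i\theta_m} \neq e^{i\vartheta_m}$ with $|e^{i\theta_m} - e^{i\vartheta_m}| \to 0$ and $B(e^{i\theta_m}) = B(e^{i\vartheta_m})$ for all $m$. Passing to a subsequence, compactness of $\T$ gives $e^{i\theta_m} \to e^{i\theta_0}$, and since the two sequences have vanishing difference, $e^{i\vartheta_m} \to e^{i\theta_0}$ as well. Because $\psi' = |B'|$ is continuous and positive, $\psi$ is strictly increasing and increases by less than $2\pi$ across a sufficiently short closed arc $I$ about $e^{i\theta_0}$; on such an arc the map $\theta \mapsto e^{i\psi(\theta)} = B(e^{i\theta})$ is therefore injective. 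For $m$ large, both $e^{i\theta_m}$ and $e^{i\vartheta_m}$ lie in $I$ and are distinct, yet share the same image, contradicting injectivity on $I$. Hence the desired $\delta$ exists.

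I expect the only real content to be the identity $\psi' = |B'|$, which turns the non-vanishing of $B'$ on $\T$ (Lemma \ref{L:nozerob'ont}) into strict monotonicity of the argument and hence local injectivity; the remaining globalization is a routine compactness argument. If one prefers an explicit constant, the same monotonicity together with the uniform bound $M = \max_{\theta} |B'(e^{i\theta})| < \infty$ (finite since $\T$ is compact) shows that any two distinct points with a common image are separated by arc length at least $2\pi/M$, so one may take $\delta = 2\sin\!\big(\tfrac{1}{2}\min(2\pi/M,\pi)\big)$, the degenerate case $M \leq 2$ corresponding to there being no coincidences at all.
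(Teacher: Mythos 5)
Your proof is correct and follows essentially the same route as the paper: both argue by contradiction, use compactness to produce distinct coincident points converging to a common limit $e^{i\theta_0}$, and derive a contradiction from local injectivity of $B$ near $e^{i\theta_0}$, which ultimately rests on Lemma \ref{L:nozerob'ont}. The only differences are minor: the paper gets local injectivity directly from the nonvanishing of $B'$ at $e^{i\theta_0}$, whereas you derive it from strict monotonicity of the lifted argument $\psi$ via \eqref{E:estb'1-0-2-fin}; and your explicit constant $\delta = 2\sin\!\big(\tfrac{1}{2}\min(2\pi/M,\pi)\big)$ is a correct quantitative bonus the paper does not provide --- though your parenthetical that $M \leq 2$ means ``no coincidences'' is slightly off, since $B(z)=z^2$ has $M=2$ and identifies antipodal points (at chord distance exactly $2$, so your $\delta$ still works).
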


\begin{proof}
    Suppose towards a contradiction that for each 
    $n \geq 1$, there are points $e^{i\theta_n}, e^{i\vartheta_n} \in \T$ such that 
    \begin{equation*}
        |e^{i\theta_n}-e^{i\vartheta_n}| < \frac{1}{n}
        \quad \text{and} \quad
        B(e^{i\theta_n}) = B(e^{i\vartheta_n}).
    \end{equation*}
    By passing to a subsequence, we may assume that there is a $\theta_0$ such that
    \begin{equation*}
        \lim_{n \to \infty} e^{i\theta_n} = \lim_{n\to\infty} e^{i\vartheta_n} = e^{i\theta_0}.
    \end{equation*}
    However, Lemma \ref{L:nozerob'ont} ensures that $B$ is injective on 
    a small neighborhood of $e^{i\theta_0}$. This is a contradiction.
\end{proof}

Let $\mathcal{C}$ be the space of all continuous functions $u: \T \to \T$. This space, 
when endowed with the binary operation of function composition, is a semigroup. That is,
\begin{enumerate}
    \item $u_1, u_2 \in \mathcal{C} \implies u_1 \circ u_2 \in \mathcal{C}$;
    
    \item $(u_1 \circ u_2) \circ u_3 = u_1 \circ (u_2 \circ u_3)$ for all $u_1, u_2, u_3 \in \mathcal{C}$;
    
    \item $id \in \mathcal{C}$ (and hence $u \circ id = id \circ u = u$ for each $u \in \mathcal{C}$).
\end{enumerate}
However, an arbitrary element of $\mathcal{C}$ need not be invertible under function composition. For example, take $u(z)=z^2$.

If $B$ is a finite Blaschke product, then we may regard it as an element of $\mathcal{C}$ and define
\begin{equation*}
    G_B = \{ u \in \mathcal{C} : B \circ u = B\}.
\end{equation*}
This is a sub-semigroup of $\mathcal{C}$. In fact, much more is true.

\begin{Theorem}[Cassier--Chalendar \cite{MR1788126}] \label{T:isabelle-1}
Let $B$ be a finite Blaschke product of order $n$. Then $G_B$ is a cyclic group of order $n$.
\end{Theorem}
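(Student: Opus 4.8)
The plan is to exhibit an explicit generator for $G_B$ and then to show that every element of $G_B$ is one of its powers. Using the maps $\Phi_k$ from \eqref{weoirujdkn}, I would define a candidate generator $\sigma \colon \T \to \T$ by the rule that, if $e^{i\theta}$ lies in the arc $[e^{i\vartheta_k}, e^{i\vartheta_{k+1}})$, then
\begin{equation*}
    \sigma(e^{i\theta}) = \Phi_{k+1}^{-1}\big( \Phi_k(e^{i\theta}) \big) = \Phi_{k+1}^{-1}\big( B(e^{i\theta}) \big).
\end{equation*}
In words, $\sigma$ sends each boundary point to the unique point of its fiber $B^{-1}(B(e^{i\theta}))$ that lies in the next arc. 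Since $\Phi_{k+1}^{-1}$ takes values in $[e^{i\vartheta_{k+1}}, e^{i\vartheta_{k+2}})$ and $B \circ \Phi_{k+1}^{-1} = \operatorname{id}$ by \eqref{E:limiphiatends-2}, the relation $B \circ \sigma = B$ is immediate, so $\sigma \in G_B$ once we know $\sigma \in \CC$.

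The delicate point, and the step I expect to be the main obstacle, is the continuity of $\sigma$ at the junction points $e^{i\vartheta_k}$, where the defining formula switches from the branch $\Phi_k$ to $\Phi_{k+1}$. Here one must track not merely that $B(e^{i\theta}) \to 1$ as $e^{i\theta}$ approaches $e^{i\vartheta_{k+1}}$, but from which side of $1$: approaching $e^{i\vartheta_{k+1}}$ from within arc $k$ the value $B(e^{i\theta})$ completes a full counterclockwise loop and returns to $1$ from the clockwise side, whereas approaching from within arc $k+1$ it leaves $1$ counterclockwise. A careful bookkeeping of the two one-sided limits in \eqref{E:limiphiatends} shows that the left limit, the right limit, and the value of $\sigma$ at $e^{i\vartheta_{k+1}}$ all coincide, so $\sigma$ is continuous and hence $\sigma \in G_B$.

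Next I would verify that $\sigma$ generates a cyclic group of order $n$. Iterating the formula gives $\sigma^j(e^{i\theta}) = \Phi_{k+j}^{-1}(B(e^{i\theta}))$ for $e^{i\theta}$ in arc $k$, so $\sigma^n = \operatorname{id}$ because $\Phi_{k+n} = \Phi_k$ and $\Phi_k^{-1}(B(e^{i\theta})) = e^{i\theta}$. Evaluating at $e^{i\vartheta_1}$ yields $\sigma^j(e^{i\vartheta_1}) = e^{i\vartheta_{1+j}}$, which shows that $\operatorname{id}, \sigma, \ldots, \sigma^{n-1}$ are distinct; hence $\langle \sigma \rangle$ is cyclic of order exactly $n$.

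Finally, and this is the heart of the matter, I would show $G_B = \langle \sigma \rangle$. Given $u \in G_B$, the identity $B \circ u = B$ forces $u(z)$ into the fiber $B^{-1}(B(z))$, which consists of the $n$ distinct points $\sigma^0(z), \ldots, \sigma^{n-1}(z)$. Thus there is a well-defined function $h \colon \T \to \{0, 1, \ldots, n-1\}$ with $u(z) = \sigma^{h(z)}(z)$, and the claim is that $h$ is locally constant. Fixing $z_0$, Lemma \ref{L:b-fint-distinct} guarantees that the $n$ fiber points are pairwise separated by a fixed $\delta > 0$, while each $\sigma^j$ is continuous (being a composition of the continuous map $\sigma$). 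For $z$ near $z_0$, continuity of $u$ and of $\sigma^{h(z_0)}$ places both $u(z)$ and $\sigma^{h(z_0)}(z)$ within $\delta/3$ of $\sigma^{h(z_0)}(z_0)$, whereas every other $\sigma^j(z)$ stays more than $\delta/3$ away; since $u(z)$ must equal one of the $\sigma^j(z)$, it equals $\sigma^{h(z_0)}(z)$, so $h(z) = h(z_0)$. As $\T$ is connected, $h$ is constant, whence $u = \sigma^{h} \in \langle \sigma \rangle$. This proves $G_B = \langle \sigma \rangle$ is cyclic of order $n$; in particular it is automatically a group, not merely a sub-semigroup of $\CC$.
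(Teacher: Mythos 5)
Your proposal is correct and follows essentially the same route as the paper: the generator $\sigma$ is exactly the paper's $u_1$ built from the branch maps $\Phi_k$, the powers $\sigma^j$ are the paper's $u_j$, and your ``$h$ is locally constant'' argument is the same combination of Lemma \ref{L:b-fint-distinct}, continuity, and connectedness of $\T$ that the paper uses to show two elements of $G_B$ agreeing at one point must coincide. The only cosmetic difference is that the paper evaluates an arbitrary $u \in G_B$ at $e^{i\vartheta_1}$ to identify which $u_k$ it equals, whereas you package the same rigidity as constancy of the index function $h$.
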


\begin{proof}
Consider the functions $\Phi_k$ defined in \eqref{weoirujdkn}. 
For $k \in \Z$, define functions $u_k : \T \to \T$ by
\begin{equation*}
    u_k: [e^{i\vartheta_j}, e^{i\vartheta_{j+1}}) \to [e^{i\vartheta_{j+k}}, e^{i\vartheta_{j+k+1}}), 
    \qquad u_{k}( e^{i\theta}) =  \Phi_{j+k}^{-1} \big(\Phi_j(e^{i\theta}) \big),
\end{equation*}
for $j \in \Z$. The function $u_k$, with its domain and range as restricted above, 
is a continuous bijection between two conjugacy class.  Moreover, by \eqref{E:limiphiatends},
\begin{equation*}
    u_k(e^{i\vartheta_j}) = e^{i\vartheta_{j+k}}
\quad \text{and}\quad
    \lim_{\substack{\theta \to \vartheta_{j+1}\\ \theta<\vartheta_{j+1}}} u_k(e^{i\theta}) = e^{i\vartheta_{j+k+1}}.
\end{equation*}
Upon gluing these pieces together (for fixed $k$), we obtain a continuous bijection from $\T$ onto $\T$. 
Now observe that \eqref{E:limiphiatends-2} ensures that 
\begin{equation*}
    B \big(u_k(e^{i\theta})\big) = B \big(\Phi_{j+k}^{-1} (\Phi_j(e^{i\theta}))\big) = \Phi_j(e^{i\theta}) = B(e^{i\theta})
\end{equation*}
for each $e^{i\theta} \in \T$. Thus, we obtain $n$ elements of $G_B$.

To further clarify the situation, let us make the following observations:
\begin{enumerate}
    \item $u_0 = id$;
    \item $u_k = u_\ell  \quad\iff\quad k=\ell \pmod{n}$;
    \item $u_k = u_1 \circ u_1 \circ \cdots u_1$ ($k$ times);
    \item $u_k \circ u_\ell = u_{k+\ell}$;
    \item na\"ively speaking, we may say that $u_k$ shifts forward each of the arcs
    \begin{equation*}
        [e^{i\vartheta_1}, e^{i\vartheta_2}), \quad
        [e^{i\vartheta_2}, e^{i\vartheta_3}), \ldots, \quad
        [e^{i\vartheta_n}, e^{i\vartheta_{n+1}}),
    \end{equation*}
    by $k$ steps in such a way that it preserves the equivalence classes of $\sim$. Thus, $u_k(\zeta)=\zeta'$ 
    implies that $\zeta$ and $\zeta'$ belong to the same equivalence class of $\sim$.
\end{enumerate}

The observations above reveal that $\{u_0,u_1,\ldots,u_{n-1}\}$ is a cyclic subgroup of order $n$ in $G_B$. 
We claim that this exhausts $G_B$. This fact is based on the following property: 
if $u,v \in G_B$ are such that $u(e^{i\theta_0}) = v(e^{i\theta_0})$ for some $e^{i\theta_0} \in \T$, then $u=v$. To verify this, let
\begin{equation*}
    E = \{ e^{i\theta} \in \T : u(e^{i\theta}) = v(e^{i\theta}) \}.
\end{equation*}
Clearly $e^{i\theta_0} \in E$. Since $u$ and $v$ are continuous functions, $E$ is a closed subset of $\T$. 
By uniform continuity there is a $\delta'>0$ such that
\begin{equation*}
    |e^{i\theta}-e^{i\theta'}| < \delta' \quad\implies\quad |u(e^{i\theta}) - v(e^{i\theta})| <\delta,
\end{equation*}
where $\delta>0$ is the parameter introduced in Lemma \ref{L:b-fint-distinct}. According to the definition of $G_B$ we have
\begin{equation*}
    B(u(e^{i\theta})) = B(v(e^{i\theta})) = B(e^{i\theta}), \quad e^{i\theta} \in \T.
\end{equation*}
By Lemma \ref{L:b-fint-distinct}, we must have $u(e^{i\theta}) = v(e^{i\theta})$ at least for all $e^{i\theta}$ such that $|e^{i\theta}-e^{i\theta'}| < \delta'$. This reveals that $E$ is also an open set and so $E = \T$.

To finish the proof, let $u \in G_B$. Then
\begin{equation*}
    B(u(e^{i\vartheta_1})) = B(e^{i\vartheta_1}) = 1,
\end{equation*}
and hence
\begin{equation*}
    u(e^{i\vartheta_1}) \in B^{-1}(1) = \{ e^{i\vartheta_1}, e^{i\vartheta_2}, \ldots, e^{i\vartheta_n}\}.
\end{equation*}
Suppose that $u(e^{i\vartheta_1}) = e^{i\vartheta_k}$ for some $1 \leq k \leq n$. 
If we rewrite this identity as $u(e^{i\vartheta_1}) = u_k(e^{i\vartheta_1})$, then the preceding observation shows that $u = u_k$.
\end{proof}

The following fact was stated and verified in the proof of Theorem \ref{T:isabelle-1}.

\begin{Corollary} %\label{C:}
Let $B$ be a finite Blaschke product. Let $u : \T \to \T$ be a continuous function such that $B \circ u = B$. Suppose that there is an $e^{i\theta_0} \in \T$ so that $u(e^{i\theta_0}) = e^{i\theta_0}$. Then $u = id$.
\end{Corollary}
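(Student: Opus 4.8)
The plan is to recognize that this Corollary is precisely the rigidity property already established en route to Theorem~\ref{T:isabelle-1}, specialized to the case $v = \mathrm{id}$. First I would check that the hypotheses place $u$ in the relevant framework: $u$ is continuous with $B \circ u = B$, so $u \in G_B$, and $\mathrm{id} \in G_B$ trivially since $B \circ \mathrm{id} = B$. The fixed-point condition $u(e^{i\theta_0}) = e^{i\theta_0}$ says exactly that $u$ and $\mathrm{id}$ agree at the single point $e^{i\theta_0}$. The claim proved inside Theorem~\ref{T:isabelle-1}—that any two elements of $G_B$ agreeing at one point of $\T$ coincide everywhere—then gives $u = \mathrm{id}$ immediately upon taking $v = \mathrm{id}$.

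To keep the statement self-contained I would reproduce the short connectedness argument rather than merely cite it. Set $E = \{ e^{i\theta} \in \T : u(e^{i\theta}) = e^{i\theta} \}$, the set on which $u$ and $\mathrm{id}$ agree. By hypothesis $e^{i\theta_0} \in E$, so $E \neq \emptyset$, and $E$ is closed because $u$ is continuous. The substance of the proof is the openness of $E$, which is the only place where the structure of finite Blaschke products is used. Here I would invoke Lemma~\ref{L:b-fint-distinct} to fix $\delta > 0$ for which distinct points of $\T$ lying within distance $\delta$ have distinct images under $B$.

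For the openness step, given $e^{i\theta'} \in E$ (so $u(e^{i\theta'}) = e^{i\theta'}$), uniform continuity of $u$ yields a $\delta' > 0$ so that $|e^{i\theta} - e^{i\theta'}| < \delta'$ forces both $u(e^{i\theta})$ and $e^{i\theta}$ to lie within $\delta$ of $e^{i\theta'}$, hence within $\delta$ of each other. Since $B \circ u = B$ gives $B(u(e^{i\theta})) = B(e^{i\theta})$, Lemma~\ref{L:b-fint-distinct} forces $u(e^{i\theta}) = e^{i\theta}$, so the whole $\delta'$-neighborhood lies in $E$ and $E$ is open. Thus $E$ is a nonempty, open, and closed subset of the connected circle $\T$, whence $E = \T$ and $u = \mathrm{id}$. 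I anticipate no genuine obstacle, as the machinery is already in place from Theorem~\ref{T:isabelle-1}; the only point demanding care is to confirm that Lemma~\ref{L:b-fint-distinct} is being applied to the pair $u(e^{i\theta})$ and $e^{i\theta}$, both of which share the $B$-image $B(e^{i\theta})$.
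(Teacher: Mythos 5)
Your proof is correct and follows essentially the same route as the paper, which simply observes that the corollary is the rigidity claim established inside the proof of Theorem~\ref{T:isabelle-1} (two elements of $G_B$ agreeing at one point coincide) specialized to $v=\mathrm{id}$, with openness of $E$ obtained from Lemma~\ref{L:b-fint-distinct} exactly as you do. The only cosmetic point is that ``within $\delta$ of $e^{i\theta'}$'' gives ``within $2\delta$ of each other'' by the triangle inequality, so one should take $\delta/2$-neighborhoods; this is a trivial adjustment and not a gap.
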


\bibliography{FBP_Survey}

\end{document}